\DeclareSymbolFont{cyrletters}{OT2}{wncyr}{m}{n}
\DeclareMathSymbol{\be}{\mathalpha}{cyrletters}{"62}
\DeclareMathSymbol{\Be}{\mathalpha}{cyrletters}{"42}
\DeclareMathSymbol{\Sha}{\mathalpha}{cyrletters}{"58}
\newtheorem{theorem}{Theorem}[subsection] 
\newtheorem{proposition}[theorem]{Proposition} 
\newtheorem{lemma}[theorem]{Lemma} 
\newtheorem{corollary}[theorem]{Corollary} 
\newtheorem{conjecture}[theorem]{Conjecture}
\newtheorem{thm}{Theorem}
\theoremstyle{definition}
\newtheorem{definition}[theorem]{Definition}
\newtheorem{remark}[theorem]{Remark}
\newtheorem{example}[theorem]{Example} 
\newtheorem{notation}[theorem]{Notation}
\DeclareMathOperator{\Hom}{Hom}
\DeclareMathOperator{\supp}{supp}
\DeclareMathOperator{\End}{End}
\DeclareMathOperator{\im}{im}
\newcommand{\textdef}[1]{\textit{\textbf{#1}}}
\newcommand{\colim}{\mathop{\text{\normalfont colim}}}
\newcommand{\N}{{\mathbb N}}
\newcommand{\R}{{\mathbb R}}
\newcommand{\Cat}{\mathcal{A}}
\newcommand{\fac}{\mathsf{sf}}
\newcommand{\Sub}{\mathsf{Sub}}
\newcommand{\vect}{\mathrm{vec}}
\newcommand{\Vect}{\mathrm{Vec}}
\newcommand{\field}{\Bbbk}
\newcommand{\veck}{\vect(\field)}
\newcommand{\Veck}{\Vect(\field)}
\newcommand{\con}{\mathsf{con}}
\newcommand{\Fun}{\mathsf{Fun}}
\renewcommand{\supp}{\mathsf{supp}}
\newcommand{\wS}{\widetilde{S}}
\renewcommand{\L}{\mathcal{L}}
\renewcommand{\S}{\mathcal{S}}
\newcommand{\T}{\mathcal{T}}
\newcommand{\e}{\varepsilon}
\newcommand{\IT}{\mathsf{IT}(\mathbb{R})}
\newcommand{\proj}{\overset{\pi}{\sim}}
\newcommand{\pers}{\uparrow\uparrow}
\newcommand{\seg}{\mathsf{seg}}
\newcommand{\cov}{\mathsf{seg}}
\def\subobneq{\raisebox{-2.5pt}{\mbox{
\begin{tikzpicture}
    \draw[draw opacity =0] (0,0)-- (0,.25) -- (.25,.25) -- (.25,0) -- (0,0);
    \draw (0,.04) -- (.25,.04);
    \draw (.09,0) -- (.17,.08);
    \draw (.25,.3) -- (0,.3) -- (0,.1) -- (.25,.1);
\end{tikzpicture}
}}}
\def\subob{\raisebox{-2.5pt}{\mbox{
\begin{tikzpicture}
    \draw[draw opacity =0] (0,0)-- (0,.25) -- (.25,.25) -- (.25,0) -- (0,0);
    \draw (0,.04) -- (.25,.04);
    \draw (.25,.3) -- (0,.3) -- (0,.1) -- (.25,.1);
\end{tikzpicture}
}}}
\title[Composition series of arbitrary cardinality]{Composition series of arbitrary cardinality in modular lattices and abelian categories}
\author{Eric J.\ Hanson}
\address{(at time of journal submission) Department of Mathematical Sciences, Norwegian University of Science and Technology (NTNU), 7491 Trondheim, NORWAY}
\email{ejhanso3@ncsu.edu}
\author{J.\ Daisie Rock}
\address{Department W16, Ghent University, 9000 Ghent, East Flanders, BELGIUM}
\email{jobdrock@gmail.com}
\date{August 2, 2023}
\subjclass[2020]{Primary: 06C05, 18E10. Secondary: 16G20, 18B35, 55N31.}
\begin{document}

Adv. Math. {\bf 433} (2023), \href{https://doi.org/10.1016/j.aim.2023.109292}{DOI:10.1016/j.aim.2023.109292}.

\bigskip

\maketitle

\vspace{-0.75cm}

\begin{abstract}For a certain family of complete modular lattices, we prove a ``Jordan--H\"older--Schreier-like'' theorem with no assumptions on cardinality or well-orderedness. This family includes both lattices with are both join- and meet-continuous, as well as the lattices of subobjects of any object in an abelian category satisfying properties related to Grothendieck's axioms (AB5) and (AB5$^*$).
We then give several examples of objects in abelian categories which satisfy these axioms, including pointwise finite-dimensional persistence modules, presheaves, and certain Pr\"ufer modules.
Moreover, we show that, over an arbitrary ring, the infinite product of isomorphic simple modules both fails to satisfy our axioms and admits at least two composition series with distinct cardinalities.
We conclude by giving a lattice-theoretic proof that any object which is locally finitely generated and satisfies our axioms can be expressed as a direct sum of indecomposable subobjects. We conjecture that this decomposition is unique. \end{abstract}

\tableofcontents

\vspace{-0.75cm}

\section{Introduction}

The Jordan--H\"older Theorem (sometimes called the Jordan--H\"older--Schreier Theorem) remains one of the foundational results in the theory of modules. More generally, abelian length categories (in which the Jordan--H\"older Theorem holds for every object) date back to Gabriel \cite{length} and remain an important object of study to this day. See e.g. \cite{krause, Infinite Length Cats,LL}. 
The importance of the Jordan--H\"older Theorem in the study of groups, modules, and abelian categories has also motivated a large volume work devoted to establishing when a ``Jordan--H\"older-like theorem'' will hold in different contexts. Indeed, it was already observed in Dedekind's classical work~\cite{dedekind} that any two maximal chains in a finite (semi)modular lattice will have the same length. More recently, Gr\"atzer and Nation \cite{GN} used the theory of \emph{up-down perspectivity} to formalize a version of the ``uniqueness of composition factors'' for semimodular lattices, which was later strengthened in \cite{CS} (see also \cite{JH for lattices, JH for lattices2}). Another recent example is the exploration of ``Jordan--H\"older-like theorems'' for exact categories~\cite{BHT,enomoto}, which are a type of additive category which often contain objects whose subobject lattices are not (semi)modular. A commonality amongst all of these examples is that the ``composition series'' in question are assumed to be of finite length, as is the case for the classical Jordan--H\"older Theorem.

In the present paper, we extend the notion of a composition series so that there is no longer any assumption on the cardinality of the chain. While our primary motivation and examples come from the theory of abelian categories, most of our results hold more generally, and are therefore formulated in the language of modular lattices. More precisely, for a bounded lattice $\L$ (for example, the lattices of subobjects of an object in an abelian category), we consider (totally ordered) chains $\Delta \subseteq \L$. We then consider ``successive subfactors'' only at those points in $\Delta$ which contain an immediate successor or predecessor. When each of these subfactors is simple (or, in lattice terms, corresponds to a prime segment) and $\Delta$ satisfies certain closure conditions with respect to supremums and infimums, we call $\Delta$ a compositions series (or, in lattice terms, a prime chain). See  Section~\ref{sec:subobjectChains} for precise definitions and examples.

The consideration of infinite composition series goes back to at least 1934.
Indeed, in \cite{Birkhoff}, Birkhoff shows that if $\Delta$ and $\Gamma$ are composition series (in the above sense) for some group $G$ and are well-ordered, then their successive subfactors are the same up to permutation and isomorphism. The same paper, however, also shows that there is no hope for a fully general ``Jordan--H\"older-like theorem'' beyond this. Indeed, for distinct primes $p$ and $q$, we have that
$$ 0 \subsetneq \cdots \subsetneq p^2\mathbb{Z} \subsetneq p\mathbb{Z} \subsetneq \mathbb{Z}\qquad \text{and} \qquad 0\subsetneq \cdots \subsetneq q^2\mathbb{Z} \subsetneq q\mathbb{Z} \subsetneq \mathbb{Z}$$
are both composition series for $\mathbb{Z}$ (as a $\mathbb{Z}$-module) in the sense that their successive subfactors are simple. On the other hand, the successive subfactors of the first series are all cyclic of order $p$, while those of the second are cyclic of order $q$.

In this paper, we consider modular lattices (and objects in abelian categories) which satisfy an analog of the Grothendieck's properties (AB5) and (AB5$^*$). We call these lattices and objects (weakly) Jordan--H\"older--Schreier (Definition~\ref{JHSobjectsLattice}). We then establish a ``Jordan--H\"older-like theorem'' for this class of objects. This class includes objects of finite length, but is in general much larger.
For example, pointwise finite-dimensional persistence modules over arbitrary fields (Theorem~\ref{thm:pwf}) are (at least weakly) Jordan--H\"older--Schreier. More generally, in the category of right modules over an arbitrary ring, the (weakly) Jordan--H\"older--Schreier objects are precisely the so-called (AB5$^*$)-modules (Remark~\ref{rem:moduleJHS3}(4)). See Section~\ref{sec:examples} for details about these and additional examples.

%%%%%%%%%%%%%%%%%%%%%%%%%%%%%%%%%%%

\subsection{Motivation}
Our first motivation comes from the study of pointwise finite-dimensional persistence modules.
As defined by Botnan and Crawley-Boevey \cite{BC-B}, a \textdef{pointwise finite-dimensional persistence module} is a functor from a small category $\mathcal C$ to the category of finite dimensional vector spaces over an algebraically closed field $\field$.
Pointwise finite-dimensional persistence modules are primarily studied in topological data analysis via persistent homology. One key property in this setting is that, in many applications, the simple persistence modules are in bijection with the objects in the category $\mathcal C$.
This means there is a well-defined concept of support;
that is, for any pointwise finite-dimensional persistence module $M$, there is a set $\{x\in\mathcal C: M(x)\neq 0\}$ that we call the \textdef{support} of $M$.
In Section~\ref{sec:pwf} we show that if $\mathcal{C}$ is acyclic (for example, when $\mathcal{C}$ is a poset category), then the composition factors of a pointwise finite-dimensional persistence module $M$ over $\mathcal{C}$ are precisely the simple modules $S$ corresponding to the objects in the support of $M$.

Another source of motivation is Pr\"ufer modules. A module $M$ over an arbitrary ring $R$ is called \textdef{Pr\"ufer} if there exists a locally nilpotent surjective endomorphism $\varphi:M\rightarrow M$ so that $\ker(\varphi)$ has finite length. It follows that $M = \bigcup_{n\in\mathbb{N}}\ker(\varphi^n)$ is a filtered colimit of finite length modules, but is not itself of finite length.

In the representation theory of finite-dimensional (associative) algebras, Pr\"ufer modules are closely related to generic modules, and themselves contain information about the category of finitely-generated modules. See for example \cite{ringel}. It is therefore natural to try to include the Pr\"ufer modules when studying modules of finite length. As we show in Section~\ref{sec:preliminary}, there are many examples of Pr\"ufer modules which fit into our framework.

More generally, we note in Remark~\ref{rem:moduleJHS3} that a module over an arbitrary ring is weakly Jordan--H\"older--Schreier if and only if it is an ``(AB5$^*$)-module''. These modules have received considerable attention, especially in their relationship with with linear compactness and Morita duality, see \cite{anh} and the references therein. It has recently been shown that every (AB5$^*$)-module $M$ can be expressed as a direct sum of indecomposable modules \cite[Theorem~2.3]{IY}, and if one additionally assumes that $M$ has the ``finite exchange property'' then each indecomposable direct summand has local endomorphism ring and the decomposition is essentially unique \cite[Theorem~2.4]{IY}. This offers further rationale that one can see the (weakly) Jordan--H\"older--Schreier objects introduced in this paper as a natural generalization of finite length objects. In particular, we conjecture that it should be possible to extend the results of \cite{IY} to those (weakly) Jordan--H\"older--Scheier objects in an arbitrary well-powered abelian category which are ``locally finitely generated'' (see Definition~\ref{def:local_fin_gen_2}). We give a lattice-theoretic proof of the existence portion of this conjecture in Section~\ref{sec:directSum}.

As an explicit example, and returning to our first source of motivation, we note that direct sum decompositions are particularly relevant in the study of persistence modules. See e.g. \cite[Sections~3.1 and~4.1]{BL} and the references therein. Indeed, in the recent paper \cite{BC-B}, it is shown that every pointwise-finite dimensional persistence module (over any small category $\mathcal{C}$) decomposes uniquely as a direct sum of indecomposable persistence modules with local endomorphism rings. In \cite[Section~3.6]{GR}, this statement is also discussed, and its proof outlined, without explicit reference to persistence theory.

%%%%%%%%%%%%%%%%%%%%%%%%%%%%%%%%

\subsection{Organization and Main Results}

The organization of this paper is as follows.
In Section~\ref{sec:background}, we recall background information on the Jordan--H\"older and Schreier Theorems for modular lattices of finite length, subobjects in abelian categories, length categories, and (co)limits in functor categories.
In Section~\ref{sec:subobjectChains}, we define the notion of a \textdef{bicomplete chain} in a bounded lattice (Definition~\ref{def:bicompleteChain}) and formalize our notions of \textdef{prime chains} and composition series as special cases (Definitions~\ref{def:filtration} and~\ref{def:filtration_lattice}). We then prove our first main theorem.

\begin{thm}[Theorem \ref{prop:compExists}]\label{thm:intro:compExists}\
    \begin{enumerate}
        \item Let $\L$ be a complete bounded lattice. Then the prime chains of $\L$ and the maximal chains in $\L$ coincide. In particular, for any chain $\Delta \subseteq \L$, there exists a prime chain $\Delta'$ which is a refinement of $\L$.
        \item Let $X$ be an object of a well-powered abelian category $\Cat$, and suppose the lattice of subobjects of $X$ is complete. Then the composition series of $X$ and the maximal chains of subobjects of $X$ coincide. In particular, for any chain $\Delta$ of subobjects of $X$, there exists a composition series $\Delta'$ of $X$ which is a refinement of $\Delta$.
    \end{enumerate}
\end{thm}

\begin{remark}
    Note that in the statement of Theorem~\ref{thm:intro:compExists}(2), we require only that the subobjects of $X$ form a complete lattice; that is, we do not require the limit or colimit of an infinite chain of subobjects to exist. See Section~\ref{sec:limits_colimits} for further discussion.
\end{remark}

\begin{remark}
    We note that our notions of bicomplete (subobject) chains and composition series also come with ``one-sided'' variants, where we assume the chain is closed under either greatest lower bounds or least upper bounds, but not necessarily both. We maintain, however, that the ``two-sided'' variant is the correct generalization due to the fact that every ``one-sided'' composition series $\Delta$ can be extended into a ``two-sided'' composition series. See Example~\ref{ex:refineableCompSeries}.
\end{remark}

\begin{remark}
Theorem~\ref{thm:intro:compExists} only guarantees the existence of a composition series, but does not imply any notion of uniqueness. Indeed, $\mathbb{Z}$ (as a $\mathbb{Z}$-module) admits a composition series under our definition. See Example~\ref{ex:compSeries}.
\end{remark}

We begin Section~\ref{sec:JHSobjects} by defining what it means for two (bicomplete) chains to be \textdef{up-down perspective} (Definition~\ref{def:factorEquivalenceLattice}). For (bicomplete) chains of subobjects in an abelian category, this descends to \textdef{subfactor equivalence} (Definition~\ref{def:factorEquivalence}). In both cases, these definitions extend the classical notions used in the statements of the (finite length) Jordan--H\"older Theorems. We then introduce
\textdef{(weakly) Jordan--H\"older--Schreier} lattices and objects (Definition~\ref{JHSobjectsLattice}). This definition requires not only for least upper bounds and greatest lower bounds to exist, but also to satisfy properties related to Grothendieck's axioms (AB5) and (AB5$^*$).

Once establishing the definition and basic properties, Section~\ref{sec:JHS} is devoted to proving the main result of this paper.

\begin{thm}[Theorem \ref{thm:Jordan--Holder}]\label{thm:intro:Jordan--Holder}\ 
\begin{enumerate}
        \item Let $\L$ be a weakly Jordan--H\"older--Schreier (complete bounded modular) lattice. Then there exists a prime chain in $\L$ and any two primes chains in $\mathcal{L}$ are up-down perspective. Moreover, the map inducing the up-down perspectivity between two given prime chains is unique.
        \item Let $X$ be a weakly Jordan--H\"older--Schreier object in a well-powered abelian category.
   Then there exists a composition series for $X$ and any two composition series for $X$ are subfactor equivalent.
   \end{enumerate}
\end{thm}

In light of this theorem, the (multi)set of composition factors associated to a (weakly) Jordan--H\"older--Schreier object $X$, denoted $\fac(X)$, is well-defined.

Our proof of Theorem \ref{thm:intro:Jordan--Holder} is similar to that of the classical (and well-ordered) Jordan--H\"older Theorem given by ``conflating'' two chains of subobjects (see e.g. \cite[Section~I.3]{Lang}, \cite{Transfinite}). In particular, it relies on a generalization of Schreier's classical ``refinement theorem'' (see Theorems~\ref{thm:refinement},~\ref{thm:JH_lattices}(2), and~\ref{thm:schreier}).

As a consequence of Theorem~\ref{thm:intro:Jordan--Holder}, we show that (weakly) Jordan--H\"older--Schreier objects have many properties in common with those of finite length. In particular, we prove the following, which also lends itself to a lattice-theoretic generalization given explicitly in Theorem~\ref{cor:inducedInclusion}.

\begin{thm}[Theorem~\ref{cor:inducedInclusion}, simplified]\label{thm:intro:inducedInclusion}
    Let $X$ be a weakly Jordan--H\"older--Schreier object in a well-powered abelian category $\Cat$. Let
    $0\rightarrow Y \xrightarrow{f} X \xrightarrow{g} Z\rightarrow 0$
    be a short exact sequence in $\Cat$. Then $Y$ and $Z$ are also weakly Jordan--H\"older--Schreier and there is an induced bijection (of multisets) $\fac(Y)\sqcup\fac(Z) \cong \fac(X)$. Moreover, if $Y$ (respectively $Z$) is Jordan--H\"older--Schreier, then $Z = 0$ (respectively $Y = 0$) if and only if the induced inclusion $\fac(Y)\hookrightarrow \fac(X)$ (respectively $\fac(Z) \hookrightarrow \fac(X)$) is an isomorphism.
\end{thm}

In Section~\ref{sec:limits_colimits}, we compare the completeness of lattices of subobjects to the existence of exact bounded direct limits. In particular, we show that if $\mathcal{A}$ and $\mathcal{A}^{op}$ have $X$-bounded direct limits for some object $X$, then $X$ must be weakly Jordan--H\"older--Schreier (Corollary~\ref{cor:catComp_AB5}). We further show that the categories $\Cat$ and $\Cat^{op}$ both have exact bounded direct limits if and only if every object in $\Cat$ is weakly Jordan--H\"older--Schreier. (Corollary~\ref{cor:catComp_AB5_2}).

In Section~\ref{sec:examples}, we give examples and non-examples of (weakly) Jordan--H\"older--Schreier objects. Our examples include objects of finite length and (AB5$^*$) modules (Section~\ref{sec:preliminary}), as well as functor categories with a length category as a target (Section \ref{sec:pwf}). The latter in particular includes categories of pointwise definite-dimensional persistence modules. Furthermore, when the source category is directed (for example, when the functors are presheaves over some topological space), we describe the composition factors explicitly. See Sections~\ref{sec:fun cats} and~\ref{sec:pwf} for precise definitions and the more general version of our final main theorem, which we state in simplified form below.

\begin{thm}[Proposition~\ref{prop:FunctorCategoryIsJHS}, Theorem~\ref{thm:pwf}, and Corollary \ref{cor:pwf}, simplified]\label{thm:intro:pwf}\ 
\begin{enumerate}
    \item Let $\mathcal{C}$ be a directed small category and let $\field$ be an arbitrary field. Then any pointwise finite-dimensional $\mathcal{C}$-persistence module $M:\mathcal{C}\rightarrow \veck$ is Jordan--H\"older--Schreier (in the category of covariant functors from $\mathcal{C}$ to finite-dimensional $\field$-vector spaces). Moreover, for each object $X$ of $\mathcal{C}$, the simple module with support at $X$ is a composition factor of $M$ with multiplicity $\dim_\field M(X)$.
    \item Let $\Cat$ be a well-powered abelian category such that every object in $\Cat$ is Jordan--H\"older--Schreier.
    Then any presheaf on a topological space $X$ with values in $\Cat$ is Jordan--H\"older--Schreier (in the category of contravariant functors from the poset of open sets of $X$ to~$\mathcal{A}$).
\end{enumerate}
\end{thm}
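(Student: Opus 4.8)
The plan is to reduce both parts to the general statement about functor categories (Proposition~\ref{prop:FunctorCategoryIsJHS}), which asserts that if $\mathcal{C}$ is directed and the target $\Cat$ is a category in which every object is Jordan--H\"older--Schreier, then every functor $M\colon\mathcal{C}\rightarrow\Cat$ that is "pointwise Jordan--H\"older--Schreier" (in the pfd case, pointwise finite-dimensional, so automatically of finite length hence Jordan--H\"older--Schreier by Section~\ref{sec:length}) is itself Jordan--H\"older--Schreier in the functor category. So the first step is to verify the hypotheses of that proposition in each setting: for part (1), take $\Cat = \veck$, note $\veck$ is a length category so every object is Jordan--H\"older--Schreier, and that a directed $\mathcal{C}$ is exactly the hypothesis; for part (2), the poset of open sets of a topological space, ordered by \emph{reverse} inclusion, is a directed (indeed filtered) category because the intersection of two open sets is open and refines both, so contravariant functors out of it are covariant functors out of a directed category, and the target $\Cat$ is assumed to have all objects Jordan--H\"older--Schreier.

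The substantive work therefore lives in Proposition~\ref{prop:FunctorCategoryIsJHS}, and I would structure its proof around the definition of (weakly) Jordan--H\"older--Schreier objects (Definition~\ref{JHSobjects}): one must show that for the functor $M$, least upper bounds and greatest lower bounds of totally ordered sets of subfunctors exist and are "well-behaved" in the required sense. The key point is that in a functor category (co)limits are computed pointwise, so a totally ordered set $\Delta$ of subfunctors of $M$ restricts, at each object $X\in\mathcal{C}$, to a totally ordered set $\Delta_X$ of subobjects of $M(X)$ in $\Cat$; since $M(X)$ is Jordan--H\"older--Schreier in $\Cat$, the pointwise colimit $\colim\Delta_X$ and limit $\lim\Delta_X$ exist and behave well. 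I would then assemble these pointwise (co)limits into a subfunctor of $M$ (checking functoriality of the structure maps, which is where directedness of $\mathcal{C}$ enters: a filtered colimit commutes with the finite limits / finite-length phenomena governing the subfactors, so the transition maps of $M$ carry the pointwise bounds to pointwise bounds), and verify the closure axioms of Definition~\ref{JHSobjects} by reducing each to its pointwise analogue.

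For the explicit description of composition factors in the pfd case, once $M$ is known to be Jordan--H\"older--Schreier, Theorem~\ref{thm:Jordan--Holder} tells us $\fac(M)$ is well-defined, so it suffices to exhibit \emph{one} composition series and read off its subfactors. I would build such a series by choosing, for each $X\in\mathcal{C}$, a complete flag of subspaces of $M(X)$ compatible with the (directed) transition maps — concretely, refining the "support filtration" of $M$ and using that the simple functors are exactly the $S_X$ supported at single objects (as the introduction notes, using acyclicity/directedness of $\mathcal{C}$) — so that each successive subfactor is some $S_X$, and $S_X$ occurs exactly $\dim_\field M(X)$ times. The main obstacle I anticipate is precisely this compatibility: arranging the pointwise flags so that the transition maps of $M$ send each flag into the next (equivalently, that the chosen subfunctors really are subfunctors), and checking that the resulting transfinite chain satisfies the bicompleteness closure conditions of Definition~\ref{def:bicompleteChain} rather than merely being totally ordered — this is the step where the directedness hypothesis and the pointwise-finite-dimensionality are both genuinely used, and where the careful bookkeeping of Sections~\ref{sec:fun cats} and~\ref{sec:pwf} is required.
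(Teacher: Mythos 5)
Your reduction of both parts to a pointwise argument in the functor category does match the paper's first step (Proposition~\ref{prop:FunctorCategoryIsJHS}): since (co)limits of subfunctor chains are computed pointwise, the axioms \ref{JHS1}--\ref{JHS4} for $M$ follow from the corresponding axioms for each $M(x)$. But note two inaccuracies in how you set this up. First, that proposition needs no directedness and yields only that $M$ is \emph{weakly} JHS; the full JHS conclusion (axiom \ref{JHS5}) and the multiplicity statement are exactly the content of Theorem~\ref{thm:pwf}, so they cannot be ``read off'' from the functor-category proposition, and your aside that directedness is what makes the pointwise colimits assemble into a subfunctor is not right (that assembly is automatic for any small $\mathcal{C}$). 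Second, the paper's notion of ``directed'' is the acyclicity condition of Section~\ref{sec:pwf} (no morphisms in both directions, only identity endomorphisms), not filteredness; your justification for part (2) via ``intersections of open sets'' argues filteredness, which is irrelevant here --- the correct (and easy) observation is that any poset category, hence also the opposite of the open-set poset, is directed in the paper's sense.

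The genuine gap is the construction of an explicit composition series, which is where all the remaining content lives. Your plan --- choose at each object $x$ a flag of subspaces of $M(x)$ ``compatible with the transition maps'' --- is both unnecessary and, taken literally, generally impossible: there is no reason the maps $M(f)$ should carry a chosen flag at $x$ into a chosen flag at $y$ index-by-index, and you correctly flag this as the main obstacle but do not resolve it. The paper's construction avoids any such compatibility. Using directedness, one chooses a total order $\leq$ on $\supp(M)$ with $\Hom_{\mathcal{C}}(x,y)=\emptyset$ whenever $y\lneq x$, picks a composition series $\Omega_x$ of $M(x)$ for each $x$, and forms the subfunctors $M_{x,X}$ (for $X\in\Omega_x$) which equal $M(y)$ for $x\lneq y$, equal $X$ at $x$, and are $0$ for $y\lneq x$; the point is that ``zero below, everything above'' makes these subfunctors regardless of how $\Omega_x$ interacts with the transition maps, and the resulting chain is totally ordered with successive subfactors the simples $\widetilde{S}_x$, $S\in\fac(M(x))$, giving multiplicity $\dim_\field M(x)$ in the pfd case. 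Moreover, bicompleteness of this chain is not automatic: one must also adjoin the functors $M_I$ for intervals $I\subseteq\supp(M)$ closed above and without minimum, which supply the limit/colimit terms the closure axioms \ref{F4} and \ref{F5} demand. Without this interleaving of a linear order on objects with the flags at each object (and the interval terms), both the \ref{JHS5} claim and the multiplicity claim in part (1), as well as the full JHS claim for presheaves in part (2), remain unproved.
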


We conclude Section~\ref{sec:examples} with two additional examples. First, we show that every object of Igusa and Todorov's category of representations of $\mathbb{R}$ \cite{IT15} are weakly Jordan--H\"older--Schreier, even though this category contains no simple objects (Section \ref{sec:ITreps}). Finally, we show that, over any ring, the infinite product of copies of some simple module is not weakly Jordan--H\"older--Schreier and admits two composition series with different cardinalities (Section~\ref{sec:grothendieck}).

Lastly, in Section~\ref{sec:directSum}, we discuss existence and uniqueness properties for direct sum decompositions, and more generally \textdef{independent decompositions} (Definition~\ref{def:independent}), for Jordan--H\"older--Schreier objects and lattices. In particular, we give partial results extending the recent work \cite{IY} on direct sum decompositions of (AB5$^*$)-modules (Theorem~\ref{thm:decomp_existence}). We also describe a plan for continuing this work and give a formal conjecture regarding the uniqueness of independent decompositions (Conjecture~\ref{conj:decomp_unique}).

\subsection{Related work on other additive categories}
While we have focused mainly on applications of our results to abelian categories, there has been recent work studying the (finite) Jordan--H\"older property in other additive categories, including exact categories \cite{BHT,enomoto} and the more general extriangulated categories \cite{BHST,WWZZ}. It is made explicit in \cite[Section~2.4]{enomoto} that if $X$ is an object in exact category $\mathcal{A}$ and both (i) $X$ admits a (finite) composition series, and (ii) the admissible subobjects of $X$ form a modular lattice, then every composition series of $X$ will have the same length as a result of the Jordan--H\"older Theorem for modular lattices. The present paper could thus be used to extend this result by (i) removing the assumption that $X$ admits a finite composition series, and (ii) assuming that the lattice of admissible subobjects of $X$ is ``weakly Jordan--H\"older--Schreier''. On the other hand, it is made explicit in \cite[Section~5.2]{BHT} that there are examples of objects in exact categories which satisfy a Jordan--H\"older property, but whose posets of admissible subobjects are not (modular) lattices. It would be an interesting problem to see whether the posets in these examples fit into a larger family which satisfy an order-theoretic Jordan--H\"older property.

\subsection*{Acknowledgments}
EH was partially supported by the Canada
Research Chairs program (CRC-2021-00120) and NSERC Discovery Grants (RGPIN-2022-03960
and RGPIN/04465-2019). JDR was partly supported at Ghent University by BOF grant 01P12621. Portions of this work were completed while EH was a graduate student at Brandeis University and a postdoc at Universit\'e du Qu\'ebec \`a Montr\'eal and Universit\'e de Sherbrooke, and while JDR was supported by the Hausdorff Research Institute for Mathematics. The authors thank these institutions for their support and hospitality. The authors are thankful to Aslak Bakke Buan for helpful comments. They are also grateful to an anonymous referee for their thoughtful suggestions for simplifying our axioms, expanding our results to modular lattices, and exploring direct sum decompositions and quasi-coherent sheaves.
JDR thanks Sergio Estrada for enlightening discussions regarding quasi-coherent sheaves.

\section{Background}\label{sec:background}

We begin by recalling several definitions about (modular) lattices. We refer to \cite{gratzer_book} as a standard reference for this material. Let $\L = (\L,\leq)$ be a poset. Given a subset $\Delta \subseteq \L$, we denote by $\bigvee \Delta$ and $\bigwedge\Delta$ the supremum (or join) and infimum (or meet) of $\Delta$, if they exist. The poset $\L$ is a called a \textdef{lattice} if for all $x, y \in \L$, both $x \vee y := \bigvee\{x,y\}$ and $x \wedge y := \bigwedge\{x,y\}$ exist. We say a lattice $\L$ is \textdef{modular} if $x \vee (y \wedge z) = (x \vee y) \wedge z$ for any $x, y, z \in \L$ such that $x \leq z$. (Note that satisfying the modular law is a weaker condition than satisfying the distributive law(s).) Finally, we say a lattice $\L$ is \textdef{bounded} if there exists a unique maximal element $\hat{1} \in \L$ and a unique minimal element $\hat{0} \in \L$. By convention, we set $\bigvee \emptyset:= \hat{0}$ and $\bigwedge \emptyset := \hat{1}$ for any bounded lattice. Unless otherwise stated, the symbol $\L$ will always refer to a bounded modular lattice.

If $\L$ is a lattice, it is straightforward to show that $\bigvee\Delta$ and $\bigwedge\Delta$ both exist for any nonempty finite subset $\Delta \subseteq \L$. If, moreover, $\bigvee\Delta$ (respective $\bigwedge\Delta$) exists for all $\Delta \subseteq \L$, the lattice $\L$ is called \textdef{complete} (respectively \textdef{cocomplete}). It is well-known that a bounded lattice is complete if and only if it is cocomplete. Indeed, for $\Delta \subseteq \L$, one will have $\bigwedge\Delta = \bigvee\{y \in \L \mid (\forall x \in \Delta) \ y \leq x \}$ and vice versa.

A subset $\mathcal{K} \subseteq \L$ is called a \textdef{sublattice} if given any $x, y \in \mathcal{K}$, one has $x \vee y, x \wedge y \in \mathcal{K}$. If in addition $\L$ is complete and for any nonempty subset $\Delta \subseteq \mathcal{K}$ one has $\bigvee \Delta, \bigwedge \Delta \in \mathcal{K}$, then $\mathcal{K}$ is called a \textdef{complete sublattice}. For example, given $s_0 \leq s_1 \in \L$, the \textdef{segment} from $s_0$ to $s_1$ is the subset $[s_0,s_1]:= \{x \in \L \mid s_0 \leq x \leq s_1\}$. It is straightforward to show that $[s_0,s_1]$ is a complete sublattice with minimal element $s_0$ and maximal element $s_1$. Finally, we recall that $[s_0,s_1]$ is a \textdef{prime segment} if $|[s_0,s_1]| = 2$. In this case, one says that $s_0 \leq s_1$ is a \textdef{cover relation}.

%%%%%%%%%%%%%%%%%%%%%%%%%%%%%%%%%%%%%%%%%%%%%%%%%%%%%

\subsection{Subobjects in abelian categories}\label{sec:subobjects}
We now recall the definition and basic properties of subobjects in an abelian category. A detailed and well-written treatise for this theory can be found in~\cite{abelian lecture notes}.

Fix an abelian category $\Cat$. We recall that in $\Cat$, finite limits and finite colimits always exist. Moreover, arbitrary limits will commute with one another whenever they exist, and likewise for colimits.

Let $X$ be an object in $\Cat$. A \textdef{subobject} of $X$ is a pair $(Y,\iota_Y)$ where $Y$ is also an object in  $\Cat$ and $\iota_Y:Y\rightarrow X$ is a monomorphism. The subobjects of $X$ form a category, which we denote $\Sub(X)$. Morphisms in this category are given by
$$\Hom_{\Sub(X)}((Y,\iota_Y),(Z,\iota_Z)) = \{h \in \Hom_\Cat(Y,Z)| \iota_Y = \iota_Z\circ h\}.$$
Necessarily, we see that if $h \in \Hom_{\Sub(X)}((Y,\iota_Y),(Z,\iota_Z))$, then $h$ is a monomorphism in $\Cat$. Moreover, we have that $|\Hom_{\Sub(X)}((Y,\iota_Y),(Z,\iota_Z))| \leq 1$.

We will assume throughout this paper that $\Cat$ is well-powered, which by definition means that $\Sub(X)$ is skeletally small for every object $X$. Thus (any skeleton of) $\Sub(X)$ has the structure of a poset under the relation $(Y,\iota_Y) \subob (Z,\iota_Z)$ if $\Hom((Y,\iota_Y),(Z,\iota_Z)) \neq \emptyset$. If we denote this unique map by $f_{Y,Z}$, then we can view $(Y,f_{Y,Z})$ as an object in $\Sub(Z)$. As a result, we will also use the notation $\subob$ to mean ``is a subobject of''. We will freely move between using $\Sub(X)$ to denote both the category of subobjects and the poset obtained by taking a skeleton. The above discussion can then by summarized as saying that given $Z \subob X$, the lattice $\Sub(Z)$ can be identified with the interval $[0,Z] \subseteq \Sub(X)$.

In the case that $\Cat$ is a module category, the relation $\subob$ coincides with the usual notion of containment for submodules. Nevertheless, we have chosen to use the notation $\subob$ for this relation to avoid confusion with refinements of subobject chains, which are actually subsets (see Definitions~\ref{def:bicompleteChain} and ~\ref{def:refinement}).

We adopt the common notation of omitting the inclusion map from the description of a subobject when this data is implied. We caution that, as in the module case, this means there may be subobjects $Y$ and $Z$ of $X$ which are isomorphic in $\Cat$ but which are not isomorphic in $\Sub(X)$. To distinguish this, we will say that $Y = Z$ when they are isomorphic as subobjects of $X$.

The poset $\Sub(X)$ is known to be a bounded modular lattice. Given $Y,Z \in \Sub(X)$, their supremum is $Y \vee Z = Y+Z$ and their infimum is $Y \wedge Z = Y\cap Z$. In module categories, these coincide with the usual notions of sums and intersections. Categorically, we have that $Y \cap Z$ and $Y + Z$ are the kernel and image of the morphism $Y\oplus Z \xrightarrow{[\iota_Y \ \iota_Z]} X$, respectively. In particular, let $j_Y: Z\cap Y \rightarrow Y$ and $J_Z:Z\cap Y \rightarrow Z$ be the inclusion maps. Then $Y\cap Z$ and $Y+ Z$ are the pullback and pushout of the respective diagrams $$Z\xrightarrow{\iota_Z} X\xleftarrow{\iota_Y} Y \qquad \text{and} \qquad Z\xleftarrow{j_Z} Z\cap Y \xrightarrow{j_Y} Y.$$

If $\Cat$ has arbitrary coproducts, we can more generally speak of infinite intersections and sums. Indeed, in this case the lattice $\Sub(X)$ is complete and satisfies $\bigvee \Delta = \sum \Delta$ for any $\Delta \subseteq \Sub(X)$. On the other hand, it is possible that $\Sub(X)$ is complete even if $\Cat$ does not have arbitrary coproducts. Due to this fact, we will typically use the notation $\bigvee \Delta$ rather than $\sum \Delta$. Likewise, we will use $\bigwedge \Delta$ rather than $\bigcap \Delta$.

It should be noted that there is a dual theory for quotient objects in an abelian category $\Cat$, which can be viewed as the presented theory in $\Cat^{op}$.
As we shall see in later sections, it is sometimes useful to move between the lattice of subobjects and that of quotient objects. To that end, we formulate the third and fourth isomorphism theorems in the language of this paper.

\begin{proposition}\label{prop:subQuotients}
    Let $X$ be an object in $\Cat$.
    \begin{enumerate}
        \item Let $Z \subob Y \subob X$. Then the segment $[Z,Y] \subseteq \Sub(X)$ is isomorphic (as a complete lattice) to $\Sub(Y/Z)$ via the association $W \mapsto W/Z$. In particular, $[Z,Y]$ is a prime segment if and only if $Y/Z$ is simple.
        \item Let $\Sub^{op}(X)$ denote the poset of subobject of $X$ in $\Cat^{op}$. Then there is an anti-isomorphism of complete lattices $\Sub(X) \rightarrow \Sub^{op}(X)$ given by $Y \mapsto X/Y$. Moreover, given a segment $[Z,Y] \subseteq \Sub(X)$, one has that $Y/Z$ is the kernel of the quotient map $(X/Y) \twoheadrightarrow (X/Z)$.
    \end{enumerate}
\end{proposition}
Finally, we recall that an object $Y$ in $\Cat$ is called a \textdef{subquotient} of $X$ if there exists a subobject $Z \subob X$ and an epimorphism $Z \twoheadrightarrow Y$. This notion is also self-dual, in that $Y$ is a subquotient of $X$ if and only if there exists some epimorphism $X \twoheadrightarrow Z'$ such that $Y \in \Sub(Z')$. In this language, Proposition~\ref{prop:subQuotients}(1) in particular says that $Y$ is a subquotient of $X$ if and only if $\Sub(Y)$ can be identified with a segment in $\Sub(X)$.

%%%%%%%%%%%%%%%%%%%%%%%%%%%%%%%%%%%%%%%%%%%%%%%%%%
\subsection{Composition series and length}\label{sec:length background}
In this section, we discuss the notions of composition series and length in abelian categories. We then discuss the extension of these notions to modular lattices in the following section. Recall that $\Cat$ denote a well-powered abelian category, and let $X$ be an object in $\mathcal{A}$. 

Consider
$$\Delta = \{0=X_0 \subobneq X_1 \subobneq \cdots\subobneq X_{n-1} \subobneq X_n=X\}$$
a finite sequence of subobjects of $X$.
We refer to the objects $X_i/X_{i-1}$ for $1 \leq i \leq n$ as the (successive) subfactors of $\Delta$. If all of these subfactors are simple, then $\Delta$ is called a \textdef{composition series}. We say $\Delta$ is \textdef{subfactor equivalent} to another finite chain of subobjects
$$\Gamma = \{0 = Y_0 \subobneq Y_1 \subobneq \cdots\subobneq Y_{m-1}\subobneq Y_m = X\}$$
if $m = n$ and there exists a permutation $\sigma$ on $\{1,\ldots,n\}$ so that $X_i/X_{i-1} \cong Y_{\sigma(i)}/Y_{\sigma(i)-1}$ for each $i$. This allows us to state the well-known Jordan--H\"older Theorem for abelian categories.

\begin{theorem}[Jordan--H\"older Theorem]\label{thm:jh}
    Let $X$ be an object in $\Cat$ and let $\Delta$ and $\Gamma$ be composition series of $X$. Then $\Delta$ and $\Gamma$ are subfactor equivalent.
\end{theorem}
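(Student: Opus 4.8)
The plan is to argue by strong induction on the length $n$ of a given composition series, proving the statement in the following form: \emph{if $X$ admits a composition series $\Delta = \{0=X_0 \subobneq X_1 \subobneq \cdots \subobneq X_n = X\}$ of length $n$, then every composition series $\Gamma = \{0 = Y_0 \subobneq \cdots \subobneq Y_m = X\}$ of $X$ satisfies $m = n$ and is subfactor equivalent to $\Delta$.} The cases $n = 0$ (so $X = 0$, and the empty series is the only one) and $n = 1$ (so $X$ is simple) are immediate. For the inductive step, the decisive point is whether the maximal subobjects $X_{n-1}$ and $Y_{m-1}$ of $X$ coincide as subobjects of $X$ or not.

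If $X_{n-1} = Y_{m-1}$, then $\{X_0 \subobneq \cdots \subobneq X_{n-1}\}$ is a composition series of $X_{n-1}$ of length $n-1$ and $\{Y_0 \subobneq \cdots \subobneq Y_{m-1}\}$ is another composition series of $X_{n-1}$; the inductive hypothesis makes these subfactor equivalent (and forces $m-1 = n-1$), and appending the common top subfactor $X/X_{n-1}$ shows $\Delta$ and $\Gamma$ are subfactor equivalent.

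If $X_{n-1} \neq Y_{m-1}$, then neither can be contained in the other by maximality, so $X_{n-1} + Y_{m-1} = X$; writing $W := X_{n-1}\cap Y_{m-1}$, the parallelogram (second) isomorphism in an abelian category gives $X_{n-1}/W \cong X/Y_{m-1}$ and $Y_{m-1}/W \cong X/X_{n-1}$, both simple, so $W$ is a maximal subobject of each of $X_{n-1}$ and $Y_{m-1}$. Using the auxiliary fact that a subobject of an object admitting a composition series again admits one (itself proved by a parallel induction, which also yields finiteness of length), fix a composition series $\Omega$ of $W$. Then $\Delta' := \Omega\cup\{W\subobneq X_{n-1}\subobneq X\}$ and $\Gamma' := \Omega\cup\{W\subobneq Y_{m-1}\subobneq X\}$ are composition series of $X$ which are subfactor equivalent to one another, because their top two subfactors form the same multiset $\{X/X_{n-1},\,X/Y_{m-1}\}$ (and they agree below $W$). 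Now the inductive hypothesis applied to $X_{n-1}$ (which carries the length-$(n-1)$ series from $\Delta$) identifies $\Delta$ with $\Delta'$ up to subfactor equivalence and forces $|\Omega| = n-2$; consequently $\Omega\cup\{W\subobneq Y_{m-1}\}$ is a composition series of $Y_{m-1}$ of length $n-1$, so a further application of the inductive hypothesis identifies $\Gamma$ with $\Gamma'$ up to subfactor equivalence. Chaining $\Delta \sim \Delta' \sim \Gamma' \sim \Gamma$ completes the step.

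The substantive work is entirely in the second case, and the two ingredients needing care beyond the module setting are the parallelogram isomorphism for subobjects (a standard diagram chase, e.g.\ via the snake lemma) and the lemma that subobjects of objects with composition series again have composition series; I expect the length bookkeeping in the induction—ensuring the hypothesis is legitimately applicable to $Y_{m-1}$—to be the most error-prone point, though it is routine. As an alternative one could deduce the theorem from a Schreier-type refinement theorem together with the Zassenhaus lemma: any two finite subobject chains from $0$ to $X$ admit subfactor-equivalent refinements, while a composition series admits no proper refinement, so each refinement agrees with its original up to inserting zero subfactors; this is the line of argument generalized in Section~\ref{sec:JHS}.
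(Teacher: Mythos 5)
Your argument is correct, but it is not the route the paper takes: the paper treats this statement as classical background and points to the strategy of first proving the Schreier Refinement Theorem (Theorem~\ref{thm:refinement}, via the Zassenhaus Butterfly Lemma, cf.\ Lemma~\ref{lem:butterfly}) and then deducing Jordan--H\"older as a corollary, since a composition series admits no proper refinement beyond trivial repetitions; this is exactly the strategy the paper later generalizes through conflations to Theorems~\ref{thm:schreier} and~\ref{thm:Jordan--Holder}. Your proof instead runs a direct strong induction on the length of one composition series, with the dichotomy $X_{n-1}=Y_{m-1}$ versus $X_{n-1}\neq Y_{m-1}$, the parallelogram isomorphism $X_{n-1}/(X_{n-1}\cap Y_{m-1})\cong X/Y_{m-1}$, and the auxiliary lemma that subobjects of finite-length objects have finite length; the length bookkeeping you flag (applying the hypothesis to $Y_{m-1}$ only after exhibiting a length-$(n-1)$ series $\Omega\cup\{W\subobneq Y_{m-1}\}$ for it) is handled correctly. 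What each approach buys: your induction is more elementary and self-contained --- it needs only the second isomorphism theorem rather than the full Zassenhaus lemma and refinement machinery --- but it is intrinsically tied to finiteness of the chains (induction on length), so it does not extend to the chains of arbitrary cardinality that are the subject of this paper, whereas the refinement-plus-maximality argument is precisely the one that survives the generalization, which is why the paper organizes its proof of Theorem~\ref{thm:Jordan--Holder} around a ``Schreier-like'' theorem rather than around an induction.
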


Based upon this theorem, if $\Delta$ is a composition series of $X$, then the length of $\Delta$ is referred to as the \textdef{length} of $X$ and the subfactors of $\Delta$ are referred to as the \textdef{composition factors} of $X$. The abelian category $\Cat$ is then called a \textdef{length category} if every object has finite length (and thus well-defined length and composition factors).

Closely related to the Jordan--H\"older Theorem is the Schreier Refinement Theorem, which says that one may essentially conflate the data of two finite chains of subobjects.

\begin{theorem}[Schreier Refinement Theorem]\label{thm:refinement}
    Let $X$ be an object in $\Cat$ and let $\Delta$ and $\Gamma$ be finite filtrations of $X$.
    Then there exist filtrations $\Delta'$ and $\Gamma'$ of $X$ such that:
    \begin{enumerate}
        \item Each object that appears in $\Delta$ appears in $\Delta'$ and each object that appears in $\Gamma$ appears in $\Gamma'$.
        \item The filtrations $\Delta'$ and $\Gamma'$ are (subfactor) equivalent.
    \end{enumerate}
\end{theorem}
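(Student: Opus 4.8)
The plan is to deduce this from the \emph{Zassenhaus (butterfly) lemma}, following the classical group- and module-theoretic proof, after checking that its two ingredients survive in an arbitrary abelian category. The first ingredient is that $\Sub(X)$ is a \emph{modular} lattice: if $U \subob W$ and $V$ is any subobject of $X$, then $(U+V)\cap W = U + (V\cap W)$. The second is the second isomorphism theorem: for subobjects $U\subob V$ and $W$ of $X$ there is an isomorphism of objects $(V+W)/(U+W)\cong V/(V\cap(U+W))$. Both hold in any abelian category, and I would verify them by the standard diagram chase --- either invoking Mitchell's embedding theorem to reduce to the module case, or arguing directly from the snake lemma together with the pullback/pushout descriptions of $V\cap W$ and $V+W$ recalled in Section~\ref{sec:subobjects}.

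Granting these, the next step is the Zassenhaus lemma: for subobjects $A'\subob A$ and $B'\subob B$ of $X$ one has \[ \frac{A' + (A\cap B)}{A' + (A\cap B')}\;\cong\;\frac{B' + (A\cap B)}{B' + (A'\cap B)}. \] Setting $M = A\cap B$ and $N = (A'\cap B)+(A\cap B')$, one checks $N\subob M$ and $A'+N = A'+(A\cap B')$; the second isomorphism theorem then gives $(A'+M)/(A'+N)\cong M/\bigl(M\cap(A'+N)\bigr)$, and modularity (with $A\cap B'\subob A\cap B = M$) simplifies $M\cap(A'+N) = (A\cap B)\cap\bigl(A'+(A\cap B')\bigr) = (A\cap B')+(A'\cap B) = N$. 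Hence the left-hand side is isomorphic to $(A\cap B)/N$, an expression symmetric in the pairs $(A,A')$ and $(B,B')$; running the same computation with the roles exchanged shows the right-hand side is also isomorphic to $(A\cap B)/N$, and composing the two isomorphisms proves the lemma.

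With the lemma available, I would refine the given filtrations $\Delta = \{0 = X_0\subobneq\cdots\subobneq X_n = X\}$ and $\Gamma = \{0 = Y_0\subobneq\cdots\subobneq Y_m = X\}$ by interpolation: let $\Delta'$ consist of the subobjects $X_{i-1}+(X_i\cap Y_j)$ for $1\le i\le n$ and $0\le j\le m$, and let $\Gamma'$ consist of the subobjects $Y_{j-1}+(Y_j\cap X_i)$ for $1\le j\le m$ and $0\le i\le n$. Monotonicity of $\cap$ and $+$ makes each of these a totally ordered set of subobjects of $X$, and since $X_{i-1}+(X_i\cap Y_0) = X_{i-1}$ and $X_{i-1}+(X_i\cap Y_m) = X_i$ (and symmetrically for $\Gamma'$), every term of $\Delta$ appears in $\Delta'$ and every term of $\Gamma$ appears in $\Gamma'$, which establishes~(1). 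For~(2), the subfactor of $\Delta'$ between $X_{i-1}+(X_i\cap Y_{j-1})$ and $X_{i-1}+(X_i\cap Y_j)$ is isomorphic to the subfactor of $\Gamma'$ between $Y_{j-1}+(Y_j\cap X_{i-1})$ and $Y_{j-1}+(Y_j\cap X_i)$ by the Zassenhaus lemma with $A = X_i$, $A' = X_{i-1}$, $B = Y_j$, $B' = Y_{j-1}$; this pairs the $nm$ subfactor slots of $\Delta'$ with those of $\Gamma'$ via $(i,j)\leftrightarrow(j,i)$. Some slots may be trivial, so $\Delta'$ and $\Gamma'$ as written need not be strictly increasing, but a $\Delta'$-slot is trivial precisely when its $\Gamma'$-partner is, so deleting the repeated terms from both sides simultaneously leaves two genuine filtrations whose remaining nonzero subfactors are still in bijection --- which is exactly subfactor equivalence.

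The main obstacle is essentially the Zassenhaus lemma, and underneath it the need to be careful that modularity of $\Sub(X)$ and the second isomorphism theorem are genuinely available and are being applied at the level of subobjects of $X$ rather than of mere isomorphism classes of objects. Once those are secured, the interpolation and the bookkeeping with trivial slots are routine.
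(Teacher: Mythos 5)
Your proof is correct and follows essentially the same route as the paper: the paper states this finite case as classical (citing Lang) and its own generalization (Theorem~\ref{thm:schreier}) rests on exactly your two ingredients, namely the Zassenhaus lemma (Lemma~\ref{lem:butterfly}, proved there by the same isomorphism-theorem computation $A'+(A\cap B)\big/\bigl(A'+(A\cap B')\bigr)\cong (A\cap B)\big/\bigl((A'\cap B)+(A\cap B')\bigr)$) and the interpolated terms $X_{i-1}+(X_i\cap Y_j)$, which reappear as the conflation $\con^-(\Delta,\Gamma)$ in Definition~\ref{def:conflation}. Your bookkeeping with the trivial slots is the standard one and is fine.
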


Indeed, it is common to prove the Schreier Refinement Theorem first and obtain the Jordan--H\"older Theorem as a corollary. See e.g.~\cite[Section~I.3]{Lang} in the setting of groups. We will adopt a similar strategy in proving our generalization of the Jordan--H\"older Theorem.

%%%%%%%%%%%%%%%%%%%%%%%%%%%%%%%%%%%%%%%%%%%%%%%%%%
\subsection{The Jordan--H\"older and Schreier Theorems for modular lattices} \label{sec:jh_for_lattices}
As mentioned in the introduction, both Theorem~\ref{thm:jh} and Theorem~\ref{thm:refinement} can be deduced from more general lattice-theoretic statements. We will review these more general statements in this section.

Fix a bounded modular lattice $\L$.
For each segment $\S = [s_0,s_1] \subseteq \L$, there is an order-preserving map $\pi_\S: \L \rightarrow \S$ given by
$$\pi_\S(x) := (x \vee s_0) \wedge s_1 = (x \wedge s_1) \vee s_0.$$
Two segments $\S, \T \subseteq \L$ are said to be \textdef{up-down perspective}, and one writes $\S \pers \T$, if the maps $\pi_{\S}|_\T$ and $\pi_\T|_\S$ are inverse bijections. The transitive closure of the relation $\pers$ is an equivalence relation $\proj$ known as \emph{projective equivalence}. We denote by $\S_\pi$ the equivalence class of a segment $\S$ with respect to $\proj$.

More generally, consider
    $$\Delta = \{\hat{0} = x_0 \lneq x_1 \lneq \cdots \lneq x_n = \hat{1}\}$$
a finite chain in $\L$. We refer to the segments $[x_{i-1}, x_i]$ for $1 \leq i \leq n$ as the (successive) subsegments of $\Delta$. If each of these is a prime segment, then $\Delta$ is called a \textdef{maximal chain}. We say $\Delta$ is \textdef{up-down perspective} to another finite chain
    $$\Gamma = \{\hat{0} = y_0 \lneq y_1 \lneq \cdots \lneq y_m = \hat{1}\}$$
if $m = n$ and there exists a permutation $\sigma$ on $\{1,\ldots,n\}$ so that $[x_{i-1},x_i] \pers [y_{\sigma(i)-1},y_i]$ for each $i$. This allows us to state versions of both the Jordan--H\"older Theorem and Schreier Refinement Theorem for modular lattices of finite length. To make the statement precise, we refer to $\sigma$ above as a bijection which \textdef{induces an up-down perspectivity}. We also recall that a subset $\Delta \subseteq \L$ is called a \textdef{chain} if it is totally ordered under the relation inherited from $\L$.

\begin{theorem}[Jordan--H\"older and Schreier Theorems for modular lattices of finite length]\label{thm:JH_lattices}
    Let $\L$ be a modular lattice and suppose that every (maximal) chain in $\L$ is finite. Let $\Delta, \Gamma \subseteq \L$ be arbitrary chains. Then the following hold.
    \begin{enumerate}
        \item If $\Delta$ and $\Gamma$ are maximal, then there is a unique bijection $\sigma: \Delta \rightarrow \Gamma$ which induces an up-down perspectivity.
        \item There exist chains $\Delta', \Gamma' \subseteq \L$ which are up-down perspective and satisfy $\Delta \subseteq \Delta'$ and $\Gamma \subseteq \Gamma'$.
    \end{enumerate}
\end{theorem}

\begin{remark}\
    \begin{enumerate}
        \item Theorem~\ref{thm:JH_lattices} can more generally be formulated for (upper) \emph{semi}modular lattices of finite length. This amounts to weakening the modular law to instead only require that if $[x\wedge y,x]$ is a prime segment, then $[x,x\vee y]$ is also. At this level of generality, the uniqueness of $\sigma$ in Theorem~\ref{thm:JH_lattices} is due to \cite[Theorem~1.3]{CS} and the rest of the theorem is due to \cite[Theorem~2]{GN}.
        \item It is well-known that a lattice $\mathcal{L}$ is modular if and only if it is both upper and lower semimodular; i.e. if and only if both $\mathcal{L}$ and $\mathcal{L}^{op}$ are upper semimodular. Since our main results are based upon many self-dual assumptions (see e.g. Definition~\ref{JHSobjectsLattice}), we have thus chosen to work exclusively over modular lattices in the present paper.
    \end{enumerate}
\end{remark}

The uniqueness of the bijection $\sigma$ in Theorem~\ref{thm:JH_lattices}(1) is of particular note, as it is not present in the classical statement of the Jordan--H\"older Theorem for abelian categories. To clarify this distinction, and to describe the relationship between up-down perspectivity and subfactor equivalence, we need the following lemma.

\begin{lemma}\label{lem:proj}
    Let $\L$ be a modular lattice, and let $\S = [s_0,s_1]$ and $\T = [t_0,t_1]$ be segments of $\L$.
    Then $\S \pers\T$ if and only if (i) $t_1 \vee s_0 = s_1 \vee t_0$, (ii) $t_1 \wedge (t_0 \vee s_0) = t_0$, and (ii') $s_1 \wedge (t_0 \vee s_0) = s_0$.
\end{lemma}

\begin{proof}
    Suppose $\S\pers\T$. Then $\pi_\S|_\T$ is an order-preserving bijection, and so $(t_0 \vee s_0) \wedge s_1 = \pi_{\S}(t_0) = s_0$, (proving (ii')) and $(t_1 \vee s_0) \wedge s_1 = \pi_{\S}(t_1) = s_1$. In particular, $t_1 \vee s_0 \geq s_1$, and so $t_1 \vee s_0 \geq t_0 \vee s_1$. By symmetry, we also have that $(s_0 \vee t_0) \wedge t_1 = t_0$ and $t_1 \vee s_0 \leq t_0 \vee s_1$. This shows that (i) and (ii) both hold.
    
    Now suppose that (i), (ii), and (ii') all hold. Then
    \begin{align*}
        \pi_{\T}\circ \pi_{\S}(x) &= (((x \vee s_0) \wedge s_1)\vee t_0)\wedge t_1 &\\
        &= ((x \vee s_0) \wedge (s_1 \vee t_0)) \wedge t_1 & \text{since $t_0 \leq x \vee s_0$}\\
        &= (x\vee s_0) \wedge ((t_1 \vee s_0) \wedge t_1)) & \text{by (i)} \\
        &= (x \vee s_0) \wedge t_1 &\\
        &= (x \vee (t_0 \vee s_0)) \wedge t_1 & \text{since $t_0 \leq x$}\\
        &= x \vee ((t_0 \vee s_0) \wedge t_1) & \text{since $x \leq t_1$}\\
        &= x \vee t_0 & \text{by (ii)}\\
        &= x. &
    \end{align*}
    By symmetry, we conclude that $\pi_{\S}|_\T$ and $\pi_{\T}|_\S$ are inverses; that is, that $\S\pers\T$.
\end{proof}

As a consequence, we obtain the following.

\begin{proposition}\label{prop:subfactor_proj}
    Let $X$ be an object in $\mathcal{A}$, and let $[Y_0, Y_1]$ and $[Z_0, Z_1]$ be segments in $\Sub(X)$. Suppose $[Y_0,Y_1] \pers [Z_0,Z_1]$. Then $Y_1/Y_0 \cong Z_1/Z_0$.
\end{proposition}

\begin{proof}
    The result is a consequence of Lemma~\ref{lem:proj} and the second isomorphism theorem. Indeed, we have
    $$\frac{Y_1}{Y_0} \cong \frac{Y_1}{Y_1\cap(Z_0 + Y_0)} \cong \frac{Z_0 + Y_1}{Z_0 + Y_0} \cong \frac{Z_1 + Y_0}{Z_0 + Y_0} \cong \frac{Z_1}{Z_1 \cap (Z_0 + Y_0)} \cong \frac{Z_1}{Z_0}$$
    where the first, third, and fifth isomorphisms are due to conditions (ii), (i), and (ii') in Lemma~\ref{lem:proj} and the other two isomorphisms are due to the second isomorphism theorem.
\end{proof}

As a special case, and as a motivating example for the definition of up-down perspective, we recall that the Diamond Isomorphism Theorem says that for any $x, y \in \L$, one has $[x \wedge y, x] \pers [y, y \vee x]$. Combining this with Proposition~\ref{prop:subfactor_proj} yields precisely the second isomorphism theorem.

We conclude with an example which highlights both (i) the fact that the converse of Proposition~\ref{prop:subfactor_proj} does not hold, and (ii) that applying Theorem~\ref{thm:JH_lattices} to the lattice $\Sub(X)$ results in a stronger statement than Theorem~\ref{thm:jh}.

\begin{example}\label{ex:subfact_vs_proj}
    Suppose $\mathcal{A}$ contains a simple object $S$. We consider the object $X = S\oplus S$ and subobjects $Y_1 = (S, \iota_1)$ and $Z_1 = (S, \iota_2)$, where $\iota_1$ and $\iota_2$ are the inclusion maps. Then $X$ has composition series
    $\Delta = \{0 = Y_0 \subobneq Y_1 \subobneq Y_2 = X\}$ and $\Gamma = \{0 = Z_0 \subobneq Z_1 \subobneq Z_2 = X\}$. Now let $\sigma_1, \sigma_2: \{1,2\} \rightarrow \{1,2\}$ be the identity permutation and the transposition (1,2), respectively. Then for $i, j \in \{1,2\}$, we have $Y_i/Y_{i -1} \cong Z_{\sigma_j(i)}/Z_{\sigma_j(i)-1}$, as desired. However, we claim that only $\sigma_2$ induces an up-down perspectivity. Indeed, if $\sigma_1$ induced an up-down perspectivity, Lemma~\ref{lem:proj} and the assumption that $[Y_1,X] \pers [Z_1,X]$ would imply $X = X \cap (Y_1 + Z_1) = Y_1$, a contradiction. Note in particular that this also shows that the chains $\{0 \subobneq Y_1\}$ and $\{0 \subobneq X_1\}$ are subfactor equivalent, but not up-down perspective. On the other hand, $\sigma_2$ does induce an up-down perspectivity. For example, we have $X + 0 = Z_1 + Y_1$, $X \cap (Y_1 + 0) = Y_1$, and $Z_1 \cap (Y_1 + 0) = 0$. By Lemma~\ref{lem:proj}, this shows that $[Y_1,X] \pers [0,Z_1]$. 
\end{example}

%%%%%%%%%%%%%%%%%%%%%%%%%%%%%%%%%%%%%%%%%%%%%%%%%

\subsection{Functor categories}\label{sec:fun cats}
We now discuss categories of functors with target in an abelian category. We again refer to~\cite{abelian lecture notes} for more details.

Let $\mathcal{C}$ be a small category, and recall that $\Cat$ is an abelian category. Then the category $\Fun(\mathcal{C},\Cat)$ of covariant functors from $\mathcal{C}$ to $\Cat$ is once again abelian. This is sometimes referred to as the category of $\Cat$-representations of $\mathcal{C}$.

We recall that colimits and limits can be computed ``pointwise'' in functor categories, in the following sense. Let $\mathfrak{D} = (\mathcal{F},\mathcal{N})$ be a diagram in $\Fun(\mathcal{C},\Cat$) (with set of objects $\mathcal{F}$ and set of morphisms $\mathcal{N}$). For $X$ an object of $\Cat$, define 
$$C(X) := \colim(\{M(X)|M \in \mathcal{F}\},\{\eta_X|\eta \in \mathcal{N}\})$$
if this colimit exists. If $C(X)$ exists for all objects $X$, then for each morphism $X\xrightarrow{f} Y$ in $\mathcal{C}$, there is a natural morphism $C(X)\xrightarrow{C(f)}C(Y)$. This makes $C$ into a functor from $\mathcal{D}$ to $\Cat$, and we have $C = \colim \mathfrak D$. The limit of $\mathfrak{D}$ is computed similarly (when all of the pointwise limits exist).

To conclude this section, we recall known results about the category $\Fun(\mathbb{R},\veck)$ where $\field$ is a field and $\mathbb{R}$ is the category with objects the real numbers and precisely one morphism $x \rightarrow y$ if and only if $x \leq y$. The category $\Fun(\mathbb{R},\veck)$ is often referred to as the category of  \textdef{pointwise finite-dimensional ($\mathbb{R}$-)persistence modules}. It serves as the basis for many of our examples, as well as a large part of our motivation.

We now review results about certain indecomposable objects in $\Fun(\mathbb{R},\veck)$ and the morphisms between them. These results are discussed, and their proofs outlined, by Gabriel and Ro\u{\i}ter \cite[Section~3.6]{GR}, and were proven explicitly by Crawley-Boevey~\cite{C-B}. Much of the theory also extends to the case where $\mathbb{R}$ is given a partial order in place of its standard order. See~\cite{BC-B,GR,Continuous A 1}. In Section~\ref{sec:ITreps}, we will also consider a certain subcategory of $\Fun(\mathbb{R},\veck)$ which was first studied by Igusa and Todorov~\cite{IT15}.

For $M \in \Fun(\mathbb{R},\veck)$ and $x \leq y$ in $\mathbb{R}$, we denote by $M(x,y)$ the result of applying $M$ to the unique morphism $x\rightarrow y$.

For every (open, closed, or half-open) interval $I\subseteq\mathbb{R}$ and for all $x,y \in \mathbb{R}$ with $x \leq y$, denote
        \begin{equation}\label{eqn:intIndec} M_I(x) = \begin{cases}\field & x \in I\\0& x \notin I\end{cases} \qquad\qquad M_I(x,y) =
        \begin{cases}1_{\field} &x\leq y \in I\\0 & \text{otherwise}.\end{cases}\end{equation}
        The functor $M_I$ is often referred to as the \textdef{interval indecomposable representation} associated to $I$. Up to isomorphism, these are precisely the indecomposable objects in $\Fun(\mathbb{R},\veck)$. Moreover, for two intervals $I,J$, there is a monomorphism $M_I\rightarrow M_J$ in $(\mathbb{R},\veck)$ if and only if (a) $I \subseteq J$, (b) the right endpoint of $I$ is the same as the right endpoint of $J$, and (c) the intervals are either both open on the right or both closed on the right. Moreover, if these conditions are met then this monomorphism is unique up to scalar multiplication.
        
Finally, let $\Delta$ be a set of intervals in $\mathbb{R}$ which is totally ordered with respect to the relation $I\preceq J$ if there is a monomorphism $M_I\rightarrow M_J$. Then we have
\begin{equation}\label{eqn:intLims}\bigvee_{I \in \Delta}M_I = M_{\left(\bigcup_{I \in \Delta} I\right)}\qquad\qquad \bigwedge_{I \in \Delta}M_I = M_{\left(\bigcap_{I \in \Delta} I\right)}.\end{equation}

\begin{remark}\label{rem:sums}
    Note that Equation~\ref{eqn:intLims} holds even if $\Delta$ is infinite. In particular, $\bigvee_{I \in \Delta} M_I$, which may reasonably also be denoted $\sum_{I \in \Delta} M_I$, exists in the category $\Fun(\mathbb{R},\veck)$, even though $\bigoplus_{I \in \Delta} M_I$ may not. As an explicit example, take $\Delta = \{[a,1] \mid 0 \leq a \leq 1\}$. Then one can form $\bigoplus_{I \in \Delta} M_I$ in the category of functors from $\mathbb{R}$ to \emph{arbitrary} $\field$-vector spaces, but this functor will not be pointwise finite-dimensional. On the other hand, one has $\bigvee_{I \in \Delta} M_I = M_{[0,1]}$, which is pointwise finite-dimensional.
\end{remark}

\section{Chains and Composition Series}\label{sec:subobjectChains}

The purpose of this section is to introduce composition series and ``prime chains'' in lattices which do not have finite length. In Section~\ref{sec:subobjectLims}, we study completeness and cocompleteness properties on chains. In Section~\ref{sec:compSeries}, we use these properties to give formulate generalized definitions of composition series, filtrations, and ``prime chains''. Finally, in section~\ref{sec:existence}, we prove Theorem~\ref{thm:intro:compExists}, which shows that, in any bounded complete lattice, prime chains (or composition series) and maximal chains coincide.

 Unless otherwise stated, our conventions in this section are that $\L = (\L,\leq)$ denotes a bounded lattice, that $\Cat$ denotes a well-powered abelian category, and that $X$ denotes an object in $\Cat$. Readers interested only in applications to abelian categories may freely take $\L = \Sub(X)$.

%%%%%%%%%%%%%%%%%%%%%%%%%%%%%%%%%%%%%%%%%%%%%

\subsection{Completeness and Cocompleteness of Chains}\label{sec:subobjectLims}

In this section, we study completeness and cocompleteness properties for chains in $\L$. We recall that a subset $\Delta \subseteq \L$ is called a \textdef{chain} if it is totally ordered under the partial order inherited from $\L$. We will sometimes refer to a chain in $\Sub(X)$ as a \textdef{subobject chain}. In any case, we say a chain $\Delta$ is \textdef{spanning} if it contains both $\hat{0}$ and $\hat{1}$.

\begin{remark}
    In general, our framework will require that each chain in $\L$ admits a supremum and an infimum. This is well-known to be equivalent to the assumption that $\L$ is complete (and thus also cocomplete). For convenience, we give a proof of this fact in the appendix (Proposition~\ref{prop:complete}).
\end{remark}

As many of our examples come from objects in abelian categories, we state the following definition.

\begin{definition}\label{def:subobjectComplete}
    Let $X$ be an object in $\mathcal{A}$. We say that $X$ is \textdef{subobject complete} if the lattice $\Sub(X)$ is complete.
\end{definition}

\begin{remark}\
    \begin{enumerate}
        \item If $\mathcal{A}$ has arbitrary coproducts, then $\Sub(X)$ is complete. Moreover, in this case we know that $\bigvee \Delta = \sum_{Y \in \Delta} Y$ and $\bigwedge \Delta = \bigcap_{Y \in \Delta} Y$ for any $\Delta \subseteq \mathcal{A}$. On the other hand, even if $\mathcal{A}$ does not have arbitrary coproducts, it is still possible that the lattice $\Sub(X)$ will be complete. See Remark~\ref{rem:sums} as an example. Because of this, we prefer to use the notations $\bigvee \Delta$ and $\bigwedge \Delta$ in place of $\sum \Delta$ and $\bigcap \Delta$, respectively.
        \item We defer discussion regarding the relationship between subobject completeness the existence of categorical (co)limits to Section~\ref{sec:limits_colimits}.
        \item Observe that, by Proposition~\ref{prop:subQuotients}, 
        subobject-completeness is a self-dual notion. That is, $X$ is subobject complete in $\Cat$ if and only if it is subobject complete in $\Cat^{op}$. See also Remark~\ref{rem:selfDual}
    \end{enumerate}
\end{remark}

Before proceeding, we fix the following notational convention. Suppose $\Delta' \subseteq \Delta \subseteq \L$. When we write $\bigvee \Delta'$ (resp. $\bigwedge \Delta'$), we will always mean the supremum (respectively infimum) of $\Delta'$ computed in $\L$. Indeed, Example~\ref{ex:sup_inf} below shows that $\Delta'$ may admit a supremum or infimum in $\Delta$ which differs from the one computed in $\L$. If we wish to refer to such an object, we will always refer to it in words as the supremum (respectively infimum) of $\Delta'$ in $\Delta$.

\begin{example}\label{ex:sup_inf}
    Consider the lattice $\L = \N \cup \{\alpha,\beta\}$, where we set $n < \alpha < \beta$ for all $n \in \N$. Let $\Delta = \N \cup \{\beta\}$ and $\Delta' = \N$. Then the supremum of $\Delta'$ in $\L$ is $\bigvee \Delta' = \alpha \notin \Delta'$. On the other hand, $\beta$ is the supremum of $\Delta'$ in $\Delta$. Moreover, if we replaced the element $\alpha$ with a copy of the opposite poset of $\mathbb{N}$, we would have that $\bigvee \Delta'$ does not exist while $\beta$ is still the supremum of $\Delta'$ in $\Delta$.
\end{example}

We are now prepared to state the main definition of this section.

\begin{definition}\label{def:bicompleteChain}
    Let $\emptyset \neq \Delta \subseteq \L$ be a nonempty subset.
    \begin{enumerate}
        \item We say that $\Delta$ is \textdef{complete} if for any $\emptyset \neq \Delta' \subseteq \Delta$, the supremum $\bigvee \Delta'$ exists (in $\mathcal{L})$ and is an element of $\Delta$.
        \item We say that $\Delta$ is \textdef{cocomplete} if for any $\emptyset \neq \Delta' \subseteq \Delta$, the infimum $\bigwedge \Delta'$ exists (in $\mathcal{L})$ and is an element of $\Delta$.
        \item We say that $\Delta$ is \textdef{bicomplete} if it is both complete and cocomplete.
    \end{enumerate}
\end{definition}

We emphasize that with this definition, the subset $\Delta$ in Example~\ref{ex:sup_inf} is \emph{not} complete, even though every nonempty subset of $\Delta$ admits a supremum in $\Delta$. On the other hand, this subset is cocomplete since it is well-ordered. In particular, when $\Delta \neq \mathcal{L}$, the definitions of complete and cocomplete do not necessarily coincide.

An immediate consequence of Definition~\ref{def:bicompleteChain} is the following.

\begin{proposition}\label{prop:bicompleteIsLattice}\
    \begin{enumerate}
        \item Suppose $\mathcal{L}$ is complete, and let $\emptyset \neq \Delta \subseteq \L$ be bicomplete. Then $\Delta$ is a complete bounded sublattice of $\L$. In particular,  if $\L$ is modular then $\Delta$ is a complete bounded modular lattice.
        \item Let $X$ be a subobject complete object in $\Cat$. Then every subquotient of $X$ is subobject bicomplete.
    \end{enumerate}
\end{proposition}

\begin{proof}
    It suffices to prove (1), as (2) will then follow from (1) and Proposition~\ref{prop:subQuotients}. Thus suppose $\L$ is complete and let $\emptyset\neq \Delta \subseteq \L$ be bicomplete. It is clear from the definition that $\Delta$ is a complete sublattice of $\L$. Moreover, by assumption we have $\bigvee \Delta, \bigwedge \Delta \in \Delta$, which means $\L$ is bounded.
\end{proof}

We conclude this section with several examples.

\begin{example}\label{ex:subobjectChains}\
\begin{enumerate}
    \item If $|\Delta| < \infty$, then $\Delta$ is bicomplete.
    \item If $\Delta$ is a segment, then $\Delta$ is bicomplete.
    \item If $\mathcal{A}$ admits arbitrary coproducts, then every object of $\mathcal{A}$ is subobject complete. Indeed, in this case we recall that supremums coincide with sums and infimums correspond with intersections.
    \item Consider $\mathbb{Z}$ as a $\mathbb{Z}$-module. For $p$ a prime, define
        $$\Delta_p = \{0\mathbb{Z}\} \cup \left\{p^\alpha \mathbb{Z}\mid \alpha \in \mathbb{N}\right\} \subseteq \Sub(\mathbb{Z}).$$
        with the natural inclusion maps. Then $\Delta_p$ is a bicomplete subobject chain of $\mathbb{Z}$.
    \item Consider the category $\Fun(\mathbb{R},\veck)$ for some field $\field$.
        We define three subobject chains of $M_{(0,1)}$:
        \begin{eqnarray*}
            \Delta_1 &=& \left\{M_{(a,1)} \mid a \in [0,1)\right\}\cup\{0\}\\
            \Delta_2 &=& \left\{M_{[a,1)} \mid a \in (0,1)\right\}\cup\left\{0,M_{(0,1)}\right\}\\
            \Delta_3 &=& \Delta_1\cup\Delta_2.
        \end{eqnarray*}
        (See Equation~\ref{eqn:intIndec} for an explanation of the notation $M_{(a,1)}$ etc.)
        We note that as posets, $\Delta_1$ and $\Delta_2$ are isomorphic to $[0,1]$ and $\Delta_3$ is isomorphic to the lexicographical order on $[0,1]\times\{0,1\}$ without the elements $(0,0)$ and $(1,1)$.
        
        Now for $i \in \{1,2,3\}$ and $\Delta'\subseteq\Delta_i$, let
        $$C = \bigcup_{M_I \in \Delta'}I,\qquad L = \bigcap_{M_I \in \Delta'}I$$
        and note that $\colim\Delta' = M_C$ and $\lim\Delta' = M_L$ (see Equation~\ref{eqn:intLims}). Since the intervals defining $\Delta_1$ are closed under unions but not intersections, this means $\Delta_1$ is a complete subobject chain of $M_{(0,1)}$ which is not cocomplete. Likewise, $\Delta_2$ is a cocomplete subobject chain of $M_{(0,1)}$ which is not complete, and $\Delta_3$ is a bicomplete subobject chain of $M_{(0,1)}$.
    
\end{enumerate}
\end{example}

\subsection{Subfactor Multisets, Successive Subsegments, and Composition Series}\label{sec:compSeries}

In this section, we generalize the definition of a composition series to include subobject chains of arbitrary cardinality. We continue to primarily work in the framework of bounded lattices (in which case we use the term ``prime chain'' in place of ``composition series''), but most of our examples will come from objects in abelian categories.

As a starting point, this requires us to precisely describe the (successive) subsegments of a chain. To that end, we fix the following notation.

\begin{notation}\label{not:pred}
    Let $\Delta$ be a subset of $\L$. For $y \in \Delta$, we denote by
    \begin{eqnarray*}
        y^-_\Delta &:=& \bigvee\{z \in \Delta \mid  z \lneq y\} \\
        y^+_\Delta &:=& \bigwedge\{z \in \Delta \mid  y \lneq z\}
    \end{eqnarray*}
    when these supremums and infimums exist. In particular if $\hat{0} \in \Delta$, we have $\hat{0}^-_{\Delta} = \hat{0}$ and if $\hat{1} \in \Delta$, we have $\hat{1}^+_\Delta = \hat{1}$. When the subset $\Delta$ can be inferred, we will sometimes write $y^-$ for $y^-_\Delta$ and $y^+$ for~$y^+_\Delta$.
\end{notation}

One can consider $y_\Delta^-$ and $y_\Delta^+$ as the ``predecessor'' and ``successor'' of $y$ in $\Delta$, respectively. For example, suppose $\Delta$ is bicomplete (and so $y^-_\Delta$ and $y^+_\Delta$ both exist for every $y \in \Delta$). If $\Delta$ is a continuous well-ordered chain, then $y^+_\Delta$ will truly be the successor of $y$ in $\Delta$. (This is the approach used to prove ``Jordan--H\"older-like'' and ``Schreier-like'' theorems for well-ordered subobject chains in \cite{Transfinite}.) The case where $y^+_\Delta = y$ similarly corresponds to when $y$ has no successor. Example~\ref{ex:factorMultisets}(3) shows an example of this.

The following definition extends this generalization of predecessors and successors to a generalization of what is meant by the successive subsegments of an arbitrary chain. Recall that a multiset is allowed to contain multiple distinct copies of the same element.

 \begin{definition}\label{def:factorMultiset}\
        \begin{enumerate}
        \item Let $\Delta \subseteq \L$ be a complete chain. Then the \textdef{lower subsegment set} of $\Delta$ is
        $$\seg^-(\Delta) := \left\{[y^-_\Delta,y] \mid  y \in \Delta, y \neq y^-_\Delta\right\}$$
        and the \textdef{lower $\pi$-subsegment multiset} is $\seg_\pi^-(\Delta) := \left\{\mathcal{S}_\pi \mid \mathcal{S} \in \seg^-(\Delta)\right\}$.
        \item Let $\Delta \subseteq \L$ be a cocomplete chain. Then the \textdef{upper subsegment set} of $\Delta$ is
        $$\seg^+(\Delta) := \{[y, y^+_\Delta] \mid y \in \Delta, y \neq y^+_\Delta\}$$
        and the \textdef{upper $\pi$-subsegment multiset} is $\seg_\pi^+(\Delta) := \{\mathcal{S}_\pi \mid \mathcal{S} \in \seg^+(\Delta)\}.$
    \end{enumerate}
\end{definition}

\begin{remark}
    In the proof of Lemma~\ref{lem:unique_proj}, we will show that if $\S$ and $\T$ are distinct segments in $\cov^-(\Delta)$ or $\cov^+(\Delta)$, then there does not exist a segment $\mathcal{R} \subseteq \L$ such that $\S \pers \mathcal{R}$ and $\mathcal{R} \pers \T$. It is not, however, immediately clear whether it is possible to have $\S \proj \T$. This is why we consider $\cov_\pi(\L)$ as a multiset rather than as a set.
\end{remark}

Similarly, in the case $\L = \Sub(X)$ we can define the following.

\begin{definition}\label{def:factorMultiset2}\
    \begin{enumerate}
        \item Let $X$ be an object of $\Cat$ and let $\Delta$ be a complete subobject chain of $X$. Then the \textdef{lower subfactor multiset} of $\Delta$ is
        $$\fac^-(\Delta) := \left\{Y/Y^-_\Delta \mid [Y^-_\Delta, Y] \in \seg^-(\Delta)\right\}.$$
        \item Let $X$ be an object of $\Cat$ and let $\Delta$ be a cocomplete subobject chain of $X$. Then the \textdef{upper subfactor multiset} of $\Delta$ is
        $$\fac^+(\Delta) := \left\{Y^+_\Delta/Y \mid [Y, Y^+_\Delta] \in \seg^+(\Delta)\right\}.$$
    \end{enumerate}
\end{definition}

\begin{example}\label{ex:factorMultisets}\
    \begin{enumerate}
        \item Let $X$ have finite length and let $\Delta$ be a composition series of $X$. Then $\Delta$ is bicomplete and $\fac^-(\Delta) = \fac^+(\Delta)$ is the multiset of composition factors of $X$ (with multiplicity).
        \item For $p$ a prime, let $\Delta_p$ be as in Example~\ref{ex:subobjectChains}(2). Then $\fac^+(\Delta_p) = \fac^-(\Delta_p)$ consists of countably many copies of $\mathbb{Z}/p\mathbb{Z}$.
        \item Let $\Delta_1,\Delta_2$, and $\Delta_3$ be as in Example~\ref{ex:subobjectChains}(3).
         Now for $a \in (0,1)$, we have
         \begin{eqnarray*}
            \left(M_{(a,1)}\right)^-_{\Delta_1} &=& \bigvee\left\{M_{(b,1)} \mid  b \in (a,1)\right\} = M_{(a,1)}\\
            \left(M_{(a,1)}\right)^-_{\Delta_3} &=& \bigvee\left\{M_{(b,1)},M_{[b,1)} \mid b \in (a,1)\right\} = M_{(a,1)}\\
            \left(M_{[a,1)}\right)^-_{\Delta_3} &=& \bigvee
            \left(\left\{M_{(b,1)},M_{[b,1)} \mid b \in (a,1)\right\}\cup\left\{M_{(a,1)}\right\}\right) = M_{(a,1)}
         \end{eqnarray*}
        This shows that $\fac^-\Delta_1 = \emptyset$. and $\fac^-\Delta_3 = \{M_{[a,a]} \mid  a \in (0,1)\}$. A similar argument shows that $\fac^+\Delta_2 = \emptyset$ and $\fac^+\Delta_3 = \fac^-\Delta_3$.
    \end{enumerate}
\end{example}

We note that for those subobject chains in Example~\ref{ex:factorMultisets} which are bicomplete, the upper and lower subfactor multisets coincide. The following shows that this is completely general.

\begin{proposition}\label{prop:upperLowerFactors}
    Let $\Delta \subseteq \L$ be a bicomplete chain.
    \begin{enumerate}
        \item If $y \in \Delta$ and $y \neq y^-$, then $(y^-)^+ = y$.
        \item If $y \in \Delta$ and $y\neq y^+$, then $(y^+)^- = y$.
        \item There is an equality of sets $\seg^-(\Delta) = \seg^+(\Delta)$.
        \item If $\L = \Sub(X)$, then there is an equality of multisets $\fac^-(\Delta) = \fac^+(\Delta)$.
    \end{enumerate}
\end{proposition}

\begin{proof}
   (1) Let $\Omega = \{y' \in \Delta \mid y^- \lneq y'\}$. By assumption, we have that $y \in \Omega$. Moreover, if $y'' \in \Delta$ with $y'' \lneq y$, then $y'' \leq y^-$ by definition. We conclude that $y = \bigvee \Omega = (y^-)^+$. The proof of (2) is dual.
   
   (3) Let $[y^-,y] \in \seg^-(\Delta)$. Then $y^- \in \Delta$ by assumption and $[y^-,y] = [y^-,(y^-)^+] \in \seg^+(\Delta)$ by~(1). The reverse inclusion analogously follows from (2).
   
   Finally, (4) is an immediate consequence of (3).
\end{proof}

\begin{notation}
From now on, when the hypotheses of Proposition \ref{prop:upperLowerFactors} are satisfied, we will write $\seg(\Delta)$ instead of $\seg^-(\Delta)$ or $\seg^+(\Delta)$. Analogously, we will write $\fac(\Delta)$ instead of $\fac^-(\Delta)$ or $\fac^+(\Delta)$. We emphasize that while $\seg(\Delta)$ is a set, $\fac(\Delta)$ is a multiset.
\end{notation}

We are now ready to define generalized notions of composition series and filtrations over a subcategory. To help motivate the lattice-theoretical versions, we give the versions for objects in abelian categories first.

\begin{definition}\label{def:filtration}
Let $\mathcal{D}$ be a subcategory of $\Cat$ which is closed under isomorphisms. Let $X$ be an object of $\Cat$ and let $\Delta$ be a spanning subobject chain of $X$.
    \begin{enumerate}
        \item We say that $\Delta$ is a \textdef{lower $\mathcal{D}$-filtration} if $\Delta$ is complete and every object in $\fac^-(\Delta)$ is in $\mathcal{D}$. If in addition every object in $\mathcal{D}$ is simple in $\Cat$, we say that $\Delta$ is a \textdef{lower composition series}.
        \item We say that $\Delta$ is an \textdef{upper $\mathcal{D}$-filtration} if $\Delta$ is cocomplete and every object in $\fac^+(\Delta)$ is in $\mathcal{D}$. If an addition every object in $\mathcal{D}$ is simple in $\Cat$, we say that $\Delta$ is an \textdef{upper composition series}.
        \item We say that $\Delta$ is a \textdef{$\mathcal{D}$-filtration} (respectively a \textdef{composition series}) if $\Delta$ is subobject bicomplete and is a lower $\mathcal{D}$-filtration (respectively a lower composition series).
    \end{enumerate}
\end{definition}

\begin{definition}\label{def:filtration_lattice}
    Let $\mathfrak{S} \subseteq 2^\L$ be a set of segments which is closed under up-down perspectivity, and let $\Delta \subseteq \L$ be a spanning chain.
    \begin{enumerate}
        \item We say that $\Delta$ is a \textdef{lower $\mathfrak{S}$-filtration} if $\Delta$ is complete and $\seg^-(\Delta) \subseteq \mathfrak{S}$. If in addition every segment in $\mathfrak{S}$ is a prime, we say that $\Delta$ is a \textdef{lower prime chain}.
        \item We say that $\Delta$ is an \textdef{upper $\mathfrak{S}$-filtration} if $\Delta$ is cocomplete and $\seg^+(\Delta) \subseteq \mathfrak{S}$. If in addition every segment in $\mathfrak{S}$ is prime, we say that $\Delta$ is an \textdef{upper prime chain}.
        \item We say that $\Delta$ is a \textdef{$\mathfrak{S}$-filtration} (respectively a \textdef{prime chain}) if $\Delta$ is bicomplete and is a lower $\mathfrak{S}$-filtration (respectively a lower prime chain).
    \end{enumerate}
\end{definition}

\begin{remark}
    As an immediate consequence of Proposition~\ref{prop:upperLowerFactors}, we see that a bicomplete subobject chain $\Delta$ is a $\mathcal{D}$-filtration (respectively a composition series) if and only if it is an upper $\mathcal{D}$-filtration (respectively an upper composition series). That is, we could replace both instances of ``lower'' with ``upper'' in Definition~\ref{def:filtration}(3) without actually changing the definitions. The analogous result holds for filtrations and prime chains in $\L$ as well.
\end{remark}

\begin{example}\label{ex:compSeries}
 All of the subobject chains in Example~\ref{ex:factorMultisets} are (lower or upper) composition series. In particular, (finite) composition series in the traditional sense are also composition series under Definition~\ref{def:filtration}. More generally, if $\mathcal{L}$ has finite length, then a subset $\Delta \subseteq \mathcal{L}$ is a prime chain if and only if it is a maximal chain.
\end{example}

\begin{remark}\label{rem:selfDual}
    As a consequence of Proposition~\ref{prop:subQuotients}, all of the notions in this section are self-dual in the following sense. Let $\Delta \subseteq \Sub(X)$, and denote $\nabla := \{X/Y \mid Y \in \Delta\} \subseteq \Sub^{op}(X)$. (Here, $\Sub^{op}(X)$ denotes the category of subobjects of $X$ in $\mathcal{A}^{op}$, or alternatively the category of quotient objects of $X$ in $\mathcal{A}$.) Then $\Delta$ is complete if and only if $\nabla$ is cocomplete and vice versa. Moreover, if $\Delta$ is complete then $\fac^-(\Delta) = \fac^+(\nabla)$. In particular, every lower (respectively upper) $\mathcal{D}$-filtration or composition series $\Delta \subseteq \Sub(X)$ determines an upper (respectively lower) $\mathcal{D}^{op}$-filtration or composition series $\nabla \subseteq \Sub^{op}(X)$ which has the same subfactors, and vice versa.
\end{remark}

Finally, we state the following immediate consequence of the definitions.

\begin{proposition}\label{prop:comp_cover}
    Let $X$ be an object of $\mathcal{A}$, and let $\Delta \subseteq \Sub(X)$ be a subobject chain. Let $\mathcal{D}$ be a subcategory of $\mathcal{A}$ which is closed under isomorphisms, and let $\mathfrak{S} \subseteq 2^\L$ be the set of segments $[Y_0,Y_1] \subseteq \Sub(X)$ for which $Y_1/Y_0 \in \mathcal{D}$. Then $\Delta$ is a (lower or upper) $\mathcal{D}$-filtration if and only if it is a (lower or upper) $\mathfrak{S}$-filtration. In particular, $\Delta$ is a (lower or upper) composition series if and only if it is a (lower or upper) prime chain.
\end{proposition}

%%%%%%%%%%%%%%%%%%%%%%%%%%%%%%%%%%%%%%%%%%%%%%%%%%%%%%%%%%

\subsection{The Existence of Composition Series and Prime Chains}\label{sec:existence}

We now turn towards determining when a bounded lattice $\L$ admits a prime chain. As we shall see in Theorem~\ref{prop:compExists}, only completeness needs to be assumed in order for the prime chains and maximal chains of $\L$ to coincide.

While we state many of the definitions and results of this section for an arbitrary subset $\Delta \subseteq \L$, we will usually be most interested in the case where $\Delta$ is a chain.

\begin{definition}\label{def:refinement}
    Let $\Delta$ and $\Delta'$ be subsets of $\L$.
    We say that $\Delta'$ is a \textdef{refinement} of $\Delta$ if $\Delta \subseteq \Delta'$ and that $\Delta'$ is a \textdef{proper} refinement of $\Delta$ if $\Delta\subsetneq \Delta'$.
\end{definition}

Note that by definition, a chain $\Delta \subseteq \L$ is maximal if and only if there does not exist a chain $\Delta' \subseteq \L$ which is a proper refinement of $\Delta$.

We now show that when $\L$ is bicomplete (respectively cocomplete, complete), there is a natural way to refine an arbitrary subset of $\L$ into a bicomplete (respectively cocomplete, complete) subset.

\begin{lemma}\label{lem:completions}
    Suppose $\L$ is complete, and let $\{\Delta_\alpha\}_\alpha \subseteq 2^\L$. If each $\Delta_\alpha$ is complete (respectively cocomplete, bicomplete) and $\bigcap_\alpha \Delta_\alpha \neq \emptyset$, then $\bigcap_\alpha \Delta_\alpha$ is complete (respectively cocomplete, bicomplete).
\end{lemma}

\begin{proof}
    We prove the case for each $\Delta_\alpha$ complete, as the other cases follow analogously. Let $\emptyset \neq \Gamma \subseteq \bigcap_\alpha \Delta_\alpha$. The for any $\alpha$, the assumption that $\Delta_\alpha$ is complete means $\bigvee \Gamma \in \Delta_\alpha$. It follows that $\bigvee \Gamma \in \bigcap_\alpha \Delta_\alpha$, and so $\bigcap_\alpha \Delta_\alpha$ is complete.
\end{proof}

Based on Lemma~\ref{lem:completions}, we have the following definition.

\begin{definition}\label{def:bicompletion}
    Suppose $\L$ is complete and let $\emptyset \neq \Delta \subseteq \L$. Let $\mathfrak{B}(\Delta)$ be the set of bicomplete subsets of $\L$ which contain $\Delta$. We define the \textdef{bicompletion} of $\Delta$ to be
    $$\overline{\Delta} := \bigcap_{\Gamma \in \mathfrak{B}(\Delta)} \Gamma.$$
    We define the \textdef{completion} $\overline{\Delta}_\uparrow$ and the \textdef{cocompletion} $\overline{\Delta}_\downarrow$ of $\Delta$ analogously.
\end{definition}

Since $\L$ is complete (and thus bicomplete), we have $\Delta \subseteq \overline{\Delta}$ By Lemma~\ref{lem:completions}, we thus have that $\overline{\Delta}$ is bicomplete and that any bicomplete set containing $\Delta$ must also contain $\overline{\Delta}$. The analogous statements hold for $\overline{\Delta}_\uparrow$ and $\overline{\Delta}_\downarrow$ as well.

In the special case that $\Delta$ is a chain, we can also describe $\overline{\Delta}$ more explicitly.

\begin{proposition}\label{prop:bicompletion}
    Suppose $\L$ is complete, and let $\Delta$ be a chain in $\L$. Then $\overline{\Delta}$, $\overline{\Delta}_\uparrow$, and $\overline{\Delta}_\downarrow$ are also chains, and are given by
    \begin{eqnarray*}
        \overline{\Delta}_{\phantom{\uparrow}} &=& \left\{\bigvee \Gamma, \bigwedge \Gamma \ \middle| \ \Gamma \subseteq \Delta\right\},\\
        \overline{\Delta}_\uparrow &=& \left\{\bigvee \Gamma \ \middle| \ \Gamma \subseteq \Delta\right\},\\
        \overline{\Delta}_\downarrow &=& \left\{\bigwedge \Gamma \ \middle| \ \Gamma \subseteq \Delta\right\}.
    \end{eqnarray*}
\end{proposition}

\begin{proof}
    We prove the result only for $\overline{\Delta}$, as the other cases are similar.
    Let $\Theta = \left\{\bigvee \Gamma, \bigwedge \Gamma \ \middle| \ \Gamma \subseteq \Delta\right\}$. It is clear from the construction that $\Theta \subseteq \overline{\Delta}$. Thus it suffices to show that $\Theta$ is a bicomplete chain.
    
    To see that $\Theta$ is a chain, we note that by construction every element of $\Theta$ is either the supremum or infimum of some chain $\Gamma \subseteq \Delta$. Thus let $\Gamma, \Gamma'$ be two such chains.
    
    We first show that $\bigvee \Gamma$ and $\bigvee \Gamma'$ are comparable. Indeed, if there exists $y \in \Gamma$ such that $y' \leq y$ for all $y' \in \Gamma'$, then we have $\bigvee \Gamma' \leq y \leq \bigvee \Gamma$. Otherwise, by symmetry, we may assume that for all $y \in \Gamma$, there exists $y' \in \Gamma'$ with $y \subseteq y'$ and vice versa. In this case, it is straightforward to see that $\bigvee \Gamma = \bigvee \Gamma'$.
    
    The argument that $\bigwedge \Gamma$ and $\bigwedge \Gamma'$ are comparable is analogous. Therefore consider $\bigvee \Gamma$ and $\bigwedge \Gamma'$. Now if there exist $y' \in \Gamma'$ and $y \in \Gamma$ with $y' \leq y$, then $\bigwedge \Gamma' \leq y' \leq y \leq \bigvee \Gamma$ and we are done. Otherwise, for all $y' \in \Gamma'$ and $y \in \Gamma$, we have $y \leq y'$. It follows that $\bigvee \Gamma \leq \bigwedge \Gamma'$. We conclude that $\Theta$ is a chain.
    
    It remains only to show that $\Theta$ is bicomplete. Let $\widetilde{\Gamma} \subseteq \Theta$. Then $\bigvee \widetilde{\Gamma}$ and $\bigwedge \widetilde{\Gamma'}$ exist since $\L$ is complete. Now denote
    $$\Gamma := \left\{y \in \Delta \ \middle| \ \bigwedge \widetilde{\Gamma} \leq y \leq \bigvee \widetilde{\Gamma}\right\}.$$
    
    First suppose that $\Gamma = \emptyset$. We claim that, in this case, $|\widetilde{\Gamma}| \leq 2$. Indeed, suppose there exist subsets $\Gamma_1, \Gamma_2 \subseteq \Delta$ with $\bigvee \Gamma_1, \bigvee \Gamma_2 \in \widetilde{\Gamma}$. Since $\Theta$ is a chain, we assume without loss of generality that $\bigvee \Gamma_1 \leq \bigvee \Gamma_2$. Now if $\bigvee \Gamma_1 < \bigvee \Gamma_2$, the fact that $\Delta$ is a chain means there exists $y \in \Gamma_2 \setminus \Gamma_1$ such that $\bigvee \Gamma_1 \leq y \leq \bigvee\Gamma_2$. On the other hand, we have that $\bigvee \Gamma_2 \leq \bigvee \widetilde{\Gamma}$ and $\bigvee \Gamma_1 \geq \bigwedge \widetilde{\Gamma}$, which contradicts that $\Gamma = \emptyset$. We conclude that $\bigvee \Gamma_1 = \bigvee \Gamma_2$. Analogous reasoning shows that if there exist subsets $\Gamma_1,\Gamma_2 \subseteq \Delta$ with $\bigwedge \Gamma_1,\bigwedge \Gamma_2 \in \widetilde{\Gamma}$, then $\bigwedge \Gamma_1 = \bigwedge \Gamma_2$. We conclude that $|\widetilde{\Gamma}| \leq 2$, as claimed. It follows that $\bigvee \widetilde{\Gamma},\bigwedge\widetilde{\Gamma} \in \widetilde{\Gamma} \subseteq \Theta$.
    
    If $\Gamma \neq \emptyset$, let $\widetilde{\Gamma}' = \left\{y \in \widetilde{\Gamma} \ \middle| \  y \leq \bigwedge \Gamma\right\}$. By analogous reasoning to before, we see that $|\widetilde{\Gamma}'| \leq 2$. If $\widetilde{\Gamma}' = \emptyset$, then we have $\bigwedge \widetilde{\Gamma} = \bigwedge\Gamma \in \Theta$. Otherwise, we have that $\bigwedge \widetilde{\Gamma} \in \widetilde{\Gamma}' \subseteq \Theta$. The argument that $\bigvee \widetilde{\Gamma} \in \Theta$ is analogous.
\end{proof}

\begin{remark}
    We note that the formulas describing $\overline{\Delta}_\uparrow$ and $\overline{\Delta}_\downarrow$ given in Proposition~\ref{prop:bicompletion} still hold when $\Delta$ is not a chain. On the other hand, the assumption that $\Delta$ is a chain is explicitly used to prove the result for $\overline{\Delta}$. This, and the fact that the remainder of this section focuses only on chains, is our justification for stating Proposition~\ref{prop:bicompletion} at this level of generality.
\end{remark}

\begin{example}
    In Examples~\ref{ex:subobjectChains}(3) and~\ref{ex:factorMultisets}(3), we have that $\Delta_3$ is the bicompletion of both $\Delta_1$ and $\Delta_2$.
\end{example}

We conclude this section by proving our first main theorem.

\begin{theorem}[Theorem~\ref{thm:intro:compExists}]\label{prop:compExists}\
    \begin{enumerate}
        \item Let $\L$ be a complete bounded lattice. Then the prime chains of $\L$ and the maximal chains in $\L$ coincide. In particular, for any chain $\Delta \subseteq \L$, there exists a prime chain $\Delta'$ which is a refinement of $\L$.
        \item Let $X$ be an object of $\Cat$ and suppose $X$ is subobject complete. Then the composition series of $X$ and the maximal chains in $\Sub(X)$ coincide. In particular, for any subobject chain $\Delta \subseteq \Sub(X)$, there exists a composition series $\Delta'$ of $X$ which is a refinement of $\Delta$.
    \end{enumerate}
\end{theorem}

\begin{proof}
    It suffices to prove (1), as (2) will then follow from Proposition~\ref{prop:comp_cover}.
    
    First suppose $\Delta$ is a prime chain, and let $\Gamma \subseteq \L$ be a chain which refines $\Delta$. Let $z \in \Gamma$. We will show that $z \in \Delta$, and so $\Delta = \Gamma$.
    
    Define $\Omega^- = \{y \in \Delta \mid  y \lneq z\}$ and $\Omega^+ = \{y \in \Delta \mid  z \lneq y\}$. Denote $y_0 = \bigvee \Omega^-$ and $y_1 = \bigwedge \Omega^+$, which exist by the assumption that $\L$ is complete (and thus also bicomplete). Since prime chains are bicomplete, we then have that $y_0,y_1 \in \Delta$ and that $(y_1)_\Delta^- = y_0$. This means that either the segment $[y_0,y_1]$ is prime or that $y_0 = y_1$. In either case, this implies that either $z = y_0$ or $z = y_1$, and so $z \in \Delta.$
    
    Now suppose that $\Delta$ is a maximal chain. By Proposition~\ref{prop:bicompletion}, this means that $\Delta = \overline{\Delta}$; i.e., that $\Delta$ is bicomplete. Now let $y \in \Delta$ with $y \neq y^-_\Delta$ and choose some $z\in [y^-_\Delta,y]$. Then $\Delta \cup \{z\}$ is a chain which is a refinement of $\Delta$, and so $z \in \Delta$ by the maximality assumption. It follows that either $z = y^-_\Delta$ or $z = y$. Therefore, the segment $[y^-_\Delta,y]$ is prime.
    
    We have thus shown that the prime chains of $\L$ and the maximal chains of $\L$ coincide. The fact that an arbitrary chain can be refined into a prime chain is then a well-known consequence of Zorn's lemma.
\end{proof}

\begin{remark}\label{rem:refineableCompSeries}
    We note that Theorem~\ref{prop:compExists} immediately implies that any maximal chain is both a lower and upper prime chain (or composition series). However, the ``one-sided'' versions of composition series need not be maximal. Indeed, the subobject chains $\Delta_1$ and $\Delta_2$ in Example~\ref{ex:subobjectChains}(3) are lower and upper composition series, respectively, but both admit $\Delta_1\cup \Delta_2$ as a proper refinement.
\end{remark}

Building upon Remark~\ref{rem:refineableCompSeries}, we conclude with the following example, which shows that the behavior of ``one-sided'' prime chains can be much more pathological than that of their ``two-sided'' counterparts.

\begin{example}\label{ex:refineableCompSeries}
    Let $\L' = [0,1/2) \cup (1/2,1] \subseteq \mathbb{R}$ with the standard partial order. Let $\L = \L' \cup \{a, b, c, d\}$, where we set $t \leq a$ for $t \in [0,1/2)$, $d \leq t$ for $t \in (1/2,1]$, $a \leq b \leq d$ and $a \leq c \leq d$. (We suppose that $b$ and $c$ are incomparable.) It is straightforward to show that $\L$ is a complete bounded (modular) lattice. Then $\Delta = [0,1/2) \cup \{a\} \cup (1/2,1] \subseteq \L$ is a lower prime chain. On the other hand, the bicompletion $\overline{\Delta} = \Delta \cup \{d\}$ is not a (lower) prime chain. Moreover, $\Delta \cup \{b,d\}$ and $\Delta \cup \{c,d\}$ are both (lower) prime chains which refine $\Delta$, but $\seg(\Delta \cup \{b,d\}) = \{[a,b], [b,d]\} \neq \{[a,c], [c,d]\} = \seg(\Delta \cup \{c,d\})$.
\end{example}

%%%%%%%%%%%%%%%%%%%%%%%%%%%%%%%%%%%%%%%%%%%%%

\section{(Weakly) Jordan--H\"older--Schreier Lattices and Objects}\label{sec:JHSobjects}
In this section, we address the uniqueness of prime chains and composition series. Throughout this section, we assume that $\mathcal{L}$ is a complete bounded modular lattice and that a chosen object $X$ in the (well-powered) abelian category $\mathcal{A}$ is subobject complete.

\subsection{Axioms and basic properties}\label{sec:JHSobjectsIntro}

In this section, we introduce the set of axioms (Definition~\ref{JHSobjectsLattice}) under which we will prove our main results. We also extend the notions of up-down perspectivity and subfactor equivalence from Section~\ref{sec:background} include lattices and objects which are not of finite length.

\begin{definition}\label{def:factorEquivalenceLattice}
    Let $\Delta, \Gamma \subseteq \L$ be complete chains. We say that $\Delta$ and $\Gamma$ are \textdef{up-down perspective} if there exists a bijection
        $$\Phi: \left\{y \in \Delta \ \middle| \ y \neq y^-_\Delta\right\} \rightarrow \left\{z \in \Gamma \ \middle| \ z \neq z^-_\Gamma\right\}$$
        such that $\left[y^-_\Delta,y\right] \pers \left[\Phi(y)^-_\Gamma,\Phi(y)\right]$ for all $y$ in the domain of $\Phi$. In this case, we say that the bijection $\Phi$ \textdef{induces an up-down perspectivity}. We define up-down perspectivity for cocomplete and bicomplete chains analogously.
\end{definition}
 
\begin{remark}
    It is an immediate consequence of Proposition~\ref{prop:upperLowerFactors} that two bicomplete chains are up-down perspective as complete chains if and only if they are up-down perspective as cocomplete chains.
\end{remark}

\begin{remark}
We emphasize that the domain of the bijection $\Phi$ in Definition~\ref{def:factorEquivalenceLattice} is generally not all of $\Delta$, and for example in the cocomplete case consists of only those elements of $\Delta$ which have a ``successor". When $\Delta$ and $\Gamma$ are continuous well-ordered chains (or in particular finite), every element has a ``successor''. Therefore, up-down perspectivity implies that $\Delta$ and $\Gamma$ have the same cardinality in this case. Based solely on the definition, however, it is not clear in general whether chains with different cardinalities can be up-down perspective.
\end{remark}

We collect the following lemma, which will ultimately imply the uniqueness of $\Phi$ in the generalized Jordan--H\"older theorem for modular lattices.

\begin{lemma}\label{lem:unique_proj}
    Let $\Delta, \Gamma \subseteq \L$ be complete (respectively cocomplete, bicomplete) chains, and suppose that $\Delta$ and $\Gamma$ are up-down perspective. Then there is a unique bijection $\Phi$ which induces an up-down perspectivity.
\end{lemma}

\begin{proof}
    We prove the result in the complete case, as the other cases are similar. Suppose $\Phi, \Psi: \left\{y \in \Delta \ \middle| \ y \neq y_\Delta^-\right\} \rightarrow \left\{z \in \Gamma \ \middle| \ z \neq z_\Gamma^-\right\}$ are two bijections which both induce a up-down perspectivities. Suppose for a contradiction that there some $y$ in the domain of $\Phi$ and $\Psi$ such that $\Phi(y) \neq \Psi(y)$, and denote $\mathcal{Y} = \left[y^-_\Delta,y\right]$. Since $\Gamma$ is a chain, suppose without loss of generality that $\Phi(y) \leq \Psi(y)_\Gamma^-$. Since $\pi_\mathcal{Y}$ is order preserving, this implies that $\pi_\mathcal{Y}(\Phi(y)) \leq \pi_\mathcal{Y}\left(\Psi(y)_\Gamma^-\right)$. On the other hand, since $\Phi$ and $\Psi$ both induce up-down perspectivities, we must have $\pi_\mathcal{Y}(\Phi(y)) = y$ and $\pi_\mathcal{Y}\left(\Psi(y)_\Gamma^-\right) = y_\Delta^-$. Thus $y_\Delta^- = y$, which contradicts that $y$ is in the domain of $\Phi$ and $\Psi$. 
\end{proof}

For objects in abelian categories, we also consider the following definition.

\begin{definition}\label{def:factorEquivalence}
Let $X$ be an object of $\Cat$ and let $\Delta$ and $\Gamma$ be complete subobject chains of $X$. We say that $\Delta$ and $\Gamma$ are \textdef{subfactor equivalent}
if there exists a bijection
    $$\Phi: \left\{Y \in \Delta \ \middle| \ Y \neq Y^-_\Delta\right\} \rightarrow \left\{Z \in \Gamma \ \middle| \ Z \neq Z^-_\Gamma\right\}$$
so that $Y/Y^-_\Delta \cong \Phi(Y)/\Phi(Y)^-_\Gamma$ for all $Y$. We define subfactor equivalence for cocomplete and bicomplete subobject chains analogously.
\end{definition}

\begin{remark}
    It follows from Proposition~\ref{prop:subfactor_proj} that if $\Delta$ and $\Gamma$ are up-down perspective (as chains in $\Sub(X)$), then they are also subfactor equivalent (as subobject chains of $X$). The converse, however, does not hold. See Example~\ref{ex:subfact_vs_proj}.
\end{remark}

\begin{remark}\label{rem:selfDual2}
    It follows from Remark~\ref{rem:selfDual} that two chains $\Delta, \Gamma \subseteq \Sub(X)$ are subfactor equivalent if and only if the corresponding chains of quotient objects (seen as subobject chains in $\Sub^{op}(X)$) are also subfactor equivalent.
\end{remark}

Before proceeding, we fix the following 
notational convention.

\begin{notation}\label{nota:wedge_set}
     Given a subset $\Delta \subseteq \L$ and an element $z \in \L$, we denote $z \wedge \Delta = \{z \wedge y \mid y \in \Delta\}$ and $z \vee \Delta = \{z \vee y \mid y \in \Delta\}$. Likewise in the context of abelian categories we denote $Z \cap \Delta = \{Z \cap Y \mid Y \in \Delta\}$ and $Z + \Delta = \{Z + Y \mid Y \in \Delta\}$.
\end{notation}

We now give the set of axioms under which we will prove our ``Jordan--H\"older-like'' and ``Schreier-like theorems''. Readers may refer to Definition~\ref{def:directed} in the appendix for the definitions of upward and downward direct sets.

\begin{samepage}

\begin{definition}\label{JHSobjectsLattice}
    Let $\L$ be a bounded modular lattice.
    \begin{enumerate}
        \item We say that $\L$ is \textbf{(AB5)} if $\L$ is complete and for every (upward) directed set $\Delta \subseteq \L$ and $z \in \L$, the following relation holds:
        $$z \wedge \left(\bigvee \Delta\right) = \bigvee \left(z \wedge\Delta\right).$$
        \item We say that $\L$ is \textbf{(AB5$^*$)} if $\L$ is complete and for every downward directed set $\Delta \subseteq \L$ and $z \in \L$, the following relation holds:
         $$z \vee \left(\bigwedge \Delta\right)= \bigwedge\left(z \vee \Delta\right).$$
        \item We say that $\L$ is \textdef{weakly Jordan--H\"older--Schreier} (or \textdef{weakly JHS}) if $\L$ is both (AB5) and (AB5$^*$).
        \item We say that $\L$ is \textdef{Jordan--H\"older--Schreier} (or \textdef{JHS}) if $\L$ is weakly JHS and either $|\L| = 1$ or there exists at least one prime chain in $\L$.
        \item For P one of the properties defined in (1-4), we say an object $X$ in $\Cat$ is P whenever $\Sub(X)$ is P.
    \end{enumerate}
\end{definition}

\end{samepage}

\begin{remark}\label{rem:moduleJHS3}\
    \begin{enumerate}
        \item Note that, by definition, a nonzero object $X$ in $\Cat$ is JHS if and only if it is weakly JHS and contains a simple subquotient.
        \item It is well-known (see Proposition~\ref{prop:AB5} in the appendix) that $\L$ is (AB5) (respectively (AB5$^*$)) if and only if the condition in (1) (respectively (2)) holds whenever $\Delta$ is a chain.
        \item Recall that if $\Cat$ has arbitrary coproducts, then Grothendieck's axiom (AB5) is equivalent to requiring that $Z \cap (\sum \Delta) = \sum(Z \cap \Delta)$ for all objects $Z$ and directed systems $\Delta$. In particular, if $\Cat = \mathrm{Mod}\text{-}R$ is the category of (left) modules over a ring $R$, then a module $X \in \Cat$ is weakly JHS if and only if it is an ``(AB5$^*$)-module''. See Section~\ref{sec:examples} for additional examples and non-examples and Section~\ref{sec:directSum} for additional discussion of (AB5$^*$) modules.
        \item Recall that a complete, but not necessarily modular, lattice $\L$ is called \textdef{meet-continuous} (also known as upper-continuous) if it satisfies the (AB5) property. Likewise, $\L$ is said to be \textdef{join-continuous} (also known as lower-continuous) if it satisfies the (AB5$^*$) property. We have opted to instead use the (AB5) and (AB5$^*$) terminology to highlight the connection with Grothendieck's axioms.
        \item  We will show in Proposition~\ref{prop:ARcIsWJHSNotJHS} that there exist weakly JHS objects which are not JHS. In particular, this proposition gives an example of an abelian subcategory $\Cat' \subseteq \Cat$ such that $X$ is JHS in $\Cat$ but not in $\Cat'$.
    \end{enumerate}
\end{remark}

The following will be used throughout the remainder of this paper.

\begin{proposition}\label{prop:subquotientJHS}\
    \begin{enumerate}
        \item Suppose $\L$ is weakly JHS, and let $[s_0,s_1] \subseteq \L$ be a segment. Then $[s_0,s_1]$ is weakly JHS with respect to the partial order inherited from $\L$.
        \item Suppose $X$ is a weakly JHS object in $\Cat$. Then every subquotient of $X$ is weakly JHS in $\Cat$.
    \end{enumerate}
\end{proposition}

\begin{proof}
    These are immediate consequences of Propositions~\ref{prop:bicompleteIsLattice} and~\ref{prop:subQuotients}.
\end{proof}

\begin{remark}
We note that if $X$ is JHS, then it is possible a subquotient $Y$ of $X$ is not JHS. For example, let $Y$ be weakly JHS in an abelian category $\mathcal{B}$ and let $Z$ be JHS in an abelian category $\mathcal{C}$. Then $X=Y\oplus Z$ is JHS in $\mathcal{B}\times\mathcal{C}$ and $Y\subob (Y\oplus Z)$.
Alternatively, suppose $X$ is JHS, that a composition series $\Delta$ of $X$ has exactly one subfactor $Y/Y^-_\Delta$, and that $Y^-_\Delta\neq 0$. Then $Y^-_\Delta$ is weakly JHS but not JHS.
\end{remark}

\subsection{Conflations and Jordan--H\"older--Schreier-like Theorems}\label{sec:JHS}
In this section, we formalize the notion of conflating two chains and prove our ``Jordan--H\"older--like theorem'' (Theorem~\ref{thm:intro:Jordan--Holder} in the introduction). The argument is similar to the proof of the classical Jordan--H\"older Theorem given in \cite[Section~I.3]{Lang} and of the well-ordered case in \cite{Transfinite}.

Part of the basis for this argument is the ``Butterfly Lemma'' of Zassenhaus. For convenience, we prove a lattice theoretical statement of this result here.

\begin{lemma}\label{lem:butterflyLattice}
    Let $\L$ be a modular lattice, and let $\S = [s^-,s]$ and $\T = [t^-,t]$ be segments in $\L$. Define $\S' := \pi_\T(\S)$ and $\T' := \pi_\S(\T)$. Then
    \begin{enumerate}
        \item $\T' = [(t^- \wedge s) \vee s^-, (t \vee s^-) \wedge s]$  and $\S' = [(s^- \wedge t) \vee t^-, (s \vee t^-) \wedge t]$ are both segments.
        \item $\S' = \pi_\T(\T')$ and $\T' = \pi_\S(\S')$.
        \item $\S' \pers \T'$.
    \end{enumerate}
\end{lemma}

\begin{proof}
    (1) Let $x \in [(t^- \wedge s) \vee s^-, (t \vee s^-) \wedge s] \subseteq \S$. We will show that $x = \pi_\S\circ \pi_\T(x) \in \T'$. Indeed, we have
    \begin{align*}
        \pi_\S\circ \pi_\T(x) &= (((x \wedge t) \vee t^-) \wedge s) \vee s^- &\\
        &= ((x \wedge t) \vee (t^- \wedge s)) \vee s^- & \text{ because $x \wedge t \leq s$}\\
        &= ((x \vee (t^- \wedge s)) \wedge t) \vee s^- & \text{ because $t^- \wedge s \leq t$}\\
        &= (x \wedge t) \vee s^- & \text{ because $t^- \wedge s \leq x$}\\
        &= x \wedge (t \vee s^-) & \text{because $s^- \leq x$} \\
        &= x & \text{because $x \leq t \vee s^-$.}
    \end{align*}
    We conclude that $x \in \T'$. The result for $\S'$ then follows by symmetry.
    
    (2) Let $x \in \T' \subseteq \S$. Then $\pi_\T(x) \in \S'$ by definition and $x = \pi_\S\circ \pi_\T(x)$ by the proof of (1). We conclude that $x \in \pi_\S(\S')$. Conversely, we have that $\pi_\S(\S') \subseteq \T'$ because $\T' = \pi_\S(\T)$ and $\S' \subseteq \T'$. The proof that $\pi_\T(\T') = \S'$ is symmetric.
    
    (3) Since $\S' \subseteq \T$, we have $\pi_{\S'} = \pi_{\S'}|_\T \circ \pi_\T$. Likewise, we have $\pi_{\T'} = \pi_{\T'}|_\S \circ \pi_\S$. By (2), this means $\pi_{\S'}|_{\T'} = \pi_\T|_{\T'}$ and $\pi_{\T'}|_{\S'} = \pi_{\S}|_{\T'}$. By the proof of (1), we conclude that $\pi_{\S'}|_{\T'}$ and $\pi_{\T'}|_{\S'}$ are inverses. This proves the result.
\end{proof}

As a consequence, we obtain the Butterfly Lemma for modules.

\begin{lemma}[Zassenhaus Butterfly Lemma]\label{lem:butterfly}
    Let $X$ be an object in  $\Cat$ and let $Y,Y^-,Z,Z^-$ be subobjects of $X$ with $Y^- \subob Y$ and $Z^- \subob Z$. Then
    \begin{equation*}\label{eqn:butterfly} \frac{Y^- + (Y\cap Z)}{Y^- + (Y\cap Z^-)} \cong \frac{Z^- + (Z\cap Y)}{Z^- + (Z\cap Y^-)}.\end{equation*}
\end{lemma}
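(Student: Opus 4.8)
The plan is to prove the Zassenhaus Butterfly Lemma by the standard argument, adapted to the abelian-categorical setting. The key observation is a ``diamond isomorphism'' (second isomorphism theorem) statement: for subobjects $A \subob B$ and $C$ of $X$, one has a natural isomorphism $(B \cap C)/(A \cap C) \cong (A + (B\cap C))/A$. I would first establish this. In an abelian category, this follows from the fact that the square with corners $A\cap C$, $B\cap C$, $A$, $A + (B\cap C)$ is a pushout (and pullback), since pushing out a monomorphism along any map yields a monomorphism with isomorphic cokernel; concretely, $A + (B\cap C)$ is the pushout of $A \leftarrow A\cap C \rightarrow B \cap C$ (using that $A \cap C = A \cap (B\cap C)$ because $A \subob B$), and the cokernel of $A \hookrightarrow A + (B\cap C)$ is identified with the cokernel of $A\cap C \hookrightarrow B\cap C$.

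Next I would apply this diamond isomorphism twice, symmetrically. Set $D = (Y\cap Z)$. Applying the isomorphism with $A = Y^- + (Y\cap Z^-)$, $B = Y$, $C = Z$ — after checking that $A \cap Z = (Y^- \cap Z) + (Y\cap Z^-) = Y^-\cap Z + Y\cap Z^-$ and that $A + (Y\cap Z) = Y^- + (Y\cap Z)$, using the modular law in the lattice $\Sub(X)$ — I would identify the left-hand side
\[
\frac{Y^- + (Y\cap Z)}{Y^- + (Y\cap Z^-)}
\]
with a subquotient of $Z$ of the form $\dfrac{(Y\cap Z)}{(Y^-\cap Z) + (Y\cap Z^-)}$. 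The crucial algebraic input at each stage is the modular law $P + (Q\cap R) = (P+Q)\cap R$ whenever $P \subob R$, which holds in any abelian category since $\Sub(X)$ is a modular lattice. By the symmetry of the expression $\dfrac{(Y\cap Z)}{(Y^-\cap Z) + (Y\cap Z^-)}$ under swapping $(Y,Y^-)\leftrightarrow(Z,Z^-)$, the same computation identifies the right-hand side of the claimed isomorphism with the identical subquotient, and composing the two isomorphisms gives the result.

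The main obstacle I anticipate is bookkeeping with the lattice identities and making sure the modular-law manipulations are legitimate: one must verify, for instance, that $(Y^- + (Y\cap Z^-)) \cap (Y\cap Z) = (Y^-\cap Z) + (Y\cap Z^-)$, which requires applying modularity with the correct containments (here $Y\cap Z^- \subob Y\cap Z$ and $Y^-\cap(Y\cap Z) = Y^-\cap Z$ since $Y^-\subob Y$). None of this is deep, but it is the kind of computation where a careless step produces a wrong intersection. The categorical diamond isomorphism itself is the only place where one genuinely leaves lattice theory, and it is a clean consequence of pushouts of monomorphisms being monomorphisms with preserved cokernel, as recalled in Section~\ref{sec:subobjects}. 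So the proof is essentially: (i) prove the diamond isomorphism categorically; (ii) reduce both sides to the common symmetric subquotient via two applications of (i) plus the modular law; (iii) conclude.
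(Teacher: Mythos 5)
Your proposal is correct and follows essentially the same route as the paper's proof: both reduce each side to the symmetric subquotient $(Y\cap Z)/\bigl((Y^-\cap Z)+(Y\cap Z^-)\bigr)$ via the second (diamond) isomorphism theorem together with the lattice identities $Y^- + (Y\cap Z) = (Y^-+(Y\cap Z^-)) + (Y\cap Z)$ and $(Y\cap Z)\cap(Y^-+(Y\cap Z^-)) = (Y^-\cap Z)+(Y\cap Z^-)$, then conclude by symmetry. You simply spell out what the paper compresses into ``by the isomorphism theorems,'' namely the categorical pushout argument for the diamond isomorphism and the modular-law verification of the intersection identity.
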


\begin{proof}
    The result follows from applying Lemma~\ref{lem:butterflyLattice} and Proposition~\ref{prop:subfactor_proj} to the intervals $[Y^-,Y]$ and $[Z^-,Z]$ of $\Sub(X)$.
\end{proof}

\begin{remark}
    If $Y = Y^-$ or $Z = Z^-$, then both sides of the equation in Lemma~\ref{lem:butterfly} are zero.
\end{remark}

We now define a procedure to conflate the data of two subobject chains.

\begin{definition}\label{def:conflation}
   Let $\Delta, \Gamma \subseteq \L$ be spanning chains. We the define the \textdef{lower conflation} of $\Delta$ by $\Gamma$ to be
    $$\con^-(\Delta,\Gamma):=\Delta \cup \left\{y^-_\Delta \vee (y\wedge z) \ \middle| \ (y,z) \in \Delta\times\Gamma,y^-_\Delta \text{ exists}\right\}.$$
    Similarly, we define the \textdef{upper conflation} of $\Delta$ by $\Gamma$ to be
    $$\con^+(\Delta,\Gamma):= \Delta \cup \left\{y \vee (y^+_\Delta\wedge z) \ \middle| \ (y,z) \in \Delta\times\Gamma, y^+_\Delta \text{ exists}\right\}.$$
\end{definition}

\begin{samepage}

\begin{example}\label{ex:conflation}\
\begin{enumerate}
    \item Let $\mathbb{Z}_{24} = \{0,1,\ldots,23\}$ be the cyclic group of order 24 (considered in the category of $\mathbb{Z}$-modules). For each $i \in \mathbb{Z}_{24}$, we denote by $\langle i\rangle$ the subgroup generated by $i$. Now consider the (bicomplete) subobject chains
    $$\Delta = \{\langle 0\rangle,\langle 4\rangle,\langle 1\rangle\}\qquad\text{and}\qquad
        \Gamma = \{\langle 0\rangle,\langle 6\rangle,\langle 1\rangle\}
    $$
    of $\mathbb{Z}_{24}$. We then have
    \begin{eqnarray*}
        \con^-(\Delta,\Gamma) = \con^+(\Delta,\Gamma) &=& \{\langle 0\rangle, \langle 12\rangle,\langle 4\rangle,\langle 2\rangle,\langle 1\rangle\}\\
        \con^-(\Gamma,\Delta) = \con^+(\Gamma,\Delta) &=& \{\langle 0\rangle,\langle12\rangle,\langle6\rangle,\langle2\rangle,\langle1\rangle\}.
    \end{eqnarray*}
    In particular, we note that $\con^-(-,-)$ and $\con^+(-,-)$ are not symmetric in their arguments.
    
    \item Consider the subobject chains $\Delta_1$ and $\Delta_2$ in Example~\ref{ex:subobjectChains}(3). Then $\con^-(\Delta_1,\Delta_2) = \Delta_1 = \con^+(\Delta_1,\Delta_2)$, but for different reasons. We have $\con^-(\Delta_1,\Delta_2) = \Delta_1$ because $Y^-_{\Delta_1} = Y$ for all $Y \in \Delta_1$. On the other hand, we have $\con^+(\Delta_1,\Delta_2) = \Delta_1$ because for $Y \in \Delta_1$, $Y^+_{\Delta_1}$ is only an element of $\Delta_1$ if $Y = 0$, in which case $Y^+_{\Delta_1} = Y$.
    
    \item Let $p$ and $q$ be distinct primes and let $\Delta_p$ and $\Delta_q$ be as in Example~\ref{ex:subobjectChains}(2). Note that for $\alpha \in \mathbb{N}$, we have $(p^\alpha\mathbb{Z})^-_{\Delta_p} =  p^{\alpha+1}\mathbb{Z}$. Moreover, for $\alpha,\beta \in \mathbb{N}$, we have
    $$p^{\alpha+1}\mathbb{Z} + \left(p^\alpha\mathbb{Z}\cap q^{\beta}\mathbb{Z}\right) = p^\alpha \mathbb{Z}.$$
    We then have that $\con^-(\Delta_p,\Delta_q) = \Delta_p$. We can also see this as a consequence of Proposition~\ref{prop:compExists}(1) and the fact that $\Delta_p$ is a composition series. See Remark~\ref{rem:CompSeriesRefinements} below.
\end{enumerate}
\end{example}

\end{samepage}

\begin{remark}
    Note that if $\Delta$ is complete, then for $y \in \Delta$ we can write $y = y^-_\Delta \vee (y\wedge \hat{1})$, where $\hat{1} \in \Gamma$ since $\Gamma$ is spanning. In particular, the ``$\Delta\cup$'' in the definition of $\con^-(\Delta,\Gamma)$ becomes redundant in this case. The behavior is similar in the cocomplete case. Nevertheless, it it necessary to include this in the definition in order for Proposition~\ref{prop:conflation} below to hold. Indeed, in Example~\ref{ex:conflation}(2), removing ``$\Delta_1\cup$'' from the definition of $\con^+(\Delta_1,\Delta_2)$ would result in $\con^+(\Delta_1,\Delta_2) = \{0\}$, which is not a refinement of $\Delta_1$.
\end{remark}

We now prove a series of results about the conflation of two chains. From these, we will deduce our ``Schreier-like'' and ``Jordan--H\"older-like'' theorems (Theorems~\ref{thm:schreier} and~\ref{thm:Jordan--Holder}, respectively).

\begin{proposition}\label{prop:conflation}
    Let $\Delta, \Gamma \subseteq \L$ be spanning chains. Then
    $\con^-(\Delta,\Gamma)$ and $\con^+(\Delta,\Gamma)$ are
    spanning chains which are refinements of $\Delta$.
\end{proposition}

\begin{proof}
We prove the result only for $\con^-(\Delta,\Gamma)$. The proof for $\con^+(\Delta,\Gamma)$ is similar.

    Since $\Delta \subseteq \con^-(\Delta,\Gamma)$, it is clear that $\con^-(\Delta,\Gamma)$ is spanning. Let $w, w' \in \con^-(\Delta,\Gamma)$. We will show that either $w' \leq w$ or $w \leq w'$, and therefore that $\con^-(\Delta,\Gamma)$ is a chain.
    
    If both $w$ and $w'$ are elements of $\Delta$, then we are done. If $w \notin \Delta$, write $w = y^-_\Delta \vee (y\wedge z)$ with $y \in \Delta$ and $z \in \Gamma$. Now, there exists $y' \in \Delta$ so that either $w' = y'$ or $(y')^-_\Delta$ exists and $(y')^-_\Delta \leq w' \leq y'$. We see that if $y' \lneq y$ (respectively $y \lneq y'$) then $w' \leq w$ (respectively $w' \leq w$). Since $\Delta$ is a chain, the only other possibility is that $y = y'$. Since $y$ has a predecessor, this means there exists $z' \in \Gamma$ so that $w' = y^-_\Delta \vee (y\wedge z')$. Then since $\Gamma$ is a chain, we have without loss of generality that $z' \leq z$ and thus $w' \leq w$.
\end{proof}

\begin{remark}\label{rem:CompSeriesRefinements}
    If $\Delta$ is a prime chain, then Theorem~\ref{prop:compExists} and Proposition~\ref{prop:conflation} immediately imply that $\con^-(\Delta,\Gamma) = \Delta = \con^+(\Delta,\Gamma)$ for any $\Gamma$.
\end{remark}

\begin{samepage}

\begin{lemma}\label{lem:bicompleteConflation}
    Let $\Delta, \Gamma \subseteq \L$ be spanning chains.
    \begin{enumerate}
        \item If $\L$ is (AB5) and both $\Delta$ and $\Gamma$ are complete, then $\con^-(\Delta,\Gamma)$ is complete.
        \item If $\L$ is (AB5$^*$) and both $\Delta$ and $\Gamma$ are cocomplete, then $\con^+(\Delta,\Gamma)$ is complete.
        \item If $\L$ is weakly JHS and both $\Delta$ and $\Gamma$ are bicomplete, then $\con^-(\Delta,\Gamma) = \con^+(\Delta,\Gamma)$ and this chain is bicomplete.
    \end{enumerate}
\end{lemma}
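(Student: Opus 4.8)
The plan is to prove the three parts in order, reusing the symmetry between the cocomplete and complete cases so that essentially all of the work is in part~(1).

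For part~(1), suppose $X$ satisfies \ref{JHS3} and that $\Delta$ and $\Gamma$ are both cocomplete. By Proposition~\ref{prop:conflation} we already know $\Omega := \con^-(\Delta,\Gamma)$ is a subobject chain refining $\Delta$; what remains is \ref{F4}. Let $\Omega' \subseteq \Omega$; I must show $\colim \Omega'$ exists, is a subobject of $X$, and lies in $\Omega$. The first point is immediate from \ref{JHS1} (which is part of \ref{JHS3}). For the membership claim, I would first reduce to the case where $\Omega'$ has no maximum (if it does, that maximum is the colimit and it is already in $\Omega$). Writing each non-$\Delta$ element of $\Omega'$ as $Y^-_\Delta + (Y \cap Z)$ for suitable $Y \in \Delta$, $Z \in \Gamma$, I would separate $\Omega'$ according to the $\Delta$-component $Y$: either there is a single ``top'' value $Y_0 \in \Delta$ such that cofinally many elements of $\Omega'$ have $\Delta$-component $Y_0$ (and are of the form $(Y_0)^-_\Delta + (Y_0 \cap Z)$ for $Z$ ranging over a cofinal subset $\Gamma' \subseteq \Gamma$), or the $\Delta$-components themselves are cofinal in a subset $\widetilde\Delta \subseteq \Delta$ with no maximum. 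In the second case, $\colim \Omega' = \colim \widetilde\Delta \in \Delta \subseteq \Omega$ by cocompleteness of $\Delta$. In the first case, using that colimits commute with colimits and that $\colim$ of a chain distributes over the sum $+$, I would compute
\[
\colim \Omega' \;=\; (Y_0)^-_\Delta + \bigl(Y_0 \cap \textstyle\colim_{Z \in \Gamma'} Z\bigr) \;=\; (Y_0)^-_\Delta + (Y_0 \cap \colim \Gamma'),
\]
where the crucial middle equality is exactly the distributivity of $\colim$ over $\cap$ supplied by \ref{JHS3}, applied to the chain $\Gamma'$ (or its image under $Y_0 \cap (-)$) inside the subobject $Y_0$; and $\colim \Gamma' \in \Gamma$ since $\Gamma$ is cocomplete, so the right-hand side is again of the defining form for an element of $\con^-(\Delta,\Gamma)$, hence in $\Omega$. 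One subtlety to be careful about: to invoke \ref{JHS3} with the chain $\Gamma'$ I should note that $X$ being weakly JHS is not assumed in part~(1) — only \ref{JHS3} — but \ref{JHS3} is precisely the statement $\colim(\Delta \cap Z) = (\colim \Delta)\cap Z$ for all pre-subobject chains and all $Z$, which is what I need.

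Part~(2) follows by the dual argument: \ref{JHS4} gives $\lim(\Delta + Z) = (\lim \Delta) + Z$, and one runs the same reduction on $\con^+(\Delta,\Gamma)$ with $\lim$ in place of $\colim$, $+$ in place of $\cap$, successors in place of predecessors, and Proposition~\ref{prop:conflation}'s statement for $\con^+$ in place of that for $\con^-$. I will just indicate this rather than rewrite it.

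For part~(3), assume $X$ is weakly JHS and $\Delta,\Gamma$ are bicomplete. By parts~(1) and~(2), $\con^-(\Delta,\Gamma)$ is cocomplete and $\con^+(\Delta,\Gamma)$ is complete. The main point is the equality $\con^-(\Delta,\Gamma) = \con^+(\Delta,\Gamma)$; once that is established, the common chain is both cocomplete and complete, hence bicomplete, and we are done. To prove the equality I would show each is contained in the other. Take a generator $Y^-_\Delta + (Y \cap Z)$ of $\con^-(\Delta,\Gamma)$ with $Y \neq Y^-_\Delta$. By Proposition~\ref{prop:upperLowerFactors}(1), since $\Delta$ is bicomplete, $(Y^-_\Delta)^+_\Delta = Y$; so writing $W = Y^-_\Delta$ we have $W \neq W^+_\Delta$ and $W^+_\Delta = Y$, whence $W + (W^+_\Delta \cap Z) = Y^-_\Delta + (Y \cap Z)$ is a generator of $\con^+(\Delta,\Gamma)$. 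This gives $\con^-(\Delta,\Gamma) \subseteq \con^+(\Delta,\Gamma)$ (the elements of $\Delta$ itself are in both). The reverse inclusion is symmetric using Proposition~\ref{prop:upperLowerFactors}(2): a generator $W + (W^+_\Delta \cap Z)$ of $\con^+(\Delta,\Gamma)$ with $W \neq W^+_\Delta$ satisfies $(W^+_\Delta)^-_\Delta = W$, so setting $Y = W^+_\Delta$ it equals $Y^-_\Delta + (Y \cap Z)$, a generator of $\con^-(\Delta,\Gamma)$. Hence the two sets coincide.

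The step I expect to be the main obstacle is the first case of part~(1): justifying the interchange $\colim_{Z}\bigl(Y_0 \cap Z\bigr) = Y_0 \cap \colim_Z Z$ and, more bookkeeping-wise, handling cofinality correctly so that an arbitrary subset $\Omega' \subseteq \con^-(\Delta,\Gamma)$ — whose elements mix honest members of $\Delta$ with conflation terms having various $\Delta$-components — really does reduce to one of the two clean cases above. Everything else is either a direct appeal to the cited results (Propositions~\ref{prop:conflation} and~\ref{prop:upperLowerFactors}) or a formal dualization.
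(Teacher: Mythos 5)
Your proposal is correct and follows essentially the same route as the paper's proof: the same case split on whether the colimit $Y_0$ of the $\Delta$-components of $\Omega'$ is itself attained as a component (a cofinality sandwich in one case, the \ref{JHS3}-distributivity $\colim(Y_0\cap\Gamma') = Y_0\cap\colim\Gamma'$ in the other), the dual computation for part~(2), and Proposition~\ref{prop:upperLowerFactors} to match the generators of $\con^-(\Delta,\Gamma)$ and $\con^+(\Delta,\Gamma)$ in part~(3). The only step to tighten is the interchange $\colim\{(Y_0)^-_\Delta + (Y_0\cap Z)\} = (Y_0)^-_\Delta + \colim\{Y_0\cap Z\}$, which is not purely formal ``colimits commute with colimits'' at the level of $\Sub(X)$ but is exactly what \ref{JHS3} together with Lemma~\ref{lem:selfDual1} (cf.\ Lemma~\ref{lem:selfDual3}) provides — the same justification the paper invokes at this point.
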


\end{samepage}

\begin{proof}
    (1) Let $\Theta \subseteq \con^-(\Delta,\Gamma)$ and let $\Theta_\Delta = \left\{y \in \Delta \ \middle| \  \exists z \in \Gamma: y^-_\Delta \vee (y\wedge z) \in \Theta\right\}$. Denote $y_0  = \bigwedge \Theta_\Delta$. Note that since $\Delta$ is complete, we have $y_0 \in \Delta$.
    
    If $y_0 \notin \Theta_\Delta$, then for all $y \leq y_0$ in $\Delta$, there exists $y' \in \Delta$ and $z' \in \Gamma$ so that $w:=(y')^-_\Delta \vee (y' \wedge z') \in \Theta$ and
    $y \leq w \leq y_0.$
    We conclude that $\bigvee \Theta = y_0 \in \Delta \subseteq \con^-(\Delta,\Gamma)$.
    
    Otherwise, let $\Theta_\Gamma = \left\{z \in \Gamma \ \middle| \  (y_0)^-_\Gamma \vee (y_0\wedge z) \in \Theta\right\}$. By the (AB5) property, we then have
    \begin{eqnarray*}
        \bigvee \Theta &=& \bigvee \left\{(y_0)^-_\Delta \vee (y_0\wedge z) \ \middle| \ z \in \Theta_\Gamma\right\}\\
        &=& (y_0)^-_\Delta \vee \left(y_0\wedge \bigvee \Theta_\Gamma\right).
    \end{eqnarray*}
    Now since $\Gamma$ is complete, we have $\bigvee \Theta_\Gamma \in \Gamma$ and therefore $\bigvee \Theta \in \con^-(\Delta,\Gamma)$.
    
    (2)Let $\Theta \subseteq \con^+(\Delta,\Gamma)$ and let $\Theta_\Delta = \left\{y \in \Delta \ \middle| \ \exists z \in \Gamma: y \vee (y^+\wedge z) \in \Theta\right\}$. Denote $y_0=\bigvee\Theta_\Delta$. Note that since $\Delta$ is cocomplete, we have $y_0 \in \Delta$.
    
    If $y_0 \notin \Theta_\Delta$, then for all $y_0\leq y$ in $\Delta$, there exists $y' \in \Delta$ and $z' \in \Gamma$ so that $w:=y' \vee ((y')^+_\Delta \wedge z') \in \Theta$ and $y_0\leq w \leq y$. We conclude that $\bigwedge\Theta = y_0 \in \Delta\subseteq\con^+(\Delta,\Gamma)$.
    
    Otherwise, let $\Theta_\Gamma = \left\{z \in \Gamma \ \middle| \ y_0 \vee ((y_0)^+_\Delta \wedge z) \in \Theta\right\}$. By the (AB5$^*$) property, we then have
    \begin{eqnarray*}
        \bigwedge\Theta &=& \bigwedge\left\{y_0 \vee ((y_0)^+_\Delta\wedge z) \ \middle| \ z \in \Theta_\Gamma\right\}\\
        &=& y_0 \vee \left((y_0)^+_\Delta \wedge \bigwedge \Theta_\Gamma\right).
    \end{eqnarray*}
    Now since $\Gamma$ is cocomplete, we have that $\bigwedge\Theta_\Gamma \in \Gamma$ and therefore $\bigwedge\Theta \in \con^+(\Delta,\Gamma)$.
    
    (3) Let $y \in \Delta$ and $z \in \Gamma$. If $y^-_\Delta = y$, then $$y^-_\Delta \vee (y\wedge z) = y = y \vee \left(y^+_\Delta \wedge 0\right).$$
    Otherwise, we have
    $$y^-_\Delta \vee (y\wedge z) = y^-_\Delta \vee \left(\left(y^-_\Delta\right)^+ \wedge z\right)$$
    by Proposition~\ref{prop:upperLowerFactors}. Therefore in either case, we have that $y^-_\Delta \vee (y\wedge z) \in \con^+(\Delta,\Gamma)$. By symmetry, we conclude that $\con^-(\Delta,\Gamma) = \con^+(\Delta,\Gamma)$. This subobject chain is then bicomplete as a consequence of (1) and (2).
\end{proof}

\begin{example}\label{ex:conflationNotComplete}
    We now present an example of a conflation between a bicomplete chain and a complete chain which does not result in a bicomplete chain.
    Consider the category $\Fun(\R,\veck)$ and define the subobject chains $\Delta$ and $\Gamma$:
    \begin{eqnarray*}
        \Delta &=& \left\{0, M_{[1,1]}, M_{[\frac{1}{2},1]}, M_{[0,1]}\right\}\\
        \Gamma &=& \left\{0, M_{[1,1]}, M_{[0,1]} \right\} \cup \left\{ M_{(a,1]} \mid a\in\R,\, 0< a < 1 \right\}.
    \end{eqnarray*}
    Note that $\Delta$ is bicomplete and $\Gamma$ is complete.
    Thus it makes sense to take $\con^-(\Delta,\Gamma)$, which is complete by Lemma \ref{lem:bicompleteConflation}.
    We see $\con^-(\Delta,\Gamma)$ is given by
    $$\con^-(\Delta, \Gamma) = \left\{0, M_{[1,1]}, M_{[\frac{1}{2},1]}, M_{[0,1]}\right\} \cup \left\{ M_{(a,1]} \mid a\in\R,\, 0< a < 1 \right\} = \Delta\cup\Gamma.$$
    We see $\Delta\cup\Gamma$ is still missing uncountably many infimums and so it is not cocomplete.
    Thus, $\con^-(\Delta,\Gamma)$ is not bicomplete.
\end{example}

\begin{samepage}

\begin{lemma}\label{lem:maxRep}
     Let $\Delta \subseteq \L$ be a complete spanning chain and let $\Gamma\subseteq \L$ be a cocomplete spanning chain.
     
     \begin{enumerate}
         \item Suppose $\L$ is (AB5$^*$), let $w \in \con^-(\Delta,\Gamma)$, and denote $$\Omega(w) = \left\{(y,z) \in \Delta\times\Gamma \ \middle| \  y^-_\Delta \vee \left(y \wedge z\right) = w\right\}.$$ Then $\Omega(w)$ contains a minimal element with respect to lexicographical ordering of $\Delta\times\Gamma$.
         \item Suppose $\L$ is (AB5), let $w \in \con^+(\Gamma,\Delta)$, and denote
         $$\Omega(w) = \left\{(z,y) \in \Gamma\times\Delta \ \middle| \  z \vee \left(z^+_\Gamma\wedge Y\right) = w\right\}.$$ Then $\Omega(w)$ contains a maximal element with respect to the lexicographical ordering of $\Gamma\times\Delta$.
     \end{enumerate}
\end{lemma}

\end{samepage}

\begin{proof}
    We prove only (1) since the proof of (2) is dual. Let $\Theta_\Delta = \{y \in \Delta \mid  \exists z \in \Gamma:(y,z) \in \Omega(w)\}$. We first claim that $|
    \Theta_\Delta| \leq 2$. Indeed, suppose $y' \lneq y$ and there exist $z, z' \in \Gamma$ such that $(y,z)$ and $(y',z')$ are in $\Omega(w)$. Note that $y^-_\Delta, (y')^-_\Delta \in \Delta$ since $\Delta$ is complete.
    Then
    $$(y')^-_\Delta \leq (y')^-_\Delta \wedge \vee(y'\wedge z') \leq y' \lneq y^-_\Delta \lneq y^-_\Delta \vee (y \wedge z)$$
    implies that $y' = y^-_\Delta$, and so $\Theta_\Delta = \{y,y'\}$.
    Now let $y_0 = \bigwedge \Theta_\Delta$ and note that $y_0 \in \Theta_\Delta$ since this set contains at most two elements. Define $\Theta_\Gamma = \{z \in \Gamma \mid  (y_0,z) \in \Theta\}$. Since $\Gamma$ is cocomplete, we note that $z_0:=\bigwedge \Theta_\Gamma \in \Gamma$. Then by the (AB5$^*$) property, we have
    \begin{eqnarray*}
        (y_0)^-_\Delta \vee (y_0 \wedge z_0) &=& (y_0)^-_\Delta \vee \left(Y_0 \wedge \bigwedge \Theta_\Gamma\right)\\
        &=& \bigwedge((y_0)^-_\Delta \vee (y_0 \wedge \Theta_\Gamma))\\
        &=& \bigwedge\{w\}\\
        &=& w.
    \end{eqnarray*}
    We conclude that $(y_0,z_0) \in \Omega(w)$. Moreover, if $y \in \Delta$ and $z \in \Gamma$ with either (a) $y \lneq y_0$, or (b) $y = y_0$ and $z \lneq z_0$, then $(y,z) \notin \Omega(w)$. This means that $(y_0,z_0) = \min\Omega(w)$ in the lexicographic ordering.
\end{proof}

\begin{example}
We note that as a $\mathbb{Z}$-module, $\mathbb{Z}$ is subobject bicomplete and (AB5). To see that $\mathbb{Z}$ is not (AB5$^*$), let $p$ and $q$ be primes and let $\Delta_p$ and $\Delta_q$ be as in Example~\ref{ex:subobjectChains}. Let $\Delta_p' = \Delta_p \setminus\{0\mathbb{Z}\}$. Then for any $\beta \in \mathbb{N}$ with $\beta \neq 0$, we have
\begin{eqnarray*}
    \lim(\Delta'_p + q^\beta\mathbb{Z}) &=& \lim_{\alpha \rightarrow\infty} (p^\alpha \mathbb{Z} + q^\beta\mathbb{Z}) =  \mathbb{Z}\\
    \lim(\Delta'_p) + q^\beta\mathbb{Z} &=& \left(\lim_{\alpha\rightarrow\infty}p^\alpha\mathbb{Z}\right) + q^\beta\mathbb{Z} = q^\beta\mathbb{Z}.
\end{eqnarray*}
Now let $\alpha \neq 0$. Working in $\con^-(\Delta_p,\Delta_q)$, we then have by Example~\ref{ex:conflation}(3) that
$$\Omega(p^\alpha\mathbb{Z}) = \{(p^\alpha\mathbb{Z},q^\beta\mathbb{Z}) \mid \beta \neq 0\} \cup \{(p^{\alpha-1}\mathbb{Z},0\mathbb{Z})\}.$$
We note that $\Omega(p^\alpha\mathbb{Z})$ has $(p^{\alpha-1}\mathbb{Z},0\mathbb{Z})$ as its maximal element, but has no minimal element.
\end{example}

\begin{lemma}\label{lem:maxSuccessor}
Suppose $\L$ is weakly JHS, let $\Delta \subseteq \L$ be a complete spanning chain, and let $\Gamma$ be a cocomplete spanning chain.
\begin{enumerate}
    \item Let $w \in \con^-(\Delta,\Gamma)$ and write $w = y^-_\Delta \vee (y \wedge z)$ with $(y,z) \in \Delta\times\Gamma$ minimal in the sense of Lemma~\ref{lem:maxRep}(1). Then
    $$w^-_{\con^-(\Delta,\Gamma)} =\begin{cases} w & y^-_\Delta = y\\y^-_\Delta \vee \left(y\wedge z^-_\Gamma\right) & y^-_\Delta \neq y.\end{cases}$$
    
    \item Let $w \in \con^+(\Gamma,\Delta)$ and write $w = z \vee (z^+_\Gamma \wedge y)$ with $(z,y) \in \Gamma\times\Delta$ maximal in the sense of Lemma~\ref{lem:maxRep}(2). Then
    $$w^+_{\con^+(\Gamma,\Delta)} = \begin{cases} w & z^+_\Gamma = z\\z \vee \left(z^+_\Gamma\wedge y^+_\Delta\right) & z^+_\Gamma \neq z.\end{cases}$$
\end{enumerate}
\end{lemma}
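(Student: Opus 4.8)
The plan is to prove (1) in full and to obtain (2) by the evident dual argument, interchanging the roles of limits and colimits, of $\Delta$ and $\Gamma$, and of~\ref{JHS3} and~\ref{JHS4}. Throughout, write $\Theta := \con^-(\Delta,\Gamma)$; by Proposition~\ref{prop:conflation} this is a subobject chain refining $\Delta$, and since $X$ is weakly JHS (hence satisfies~\ref{JHS1}) the colimit of $S := \{W'\in\Theta \mid W'\subobneq W\}$ exists, is a subobject of $X$, equals $W^-_\Theta$, and is the supremum of $S$ in $\Sub(X)$. So it suffices to identify $\sup S$, which I would do by bounding it above and below separately.

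The key preliminary step is to extract, from lex-minimality of $(Y,Z)$ in $\Omega(W)$ under the ``Case~2'' hypothesis $Y^-_\Delta\neq Y$, two facts: (i) $Y^-_\Delta\subobneq W$ (equivalently, $Y\cap Z$ is not a subobject of $Y^-_\Delta$), whence also $Z\neq 0$; and (ii) $Y^-_\Delta + (Y\cap Z'')\subobneq W$ for every $Z''\in\Gamma$ with $Z''\subobneq Z$. Indeed, for (i): if $W = Y^-_\Delta$ then, since $\Delta$ is cocomplete and hence $Y^-_\Delta\in\Delta$, the pair $(Y^-_\Delta,X)\in\Delta\times\Gamma$ lies in $\Omega(W)$ and is lex-strictly smaller than $(Y,Z)$, a contradiction; and for (ii), any such element is automatically $\subob W$, while equality would put the lex-strictly smaller pair $(Y,Z'')$ in $\Omega(W)$. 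Granting (i) and (ii), the lower bound is immediate: the chain $\{Y^-_\Delta + (Y\cap Z'') \mid Z''\in\Gamma,\ Z''\subobneq Z\}$ is contained in $S$, its colimit equals $Y^-_\Delta + (Y\cap Z^-_\Gamma)$ by~\ref{JHS3} and Lemma~\ref{lem:selfDual1} (the same computation as in the proof of Lemma~\ref{lem:bicompleteConflation}, using $Z\neq 0$ so that $Z^-_\Gamma = \colim\{Z''\in\Gamma \mid Z''\subobneq Z\}$), and therefore $Y^-_\Delta + (Y\cap Z^-_\Gamma)\subob\sup S$. For the upper bound, take $W'\in\Theta$ with $W'\subobneq W$ and write $W' = (Y')^-_\Delta + (Y'\cap Z')$ for some $(Y',Z')\in\Delta\times\Gamma$; comparing $Y'$ and $Y$ in the chain $\Delta$, the possibility $Y\subobneq Y'$ cannot occur, since then $W\subob Y\subob (Y')^-_\Delta\subob W'$, contradicting $W'\subobneq W$; if $Y'\subobneq Y$ then $W'\subob Y'\subob Y^-_\Delta$; and if $Y'=Y$ then $W' = Y^-_\Delta + (Y\cap Z')$ with $Z'\subobneq Z$ (else $W\subob W'$), so $Z'\subob Z^-_\Gamma$; in every case $W'\subob Y^-_\Delta + (Y\cap Z^-_\Gamma)$. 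The two bounds give $W^-_\Theta = Y^-_\Delta + (Y\cap Z^-_\Gamma)$, as claimed in Case~2.

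In ``Case~1'' ($Y^-_\Delta = Y$), lex-minimality forces $Z = 0$: any pair in $\Omega(W)$ with strictly smaller first coordinate $Y''$ is excluded because $(Y'')^-_\Delta + (Y''\cap Z'')\subob Y''\subobneq Y = W$, so $Y$ is the least first coordinate, and then $(Y,0)$ is the lex-minimum. Hence $Y^-_\Delta + (Y\cap Z^-_\Gamma)$ collapses to $Y = W$. Now $\sup S\subob W$ is trivial, and for the reverse any upper bound $W_1\subobneq W = Y$ of $S$ would satisfy $\bar Y\subob W_1$ for every $\bar Y\in\Delta$ with $\bar Y\subobneq Y$ (all of which lie in $\Delta\subseteq\Theta$, hence in $S$), so $Y^-_\Delta = \colim\{\bar Y\in\Delta \mid \bar Y\subobneq Y\}$ would be $\subob W_1$, that is $Y\subob W_1$, contradicting $W_1\subobneq Y$; the degenerate subcase where this last set is empty forces $Y = 0 = W$ and is immediate.

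I expect no genuinely hard step, since every inequality above is a comparison inside a totally ordered poset. The only point demanding care is bookkeeping: which competitor pairs genuinely lie in $\Delta\times\Gamma$ (this is where cocompleteness of $\Delta$, hence $Y^-_\Delta\in\Delta$, enters, whereas $Z^-_\Gamma$ need not lie in $\Gamma$), and the correct application of~\ref{JHS3} via Lemma~\ref{lem:selfDual1} to commute the colimit past both $Y\cap(-)$ and $Y^-_\Delta + (-)$.
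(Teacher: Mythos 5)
Your proof is correct and follows essentially the same route as the paper's: split on whether $Y^-_\Delta = Y$, use lexicographic minimality to force $Z = 0$ in the first case and the strict containments in the second, and compute the predecessor as $Y^-_\Delta + (Y\cap Z^-_\Gamma)$ via~\ref{JHS3} and Lemma~\ref{lem:selfDual1}. The only difference is that you spell out the two-sided comparison identifying $W^-_{\con^-(\Delta,\Gamma)}$ with the colimit of $\{Y^-_\Delta + (Y\cap Z'') \mid Z''\subobneq Z\}$, a step the paper's proof leaves implicit.
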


\begin{proof}
    We prove only (1) as the proof of (2) is dual. We first observe that since $\L$ is (AB5$^*$) and $\Gamma$ is cocomplete, it is indeed possible to write $w = y^-_\Delta \vee (y\wedge z)$ with $(y,z) \in \Delta\times\Gamma$ minimal in the sense of Lemma~\ref{lem:maxRep}(1).
    
    Now if $y^-_\Delta = y$, then we must have $z = 0$. Then for any $w' \lneq w$, we have $w' \lneq y$ and so $w^-_{\con^-(\Delta,\Gamma)} \leq y$. Moreover, if $y' \lneq y$, then because $\Delta$ is complete we have $y' = (y')^-_\Delta \vee (y' \wedge \hat{1}) \in \con^-(\Delta,\Gamma)$. In particular, since $y' \leq y_\Delta^-$, this means that $y\leq w^-_{\con^-(\Delta,\Gamma)}$. We conclude that $w^-_{\con^-(\Delta,\Gamma)} = y$.
    
    Otherwise, we have $y^-_\Delta \lneq w \leq y$. In particular, $z \neq 0$. By the (AB5) property, this means
    \begin{eqnarray*}
        w^-_{\con^-(\Delta,\Gamma)} &=& \bigvee\left\{y^-_\Delta \vee (y\wedge z') \ \middle| \ z'\in \Gamma, z' \lneq z\right\}\\
        &=& y^-_\Delta \vee \left(y\wedge \left(\bigvee\{z' \in \Gamma \mid z' \lneq z\}\right)\right)\\
        &=& y^-_\Delta \vee \left(y\wedge z^-_\Gamma\right).
    \end{eqnarray*}
    
    \vspace{-20pt} \qedhere
\end{proof}

\vspace{0.1cm}

We are now ready to prove our ``Schreier-like'' theorem.

\begin{theorem}\label{thm:schreier}\
    \begin{enumerate}
        \item Suppose $\L$ is weakly JHS, and let $\Delta, \Gamma \subseteq \L$ be chains. Then there exist bicomplete (spanning) chains $\Delta' \supseteq \Delta$ and $\Gamma' \supseteq \Gamma$ which are up-down perspective.
        \item Let $X$ be a weakly JHS object of $\Cat$ and let $\Delta$ and $\Gamma$ be spanning subobject chains of $X$. Then there exist bicomplete spanning subobject chains $\Delta' \supseteq \Delta$ and $\Gamma' \supseteq \Gamma$ which are subfactor equivalent.
    \end{enumerate}
\end{theorem}

\begin{proof}
It suffices to prove (1), as (2) will then be a consequence of Proposition~\ref{prop:subfactor_proj}.

Let $\Delta' = \overline{\Delta \cup \left\{\hat{0},\hat{1}\right\}}$ and $\Gamma' = \overline{\Gamma \cup \left\{\hat{0}, \hat{1}\right\}}$ be the bicompletions of $\Delta \cup \left\{\hat{0},\hat{1}\right\}$ and $\Gamma \cup \left\{\hat{0},\hat{1}\right\}$, respectively. Note that, by Proposition~\ref{prop:bicompletion}, $\Delta'$ and $\Gamma'$ are spanning chains which refine $\Delta$ and $\Gamma$, respectively.
We will show that $\con^-(\Delta',\Gamma')$ and $\con^-(\Gamma',\Delta')$ are up-down perspective.

Let $w \in \con^-(\Delta',\Gamma')$. Since $\L$ is (AB5$^*$), we can write $w = y_{\Delta'}^- \vee (y \wedge z)$ with $(y,z) \in \Delta'\times\Gamma'$ minimal in the sense of Lemma~\ref{lem:maxRep}(1). Now if $y_{\Delta'}^- = y$, then $w^-_{\con^-(\Delta',\Gamma')} = w$ by Lemma~\ref{lem:maxSuccessor}. In particular, this means $\left[w^-_{\con^-(\Delta',\Gamma')}, w\right] \notin \seg(\con^-(\Delta',\Gamma'))$. Otherwise, again by Lemma~\ref{lem:maxSuccessor}, we have that 
    $$w^- := w^-_{\con^-(\Delta',\Gamma')} = y^-_{\Delta'} \vee \left(y \wedge z_{\Gamma'}^-\right).$$
    We denote $v:= z_{\Gamma'}^- \vee (z \wedge y)$ and $v' := z_{\Gamma'}^- \vee \left(z \wedge y_{\Delta'}^-\right)$. To simplify notation,
denote $\mathcal{Y} = [y_{\Delta'}^-, y]$, $\mathcal{W} = [w^-, w]$, $\Omega = [v', v]$, and $\mathcal{Z} = [z_{\Gamma'}^-,z]$. By construction, we have $\mathcal{W} = \pi_\mathcal{Y}(\mathcal{Z})$ and $\Omega = \pi_\mathcal{Z}(\mathcal{Y})$. Thus we can apply Lemma~\ref{lem:butterflyLattice} and conclude that $\mathcal{W} \pers \Omega$. In particular, this means $\mathcal{W} \in \seg(\con^-_{\mathcal{S},\mathcal{T}})$ if and only if $v' \neq v$. It follows that if $[w^-,w]$ is a prime chain, then $(z,y)$ is the minimal representative of $v$ in $\con^-(\Gamma',\Delta')$ and that $v' = v^-_{\con^-(\Gamma',\Delta')}$. In particular, we have $[v',v] \in \seg(\con^-(\Gamma',\Delta'))$.

By symmetry, there is a bijection
    $$\Phi: \left\{w \in \con^-(\Delta',\Gamma') \mid w \neq w^-_{\con^-(\Delta',\Gamma')}\right\} \rightarrow \left\{v \in \con^-(\Gamma',\Delta') \mid v \neq v^-_{\con^-(\Gamma',\Delta')}\right\}$$
so that $\left[w^-_{\con^-(\Delta',\Gamma')}, w\right] \pers \left[\Phi(w)^-_{\con^-(\Gamma',\Delta')}, \Phi(w)\right]$ given by sending the minimal representative $w = y_{\Delta'}^- \vee (y \wedge z)$ to $z^-_{\Gamma'} \vee (y \wedge z)$. We conclude that $\con^-(\Delta',\Gamma')$ and $\con^-(\Gamma',\Delta')$ are up-down perspective as claimed.
\end{proof}

From this, we deduce our ``Jordan--H\"older-like theorem''.

\begin{theorem}[Theorem \ref{thm:intro:Jordan--Holder}]\label{thm:Jordan--Holder}\
    \begin{enumerate}
        \item Let $\L$ be a weakly Jordan--H\"older--Schreier (complete bounded modular) lattice. Then
        \begin{enumerate}
            \item There exists a prime chain in $\L$.
            \item Let $\Delta, \Gamma \subseteq \L$ be prime chains. Then there is a unique bijection $\Phi: \left\{y \in \Delta \ \middle| \ y \neq y^-_\Delta\right\} \rightarrow \left\{z \in \Gamma \ \middle| \ z \neq z^-_\Gamma\right\}$ which induces an up-down perspectivity between $\Delta$ and $\Gamma$.
        \end{enumerate}
        \item Let $\mathcal{A}$ be a skeletally small abelian category and let $X$ be a weakly Jordan--H\"older--Schreier object in $\mathcal{A}$. Then there exists a composition series for $X$ and any two composition series for $X$ are subfactor equivalent.
    \end{enumerate}
\end{theorem}

\begin{proof}
    It suffices to prove (1), as (2) will then follow from Propositions~\ref{prop:comp_cover} and~\ref{prop:subfactor_proj}.

    The statement of (1a) is already included in Theorem~\ref{prop:compExists}. Thus let $\Delta, \Gamma$ be two prime chains. Then $\Delta$ and $\Gamma$ are maximal by Theorem~\ref{prop:compExists}. In particular, both are bicomplete and we have $\con^-(\Delta,\Gamma) = \Delta$ and $\con^-(\Gamma,\Delta) = \Gamma$ by Lemma~\ref{prop:conflation}. It then follows from Theorem~\ref{thm:schreier} that $\Delta$ and $\Gamma$ are up-down perspective. Finally, the fact that the bijection inducing the bijective equivalence is unique follows from Lemma~\ref{lem:unique_proj}.
\end{proof}

\begin{remark}
    Recall from Example~\ref{ex:refineableCompSeries} that an upper or lower composition series may admit a proper refinement. Thus, our proof of Theorem~\ref{thm:Jordan--Holder} cannot be readily modified to prove a ``Jordan--H\"older-like theorem'' for upper or lower composition series. Moreover, we have indeed made use of both the axioms the (AB5) property and the (AB5$^*$) property in our proof of Theorem~\ref{thm:Jordan--Holder}. For example, both were necessary in deducing Lemma~\ref{lem:maxSuccessor} from Lemmas~\ref{lem:bicompleteConflation} and~\ref{lem:maxRep}.
\end{remark}

%%%%%%%%%%%%%%%%%%%%%%%%%%%%%%%%%%%%%%%%%%%%%%%%%%%%%%

\subsection{The Stitching Lemma}\label{sec:stitching}

In this section, we examine how one may ``stitch together'' filtrations from multiple intervals (or subquotients in an abelian category) to obtain a filtration of a given lattice or object. In particular, this leads to Theorem~\ref{thm:intro:inducedInclusion}, which shows that categories in which every object is (weakly) JHS
offer a natural generalization of length categories.

In light of Theorem~\ref{thm:Jordan--Holder}, it makes sense to talk about the composition factors of an object $X$ in $\mathcal{A}$, rather than just of a specific composition series. To that end, we establish the following notational conventions.

\begin{notation}\label{nota:subfact_of_obj}\
    \begin{enumerate}
        \item Suppose $\L$ is weakly JHS, and let $\Delta \subseteq \L$ be a prime chain. We denote $\cov_\pi(\L) := \cov_\pi(\Delta)$. Note that $\cov_\pi(\L)$ does not depend on $\Delta$ by Theorem~\ref{thm:Jordan--Holder}.
        \item Let $X$ be a weakly JHS object in $\Cat$. Let $\Delta$ be any composition series of $X$. We denote $\fac(X) := \fac(\Delta)$. Note that (up to isomorphism) $\fac(X)$ does not depend on $\Delta$ by Theorem~\ref{thm:Jordan--Holder}.
    \end{enumerate}
\end{notation}

To prepare for the main results of this section, we also need the following straightforward lemma.

\begin{lemma}[``The Stitching Lemma'']\label{lem:stitching}\
    \begin{enumerate}
        \item Suppose $\L$ is complete. Let $\mathfrak{S}, \mathfrak{T} \subseteq 2^{(2^\L)}$ be sets of segments which are closed with respect to up-down perspectivity. If $\L$ has an $\mathfrak{S}$-filtration $\Delta_\mathfrak{S}$ and each $\mathcal{S} \in \seg(\Delta_\mathfrak{S})$ has a $\mathfrak{T}$-filtration $\Gamma_\S$, then $\Delta_\mathfrak{T} := \Delta_\mathfrak{S} \cup \left(\bigcup_{\S \in \seg(\Delta_\mathfrak{S})} \Gamma_\S\right)$ is a $\mathfrak{T}$-filtration (of $\L$) which refines $\Delta_\mathfrak{S}$. Moreover, this filtration satisfies $\seg(\Delta_\mathfrak{T}) = \bigsqcup_{\S \in \seg(\Delta_\mathfrak{S})}\seg(\Gamma_Y)$.
        \item Let $X$ be a subobject complete object in $\Cat$. Let $\mathcal D$ and $\mathcal E$ be subcategories of $\mathcal A$ which are closed under isomorphisms. Suppose that $X$ has a $\mathcal{D}$-filtration $\Delta_\mathcal{D}$, and that each $W_Y:= Y/Y^-_{\Delta_\mathcal{D}}$, has a $\mathcal{E}$-filtration $\Gamma_Y$. Given $j_{Y,V}: V \hookrightarrow W_Y$ in $\Gamma_Y$, denote by $h_{Y,V}: Z_{Y,V} \rightarrow Y$ the kernel of the map $Y \twoheadrightarrow W_Y \twoheadrightarrow W_Y/V$, and let $\iota_{Y,V} = \iota_Y \circ h_{Y,V}$. Then
        $$\Delta_\mathcal{E}:= \Delta_{\mathcal{D}} \cup \left(\bigcup_{W_Y \in \fac(\Delta_\mathcal{D})} \{\iota_{Y,V}:Z_{Y,V} \hookrightarrow X \mid V \in \Gamma_Y\}\right)$$
        is an $\mathcal{E}$-filtration of $X$ which refines $\Delta_\mathcal{D}$. Moreover, this filtration satisfies $$\fac(\Delta_\mathcal{E}) = \bigsqcup_{Y \in \fac(\Delta_\mathcal{D})}\seg(\Gamma_Y).$$
    \end{enumerate}
\end{lemma}

\begin{proof}
    We prove only (1), as in light of Proposition~\ref{prop:subQuotients} the proof of (2) is analogous.
    
    It is clear from the construction that $\Delta_\mathfrak{T}$ is a chain which refines $\Delta_\mathfrak{S}$. We next show that $\Delta_\mathfrak{T}$ is bicomplete. Let $\Gamma \subseteq \Delta_\mathfrak{T}$ be nonempty and note that $\bigvee \Gamma$ exists because $\L$ is complete. Now if there exists $\S \in \seg(\Delta_\mathfrak{S})$ such that $\bigvee \Gamma \in \Gamma_\S$, we are done. Otherwise, denote
    $$\Gamma' := \left\{y \in \Delta_\mathfrak{S} \ \middle| \ \exists y' \in \Gamma \text{ such that } y^-_{\Delta_\mathfrak{S}} \leq y' \leq y\right\}.$$
    The fact that each $\Delta_\mathfrak{S}$ and all of the $\Gamma_\S$ are complete then implies that $\bigvee \Gamma = \bigvee \Gamma' \in \Delta_\mathfrak{S} \subseteq \mathfrak{T}$. The proof that $\Delta_\mathfrak{T}$ is cocomplete is similar.
    
    It remains so show that $\seg(\Delta_\mathfrak{T}) = \bigsqcup_{\S \in \seg(\Delta_\mathfrak{S})}\seg(\Gamma_Y)$. To see this, let $y \in \Delta_\mathfrak{T}$. If $y \in \Delta_\mathfrak{S}$ and $y_{\Delta_\mathfrak{S}}^- = y$, then clearly $y_{\Delta_\mathfrak{T}}^- = y$ as well. Otherwise, there exists a unique segment $\S \in \seg(\Delta_\mathfrak{S})$ such that $y \in \S \setminus \{\bigwedge \S\}$. Once again, we will then have $y_{\Delta_\mathfrak{T}}^- = y_{\Delta_\S}^-$. This implies the result.
\end{proof}

\begin{remark}
    Note that in Lemma~\ref{lem:stitching}(1), it is necessary to explicitly include $\Delta_\mathfrak{S}$ as a subset of $\Delta_\mathfrak{T}$, or we will lose those $y \in \Delta_\mathfrak{S}$ which satisfy $y^-_{\Delta_\mathfrak{S}} = y$. The situation is analogous in Lemma~\ref{lem:stitching}(2).
\end{remark}

As a consequence, we obtain the final result of this section. Note in particular that Theorem~\ref{cor:inducedInclusion} generalizes well-known properties of length categories. 

\begin{theorem}[Theorem~\ref{thm:intro:inducedInclusion}]\label{cor:inducedInclusion}\
    \begin{enumerate}
        \item Let $\L$ be weakly Jordan--H\"older--Schreier, and let $y \in \L$. Let $\Delta_0 \subseteq [\hat{0},y]$ and $\Delta_1 \subseteq [y,\hat{1}]$ be prime chains in the sublattices $[\hat{0},y]$ and $[y,\hat{1}]$, respectively. Then
    \begin{enumerate}
        \item $\Delta_0 \cup \Delta_1 \subseteq \L$ is a prime chain. Moreover, this prime chain induces a decomposition
        $$\cov_\pi(\L) = \cov_\pi[\hat{0},y] \sqcup \cov_\pi[y,\hat{1}].$$
        \item Suppose $[y,\hat{1}]$ is Jordan--H\"older--Schreier. Then $y = \hat{1}$ if and only if the inclusion $\cov_\pi[\hat{0},y] \subseteq \cov_\pi(\L)$ is an equality.
        \item Suppose $[\hat{0},y]$ is Jordan--H\"older--Schreier. Then $y = \hat{0}$ if and only if the inclusion $\cov_\pi[y,\hat{1}] \subseteq \cov_\pi(\L)$ is an equality.
    \end{enumerate}
    \item Let $X$ be a Jordan--H\"older--Schreier object in a well-powered abelian category $\Cat$. Let $Y \subob X$ be objects in $\Cat$ and consider the exact sequence
        $$0 \rightarrow Y \xrightarrow{\iota} X \xrightarrow{q} X/Y \rightarrow 0.$$
    Let $\Delta_Y$ and $\Delta_{X/Y}$ be composition series of $Y$ and $X/Y$, respectively. Then:
    \begin{enumerate}
        \item There is a composition series of $X$ given by $\Delta_Y \bigcup q^{-1}(\Delta_{X/Y})$. Moreover, this composition series induces a bijection
        $$\fac(Y)\sqcup \fac(X/Y) \cong \fac(X).$$
        \item If $X/Y$ is Jordan--H\"older--Schreier, then $Y = X$ if and only if the induced inclusion $\fac(Y) \hookrightarrow \fac(X)$ is a bijection.
        \item If $Y$ is Jordan--H\"older--Schreier, then $Y = 0$ if and only if the induced inclusion $\fac(X/Y) \hookrightarrow \fac(X)$ is a bijection.
    \end{enumerate}
    \end{enumerate}
\end{theorem}

\begin{proof}
    We prove only (1) as the proof of (2) is similar. Suppose $\L$ is weakly JHS and let $y$, $\Delta_0$, and $\Delta_1$ be as in the theorem. Recall that $[\hat{0},y]$ and $[y,\hat{1}]$ are themselves weakly JHS by Proposition~\ref{prop:subquotientJHS}, and thus it makes sense to talk about $\cov_\pi[\hat{0},y]$ and $\cov_\pi[y,\hat{1}]$. In particular, (1a) is the special case of Lemma~\ref{lem:stitching}(1) where $\mathfrak{S}$ is the set of segments which are projectively equivalent to either $[\hat{0},y]$ or $[y,\hat{1}]$ and $\mathfrak{T}$ is the set of prime segments. For (1b), we have that the inclusion $\cov_\pi[\hat{0},y] \subseteq \cov_\pi\L$ is an equality if and only if $\cov[y,\hat{1}] = \emptyset$. By the assumption that $[y,\hat{1}]$ is JHS, this is equivalent to $y = \hat{1}$. Finally, (1c) is the dual of (1b).
\end{proof}
%%%%%%%%%%%%%%%%%%%%%%%%%%%%%%%%%%%%%%%%%%%%%

\section{Subobject Completeness and Categorical Limits and Colimits}\label{sec:limits_colimits}

In this section, we discuss the relationship between the completeness of the lattice $\Sub(X)$ and the existence of limits and colimits of directed systems in $\Cat$. In particular, this highlights some subtle changes which can occur when one switches to working in the category $\Cat^{op}$. Finally, we discuss some consequences of assuming that all objects in $\mathcal{A}$ satisfy the (weakly) JHS axioms.

\subsection{ Bounded Direct Limits and Subobject Completeness}\label{sec:cat_complete}

The purpose of this section is to compare supremums and infimums in the lattice $\Sub(X)$ with colimits and limits in the category $\Cat$. The following definition is the starting point of our comparison. Recall that a functor $F: I \rightarrow \mathcal{A}$ for $I$ a directed set is called a \textdef{directed diagram} (of shape $I$ in $\mathcal{A}$).

\begin{definition}\label{def:bounded_diagrams}\
    Let $I$ be a directed set and $F: I \rightarrow \mathcal{A}$ a directed diagram.
        \begin{enumerate}
            \item Let $X$ be an object in $\mathcal{A}$. We say that $F$ is \textdef{$X$-bounded} if it factors through the forgetful functor $\Sub(X) \rightarrow \mathcal{A}$.
            \item We say that $F$ is \textdef{bounded} if there exists an object $X$ in $\mathcal{A}$ such that $F$ is $X$-bounded.
        \end{enumerate}
\end{definition}

All of the directed diagrams considered in this section will be bounded. Thus we will adopt the common convention of identifying a diagram with its image. For example, let $X$ be an object of $\mathcal{A}$, $I$ a directed set, and $F:I \rightarrow \Sub(X)$ a directed diagram in $\Sub(X)$. For $\alpha \in I$, we denote $X_\alpha = F(\alpha)$. We then can then identify the image of $F$ with a direct set $\Delta = \{X_\alpha\mid \alpha \in I\} \subseteq \Sub(X)$. By construction, we have $\varinjlim_{\alpha} F(\alpha) = \bigvee \Delta$ (in the sense that the direct limit exists if and only if the supremum exists, and if both exist then they coincide). Similarly, if we assume that $I$ is downward directed, then by construction we have $\varprojlim_\alpha F(\alpha) = \bigwedge \Delta$.

We now consider the $X$-bounded diagram $U\circ F$, where $U: \Sub(X) \rightarrow \mathcal{A}$ is the forgetful functor. Note in particular that the $X$-bounded property means that, for $\alpha \leq \beta \in I$ (identified with the unique morphism $\alpha \rightarrow \beta$), we have that $U\circ F(\alpha\leq \beta) = \iota_{\alpha,\beta}: X_\alpha \rightarrow X_\beta$ is the inclusion map.

We first suppose that $I$ is downward directed and consider the (inverse) limit of the diagram $U\circ F$, which we denote $\lim\Delta := \varprojlim_\alpha U\circ F(\alpha)$. Suppose there exists an object $Z$ of $\mathcal A$ with morphisms $g_{\alpha}:Z\to X_\alpha$ for each $X_=\alpha\in\Delta$ such that $g_{\beta} = \iota_{\alpha,\beta}\circ g_{\alpha}$ whenever $\alpha \leq \beta$. Then $\im g_{\alpha} = \im g_{\beta}$ whenever $\alpha \leq \beta$. Now let $G=\im g_{\alpha}$ for some $X_\alpha \in \Delta$, and let $\kappa:G\hookrightarrow Y$ be the induced inclusion. Then each $g_{\beta}$ factors through $G$, and $(G,\iota_{\alpha}\circ \kappa)$ is in $\Sub(X)$. From this it follows that if $\lim \Delta$ or $\bigwedge \Delta$ exists then they both exist and $\lim \Delta \cong U(\bigwedge \Delta)$. For emphasis, we record this observation for use in Section~\ref{sec:examples}.
    
\begin{proposition}\label{prop:limit}
    Let $X$ be an object in $\Cat$, and suppose that $X$ is subobject complete. Then for any downward directed $\Delta \subseteq \Sub(X)$, the limit $\lim\Delta$ exists and is equal to $\bigwedge \Delta$.
\end{proposition}
    
    On the other hand, returning to the case where $I$ is directed, it may be the case that $\displaystyle \colim\Delta := \lim_{\rightarrow_\alpha} U\circ F(\alpha) \neq \bigvee \Delta$, or even that $\bigvee \Delta$ exists while $\colim\Delta$ does not. Indeed, suppose $\colim\Delta$ exists.
    Since $U$ is a solution to $U\circ F$, there will be a unique morphism $\colim \Delta \rightarrow X$. If this is a monomorphism, we will say that $\colim\Delta$ is a subobject of $X$, and we will indeed have that $\colim\Delta \cong U(\bigvee \Delta)$ in this case. The following shows that this map may not, however, be a monomorphism.
    
    \begin{example}\label{ex:coGroth}
    Let $\Veck$ be the category of vector spaces over a field $\field$, and let $X = \bigoplus_{i = 1}^\infty \field$. For $n \in \N$, denote $\bigoplus_{i = 1}^n\field$ the quotient of $X$ obtained by restricting to the first $n$ coordinates. Then
       \begin{displaymath}
       \prod_{i=1}^\infty \field = \lim \left(\cdots \twoheadrightarrow \bigoplus_{i=1}^n \field \twoheadrightarrow \bigoplus_{i=1}^{n-1} \field \twoheadrightarrow \cdots \twoheadrightarrow \field\oplus \field\twoheadrightarrow \field\to 0\right)\not\cong \bigoplus_{i=1}^\infty \field = X.
    \end{displaymath}
    In particular, $\bigoplus_{i=1}^\infty \field$ does not surject onto $\prod_{i=1}^\infty \field$. If we view the chain of quotient objects above as a subobject chain $\Delta$ in $\Veck^{op}$, we then have $\bigvee \Delta = X$ and $\colim\Delta = \prod_{i = 1}^\infty \field$, which is not a subobject of $X$.
    \end{example}

This discussion leads us to the following definitions, which are standard in the literature.

\begin{definition}\label{def:exact_limits}
    Let $X$ be an object in $\mathcal{A}$.
    \begin{enumerate}
        \item We say that $\mathcal{A}$ has \textdef{($X$-)bounded direct limits} if every ($X$-)bounded diagram in $\mathcal{A}$ has a limit.
        \item We say that $\mathcal{A}$ has \textdef{exact ($X$-)bounded direct limits} if it has ($X$-)bounded direct limits and, given a compatible system of monomorphisms $(\kappa_\alpha: X_\alpha \rightarrow Y_\alpha)_{\alpha \in I}$ between two ($X$-)bounded diagrams, the unique compatible morphism $\varinjlim_\alpha\kappa_\alpha: \varinjlim_\alpha X_\alpha \rightarrow \varinjlim_\alpha Y_\alpha$ is a monomorphism.
    \end{enumerate}
\end{definition}

\begin{remark}\label{rem:catComp1}
        It follows from taking $Y_\alpha = X$ for all $\alpha$ that if $\mathcal{A}$ has exact $X$-bounded direct limits then $\colim \Delta$ exists and is equal to $\bigvee \Delta$ for any directed set $\Delta \subseteq \Sub(X)$. In particular, this says means that if $\mathcal{A}$ has exact $X$-bounded direct limits, then $X$ is subobject complete.
\end{remark}

Note that Example~\ref{ex:coGroth} shows that having exact bounded direct limits is indeed a stronger condition than being subobject complete.

%%%%%%%%%%%%%%%%%%

\subsection{Bounded Direct Limits and the JHS Axioms}\label{sec:selfDual}

Recall that a cocomplete abelian category $\mathcal{A}$ is said to satisfy Grothendieck's axiom (AB5) if it has exact direct limits. Recall also from Remark~\ref{rem:moduleJHS3} that this axiom is equivalent to requiring that $Z \cap (\sum \Delta) = \sum(Z \cap \Delta)$ for all objects $Z$ and directed systems $\Delta$. The purpose of this section is to establish a ``bounded'' analog of this result (Theorem~\ref{thm:catComp_AB5} and Corollary~\ref{cor:catComp_AB5_2}). Note in particular that the results of this section do not require the category to be cocomplete.

We first recall that, as we have seen, the existence of limits and colimits of subobject chains do not imply one another. This lack of self-duality is absent in Sections~\ref{sec:subobjectChains} and~\ref{sec:JHSobjects}, since the notion of subobject completeness is self-dual. For example, the following result can be compared with Proposition~\ref{prop:bicompleteIsLattice}(2), which shows that subobject completeness is inherited by all subquotients. See also Proposition~\ref{prop:subQuotients}.

\begin{proposition}\label{prop:subCatComplete}\
    Let $X$ be an object of $\Cat$.
    \begin{enumerate}
        \item If $\mathcal{A}$ has exact $X$-bounded direct limits, then $\mathcal{A}$ has $X'$-bounded direct limits for every subobject $X' \in \Sub(X)$.
        \item If $\mathcal{A}^{op}$ has exact $X$-bounded direct limits, then $\mathcal{A}^{op}$ has $X'$-bounded direct limits for every quotient object $X' \in \Sub^{op}(X)$.
    \end{enumerate}
\end{proposition}

\begin{proof}
    We prove only (1) as (2) is dual. Suppose $\mathcal{A}$ has exact $X$-bounded direct limits. Let $Y \subob X$ with inclusion map $\iota: Y \hookrightarrow X$. Let $\emptyset \neq \Delta \subseteq \Sub(Y) \subseteq \Sub(X)$ be a directed set. By assumption, we have that $\colim\Delta$ exists and there are natural maps $c_X: \colim\Delta \rightarrow X$ and $c_Y: \colim\Delta \rightarrow Y$, with $c_X$ being a monomorphism. By the universal property of the colimit, it follows that $c_X = \iota \circ c_Y$, and so $c_Y$ is also a monomorphism. This proves the result.
\end{proof}

We also need the following, the proof of which we defer to the appendix.

\begin{lemma}[Lemma~\ref{lem:cat_complete_appendix}]\label{lem:catComplete}
    Let $X$ be an object in $\Cat$. Then the following are equivalent.
    \begin{enumerate}
        \item For every chain $\Delta \subseteq \Sub(X)$, the direct limit $\colim\Delta$ exists (in $\mathcal{A}$) and is a subobject of $X$.
        \item For every well-ordered chain $\Delta \subseteq \Sub(X)$, the direct limit $\colim \Delta$ exists (in $\Cat$) and is a subobject of $X$.
        \item $\mathcal{A}$ has exact $X$-bounded direct limits.
    \end{enumerate}
\end{lemma}

We are now prepared to prove the first main theorem of this section.

\begin{theorem}\label{thm:catComp_AB5}
Let $X$ be an object in $\Cat$. Then the following are equivalent.
\begin{enumerate}
    \item $\mathcal{A}$ has exact $X$-bounded direct limits.
    \item For every directed subset $\Delta \subseteq \Sub(X)$, the direct limit $\colim\Delta$ exists (in $\Cat$) and is (AB5).
    \item For every chain $\Delta \subseteq \Sub(X)$, the direct limit $\colim\Delta$ exists (in $\Cat$) and is (AB5).
    \item For every well-ordered chain $\Delta \subseteq \Sub(X)$, the direct limit $\colim\Delta$ exists (in $\Cat$) and is (AB5).
 \end{enumerate}
   In particular, if $\mathcal{A}$ has exact $X$-bounded direct limits, then $X$ is (AB5).
\end{theorem}

\begin{proof}
    The implications (2)$\implies$(3)$\implies$(4) are trivial. Moreover, given the equivalence between (1) and (2), one can show the ``in particular'' part by applying (2) to the directed subset $\{X\} \subseteq \Sub(X)$. Thus it remains to prove that (1)$\implies$(2) and that (4)$\implies$(1).
    
    (1)$\implies$(2): Suppose (1) holds, and let $\Delta \subseteq \Sub(X)$ be a directed subset. Then $\colim\Delta$ exists and is a subobject of $X$ by (1) and Lemma~\ref{lem:catComplete}. In particular, $\mathcal{A}$ has exact $(\colim\Delta)$-bounded direct limits by Lemma~\ref{prop:subCatComplete}.
    
    It remains to show that $\colim\Delta$ is (AB5). Let $\Delta' \subseteq \Sub(\colim\Delta)$ be a subobject chain, and let $Z \subob \colim\Delta$ be a subobject. Then for each $Y \in \Delta$, there is a commutative diagram
    $$\begin{tikzcd}
        Z \cap Y \arrow[r,hook] \arrow[d,hook] & Y\arrow[d,hookrightarrow]\arrow[ddr,hook',bend left]\\
        Z \arrow[r,hook]\arrow[drr,hook,bend right] & Z + Y\arrow[dr,hook]\\
        &&\colim\Delta
    \end{tikzcd}$$
    where the outer square is a pullback and the inner square is bicartesian. Now as $\Cat$ has exact ($\colim\Delta$)-bounded direct limits, taking colimits (with respect to $Y \in \Delta$) yields a commutative diagram
        $$\begin{tikzcd}
        \bigvee(Z \cap \Delta') \arrow[r,hook] \arrow[d,hook] & \bigvee \Delta'\arrow[d,hookrightarrow]\arrow[ddr,hook',bend left]\\
        Z \arrow[r,hook]\arrow[drr,hook,bend right] & \bigvee(Z + \Delta')\arrow[dr,hook]\\
        &&\colim\Delta
    \end{tikzcd}$$
    where, since pushouts are colimits, the inner square remains a pushout. The fact that all of the maps are monomorphisms then implies that the inner square is bicartesian. Since the morphism $\bigvee(Z + \Delta') \rightarrow \colim(\Delta)$ is a monomorphism, this means the outer square must be a pullback as well. However, the pullback of $Z \hookrightarrow \colim\Delta \hookleftarrow \bigvee \Delta'$ is by definition $Z \cap (\bigvee \Delta')$. We conclude that $\bigvee(Z\cap \Delta') = Z \cap (\bigvee \Delta')$ as desired.
    
    (4)$\implies$(1): Suppose (4) holds, and let $\Delta \subseteq \Sub(X)$ be a well-ordered chain. We have that $\colim\Delta$ exists by (4), so in light of Lemma~\ref{lem:catComplete} we need only show that the canonical morphism $c: \colim\Delta \rightarrow X$ is a monomorphism. For each $Y \in \Delta$, let $\iota_Y: Y \hookrightarrow X$ and $\varepsilon_Y: Y \hookrightarrow \colim\Delta$ be the inclusion maps. Note in particular that $c \circ \varepsilon_Y = \iota_Y$, which is why $\varepsilon_Y$ must be mono. Moreover, this implies that $\varepsilon_Y(Y) \cap \ker(c) = 0$ for all $Y \in \Delta$. Finally, we note that $\Gamma := \{\varepsilon_Y(Y) \mid Y \in \Delta\} \subseteq \Sub(\colim\Delta)$ is a subobject chain which satisfies $\bigvee \Gamma = \colim\Delta$ (where the supremum is computed in $\Sub(\colim\Delta)$). Since $\colim\Delta$ is (AB5) by the assumption of (2), this means
    $$0 = \bigvee\left(\ker(c) \cap \Gamma\right) = \ker(c) \cap \left(\bigvee\Gamma\right) = \ker(c).$$
    We conclude that $c$ is a monomorphism as desired.
\end{proof}

As an immediate corollary, we conclude the following.

\begin{corollary}\label{cor:catComp_AB5}
    Let $X$ be an object in $\Cat$.
    \begin{enumerate}
        \item If $\mathcal{A}^{op}$ has exact $X$-bounded direct limits, then $X$ is (AB5$^*$).
        \item If both $\mathcal{A}$ and $\mathcal{A}^{op}$ have $X$-bounded direct limits, then $X$ is weakly JHS.
    \end{enumerate}
\end{corollary}

Another consequence of Theorem ~\ref{thm:catComp_AB5} is that if \emph{every} object of $\mathcal{A}$ is (AB5), then $\mathcal{A}$ has exact bounded direct limits if and only if every subobject chain (of every object in $\mathcal{A}$) has a direct limit. The goal of the remainder of this section is to prove that the existence of these direct limits is actually a consequence of the global (AB5) assumption. In particular, this will imply that if every object of $\mathcal{A}$ is (AB5), then for any $X \in \mathcal{A}$ and $\Delta \subseteq \Sub(X)$, we will have that $\bigvee \Delta$ is the direct limit of $\Delta$.

A proof of the following technical lemma can be found in the appendix.

\begin{lemma}[Lemma~\ref{lem:inclusion_gluing_appendix}]\label{lem:inclusion_gluing}
    Let $X$ and $Y$ be objects in $\mathcal{A}$, and suppose that $X\oplus Y$ is (AB5). Let $\Delta = (X_\alpha)_{\alpha \in I} \subseteq \Sub(X)$ and $\Gamma = (Y_\alpha)_{\alpha \in I} \subseteq \Sub(Y)$ be directed diagrams of the same shape. For $\alpha \leq \beta \in I$, denote by $x_{\alpha,\beta}: X_\alpha \rightarrow X_\beta$, $y_{\alpha,\beta}: Y_\alpha \rightarrow Y_\beta$, $x_\alpha: X_\alpha \rightarrow X$, and $y_\beta: Y_\beta \rightarrow X$ the inclusion maps. Suppose that $\bigvee \Delta = X$ (in $\Sub(X)$) and $\bigvee \Gamma = Y$ (in $\Sub(Y)$), and that there is a system of isomorphisms $(f_\alpha: X_\alpha \rightarrow Y_\alpha)_{\alpha \in I}$ such that $f_\beta \circ x_{\alpha,\beta} = y_{\alpha,\beta} \circ f_\alpha$ for all $\alpha \leq \beta \in I$. Then there is an isomorphism $f: X\rightarrow Y$ which satisfies $(\star)$ $f \circ x_\alpha = y_\beta \circ f_\alpha$ for all $\alpha \in I$. Furthermore, $f$ is the unique morphism $X \rightarrow Y$ which satisfies $(\star)$.
\end{lemma}

We are now prepared to prove the final main results of this section.

\begin{theorem}\label{thm:AB5_sup_colim}
    Let $\mathcal{A}$ be a well-powered abelian category, and suppose that every object of $\mathcal{A}$ is (AB5). Given an object $X$ in $\mathcal{A}$ and a directed set $\Delta = (X_\alpha)_{\alpha \in I} \subseteq \Sub(X)$, the supremum $\bigvee \Delta \in \Sub(X)$ (together with the inclusion maps $X_\alpha \rightarrow \bigvee \Delta$) is the direct limit of $\Delta$ in $\mathcal{A}$.
\end{theorem}

\begin{proof}
    We assume without loss of generality that $\bigvee \Delta = X$. For $\alpha \leq \beta \in I$, we denote by $\iota_{\alpha,\beta} : X_i=\alpha \rightarrow X_\beta$ and $\iota_{\alpha,X}: X_\alpha \rightarrow X$ the inclusion maps.

    Let $Y$ be an object of $\mathcal{A}$ and let $(f_\alpha: X_\alpha \rightarrow Y)_{\alpha \in I}$ be a compatible system of morphisms (so that $f_\beta \circ \iota_{\alpha,\beta} = f_\alpha$ for all $\alpha \leq \beta \in I$). Denote $Z := \bigvee_{\alpha \in I} \im(f_\alpha) \in \Sub(Y)$ and $W:= \bigvee_{\alpha \in I} \ker(f_\alpha) \in \Sub(X)$. Now for each $\alpha \in I$, denote $\overline{X}_\alpha := (X_\alpha + W)/W \cong X_\alpha/\ker(f_\alpha)$. Then there are induced isomorphisms $\overline{f}_\alpha: \overline{X}_\alpha \rightarrow \im(f_\alpha)$ and inclusion maps $\overline{\iota}_{\alpha,\beta}: \overline{X}_\alpha \rightarrow \overline{X}_\beta$ and $\iota'_{\alpha,\beta}: \im(f_\alpha) \rightarrow \im(f_\beta)$ which satisfy $f_\beta \circ \overline{\iota}_{\alpha,\beta} = \iota'_{\alpha,\beta} \circ f_{\alpha}$ for all $\alpha \leq \beta \in I$. Denoting by $\iota_{\alpha,Z}: \im(f_\alpha) \rightarrow Z$ and $\overline{\iota}_{\alpha,X}: \overline{X}_\alpha \rightarrow X/W$ the relevant inclusion maps, Lemma~\ref{lem:inclusion_gluing} then implies that there is a unique morphism $\overline{f}: X/W \rightarrow Z$ which satisfies $\overline{f}\circ\overline{\iota}_{\alpha,X} = \iota_{\alpha,Z} \circ \overline{f}_\alpha$ for all $\alpha \in I$, and moreover that $\overline{f}$ is an isomorphism.

    Now let $q: X \twoheadrightarrow \overline{X}$ and $\iota_{Z}: Z \rightarrow Y$ be the quotient and inclusion maps, and define $f := \iota_{Z} \circ \overline{f} \circ q$. It is straightforward to see that $f \circ \iota_{\alpha,X} = \iota_{\alpha,Y} \circ f_\alpha$ (where $\iota_{\alpha,Y}: \im(f_\alpha) \rightarrow Y$ is the inclusion map) for all $\alpha \in I$. It remains to show that $f$ is the unique morphism with this property.

    Let $g: X \rightarrow Y$ be a morphism which satisfies $g \circ \iota_{\alpha,X} = \iota_{\alpha,Y} \circ f_\alpha$ for all $\alpha \in I$. Then $\im(g) = \bigvee_{\alpha \in I} \im(f_\alpha) = \im(f)$ (see e.g. \cite[Corollary~70]{abelian lecture notes}). Moreover, we have $\ker(f) = \bigvee_{\alpha \in I} \ker(f_\alpha) \subob \ker(g)$. Thus there is an induced morphism $\overline{g}: \overline{X} \rightarrow Z$ such that $g = \iota_Z \circ \overline{g} \circ q$. We conclude that $\overline{g} = \overline{f}$, and hence $g = f$, by Lemma~\ref{lem:inclusion_gluing}.
\end{proof}

\begin{corollary}\label{cor:catComp_AB5_2}
    Let $\mathcal{A}$ a well-powered abelian category. Then:
    \begin{enumerate}
        \item $\mathcal{A}$ has exact bounded direct limits if and only if every object of $\Cat$ is (AB5).
        \item $\mathcal{A}^{op}$ has exact bounded direct limits if and only if every object of $\Cat$ is (AB5$^*$).
        \item Both $\Cat$ and $\Cat^{op}$ have exact bounded direct limits if and only if every object of $\Cat$ is weakly JHS.
    \end{enumerate}
\end{corollary}

\begin{proof}
    We prove only (1), as (2) is dual and (3) follows from (1) and (2). The forward implication is contained in Theorem~\ref{thm:catComp_AB5}, so suppose that every object of $\Cat$ is (AB5). Let $X$ be an object of $\mathcal{A}$ and $\Delta = (X_\alpha)_{\alpha \in I} \subseteq \Sub(X)$ an $X$-bounded diagram. By assumption, we have that $\Sub(X)$ is complete, and so $\bigvee \Delta$ exists. Moreover, $\bigvee \Delta = \colim \Delta$ is the direct limit of $\Delta$ (in $\mathcal{A}$) by Theorem~\ref{thm:AB5_sup_colim}. It thus follows from Theorem~\ref{thm:catComp_AB5} that $\mathcal{A}$ has exact $X$-bounded direct limits. Since $X$ was arbitrary, we conclude that $\mathcal{A}$ has exact bounded direct limits.
\end{proof}

%%%%%%%%%%%%%%%%%%%%%%%%%%%%%%%%%%%%

\section{Examples and Discussion}\label{sec:examples}

In this section, we provide several examples and non-examples of (weakly) Jordan--H\"older--Schreier objects.

\subsection{Preliminary examples}\label{sec:preliminary}

First note that, as observed in Remark~\ref{rem:moduleJHS3}(3), in the category of all (left) modules over a ring, the weakly JHS objects are precisely the (AB5$^*$)-modules. In more general abelian categories, we also have all of the following.

\begin{example}\label{ex:preliminary}
    Let $X$ be an object in $\Cat$.
    \begin{enumerate}
        \item If $X$ has finite length, then $X$ is JHS.
        \item If $X$ is uniserial (that is, if $\Sub(X)$ is totally ordered), then $X$ is weakly JHS.
        \item If $X$ is completely distributive; that is, if $\Sub(X)$ is a complete distributive lattice, then $X$ is weakly JHS.
        \item If $\Cat$ is an (AB5) category and $X$ is artinian, then $X$ is weakly JHS.
    \end{enumerate}
\end{example}

    As a special case of Example~\ref{ex:preliminary}(4) above, note that if $R$ is commutative and noetherian, then the injective envelope $E(S)$ of a simple $R$-module $S$ will be artinian, and thus also JHS. This includes the Pr\"ufer $p$-group $\mathbb{Z}(p^\infty)$. Similarly, there is a class of Pr\"ufer modules over Leavitt path algebras that are uniserial and artinian. These are used to produce injective envelopes of simple modules (see, for example, work by Abrams, Mantese, and Tonolo \cite{amt19,amt22}). On the other hand, in Example~\ref{ex:injective} we give an example of an injective envelope of a simple object (not in a module category) which is JHS but not artinian. It is an interesting question to determine the generality in which the injective envelopes of simple objects are JHS.

    It is generally well-known that if a topological module over a topological ring is  linearly compact, then it is (AB5*).
    Taking the topology to be discrete, this is a generalization of the artinian property.
    If we consider the injective envelope of a simple module, we may ask if it is JHS; i.e., if it is (AB5*).
    One possibility is that the injective envelope is linearly compact.
    Since a simple module $S$ is essential in its injective envelope $E$, this is equivalent to asking if $E/S$ is linearly compact.
%%%%%%%%%%%%%%%%%%%%%%%%%%%%%%%%%%%%%%%%%%%%%%%%%%%%%%%%%%%%%%%%%

\subsection{Functor categories}\label{sec:pwf}
In this section, we consider functor categories in which every object in the target is (weakly) Jordan--H\"older--Schreier. In particular, this includes the category of pointwise finite-dimensional (pwf) persistence modules over any field and small category. For certain small categories, we are further able to describe the composition factors explicitly.

\begin{proposition}\label{prop:FunctorCategoryIsJHS}
    Let $\mathcal{C}$ be a small category and let $\Cat$ be a well-powered abelian category in which every object is weakly JHS. Then every object in $\Fun(\mathcal{C},\Cat)$ is weakly JHS.
\end{proposition}

\begin{proof}
    Let $M \in \Fun(\mathcal{C},\Cat)$ and let $\emptyset \neq \Delta \subseteq \Sub(M)$ be a subobject chain. 
    For $x \in \mathcal{C}$, we have that $\colim(\Delta(x)) = \bigvee \Delta(x) \in \Sub(M(x))$ by Theorem~\ref{thm:AB5_sup_colim}.It follows that $\colim(\Delta)$ exists and that the natural map $\colim(\Delta)\rightarrow M$ is a monomorphism. Therefore, $\Fun(\mathcal{C},\Cat)$ has exact $M$-bounded direct limits, and so $M$ is (AB5) by Theorem~\ref{thm:catComp_AB5}.
    
    Now let $N\subob M$ be an arbitrary subfunctor of $M$. Once again, for $x \in \mathcal{C}$, we have $\lim(\Delta(x) + N(x)) = \lim(\Delta(x))+ N(x))$ since $M(x) \in \Cat$ is (AB5$^*$). (We are using here the fact that inverse limits and infimums always coincide. See Proposition~\ref{prop:limit}.) It again follows immediately that $\lim(\Delta + N) = \lim(\Delta)+ N$ in $\Fun(\mathcal{C},\Cat)$, and so $\Fun(\mathcal{C},\Cat)$ is~(AB5$^*$).
\end{proof}

We now wish to apply Proposition~\ref{prop:FunctorCategoryIsJHS} to pointwise finite-dimensional persistence modules. To do so, we recall that a small category $\mathcal{C}$ is called directed if (a) for all objects $x\neq y \in \mathcal{C}$, at least one of $\Hom_\mathcal{C}(x,y)$ and $\Hom_\Cat(y,x)$ is empty and (b) for all objects $x \in \mathcal{C}$, $\Hom_\mathcal{C}(x,x) = \{1_x\}$. For example, any poset category is directed.

We also fix the following notation. Given an object $X$ in an abelian category $\Cat$ and an object $x$ of a directed category $\mathcal{C}$, we denote $\widetilde{X}_x:\mathcal{C}\rightarrow\Cat$ the functor which has $\widetilde{X}_x(x) = X$ and $\widetilde{X}_x(y) = 0$ for all other objects $y$ in $\mathcal{C}$. We note that $\widetilde{X}_x$ is simple in $\Fun(\mathcal{C},\Cat)$ if and only if $X$ is simple in $\Cat$.
Moreover, given a functor $M:\mathcal{C}\rightarrow \Cat$, we denote by $\supp(M)$ the set of objects $x$ of $\mathcal{C}$ for which $M(x) \neq 0$.
Finally, recall that the disjoint union of multisets is again a multiset where the multiplicity of elements is additive in the sense of cardinalities.
    
\begin{theorem}\label{thm:pwf}
    Let $\mathcal{C}$ be a small directed category, let $\Cat$ be a well-powered abelian category in which every object of $\Cat$ is JHS, and let $M$ be a functor in $\Fun(\mathcal{C},\Cat)$. Then $M$ is JHS and satisfies
    $$\fac(M) = \coprod_{x\in\supp(M)} \left\{ \wS_x \mid S\in\fac(M(x)) \right\}.$$
\end{theorem}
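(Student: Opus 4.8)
The plan is to reduce the whole statement to producing one concrete composition series of $M$. Since every object of $\Cat$ is in particular weakly JHS, Proposition~\ref{prop:FunctorCategoryIsJHS}(1) gives that $M$ is weakly JHS; in particular $M$ is subobject bicomplete, so by Theorem~\ref{thm:Jordan--Holder} it has a composition series and any two are subfactor equivalent. It therefore suffices to build one composition series $\Delta$ of $M$, compute $\fac(\Delta)$, and verify~\ref{JHS5} (which upgrades ``weakly JHS'' to ``JHS''); all three will fall out of a single explicit construction.

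For the construction I would use directedness of $\mathcal{C}$: the relation $x\le y$ iff $\Hom_\mathcal{C}(x,y)\neq\emptyset$ is a partial order on $\mathrm{Ob}\,\mathcal{C}$, and for any up-set $T$ of $(\mathrm{Ob}\,\mathcal{C},\le)$ the functor $M_T$ with $M_T(x)=M(x)$ for $x\in T$ and $0$ otherwise (and the induced transition maps) is a subfunctor of $M$, because every morphism with source in $T$ has target in $T$. More generally, if $T$ is an up-set, $x$ is minimal in its complement, and $U\subob M(x)$ in $\Cat$, then the functor $N_{T,x,U}$ equal to $M(y)$ on $T$, to $U$ at $x$, and to $0$ elsewhere is a subfunctor of $M$ (here one uses $\Hom_\mathcal{C}(x,x)=\{1_x\}$). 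Now fix, by Zorn's Lemma, a maximal chain $\mathcal{U}$ of up-sets of $\mathrm{Ob}\,\mathcal{C}$, and for each object $x$ a composition series $\Delta^x$ of $M(x)$ in $\Cat$ (which exists by Theorem~\ref{prop:compExists}, as $M(x)$ is JHS). For each $T\in\mathcal{U}$ having a successor $T^+$ in $\mathcal{U}$, write $\{x_T\}=T^+\setminus T$ (this difference is a singleton, by maximality of $\mathcal{U}$: if $a,b\in T^+\setminus T$ then $T\cup{\uparrow}a=T^+=T\cup{\uparrow}b$, forcing $a\le b\le a$), and set
\[
\Delta:=\{M_T\mid T\in\mathcal{U}\}\ \cup\ \bigcup_{T}\{N_{T,x_T,U}\mid U\in\Delta^{x_T}\},
\]
the inner union ranging over those $T\in\mathcal{U}$ with a successor.

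The next step is to check that $\Delta$ is a composition series. That $\Delta$ is a subobject chain is routine: two of its elements are compared either through the total order on $\mathcal{U}$ or, when they lie in a common ``jump'', through the total order on some $\Delta^{x_T}$. The crux is that $\Delta$ admits no proper refinement. Given a subfunctor $N$ of $M$ comparable with every element of $\Delta$, the up-sets $T_-=\bigcup\{T\in\mathcal{U}\mid M_T\subob N\}$ and $T_+=\bigcap\{T\in\mathcal{U}\mid N\subob M_T\}$ lie in $\mathcal{U}$ (a maximal chain in the complete lattice of up-sets contains its own joins and meets), and $M_{T_-}\subob N\subob M_{T_+}$; if $T_-=T_+$ then $N=M_{T_-}\in\Delta$, and otherwise $T_+$ is the successor of $T_-$, so comparing $N$ with the functors $N_{T_-,x_{T_-},U'}$ forces $N(x_{T_-})\in\Delta^{x_{T_-}}$ by maximality of $\Delta^{x_{T_-}}$, whence $N=N_{T_-,x_{T_-},N(x_{T_-})}\in\Delta$. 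By Theorem~\ref{prop:compExists}(1), since $M$ is subobject bicomplete, a maximal subobject chain is a composition series; its successive subfactors are exactly the $N_{T,x_T,U}/N_{T,x_T,U^-_{\Delta^{x_T}}}$ for $U\in\Delta^{x_T}$ with $U\neq U^-_{\Delta^{x_T}}$, and each of these is supported only at $x_T$ with value $U/U^-_{\Delta^{x_T}}\in\fac(M(x_T))$, hence isomorphic to $\wS_{x_T}$ for $S=U/U^-_{\Delta^{x_T}}$, which is simple in $\Fun(\mathcal{C},\Cat)$ since $S$ is simple in $\Cat$.

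To conclude, I would show that $T\mapsto x_T$ is a bijection from $\{T\in\mathcal{U}\mid T\text{ has a successor}\}$ onto $\mathrm{Ob}\,\mathcal{C}$: injectivity follows from maximality of $\mathcal{U}$, and for surjectivity, given $x$ the up-set $T_x=\bigcup\{T\in\mathcal{U}\mid x\notin T\}$ lies in $\mathcal{U}$, its successor is $T_x$ together with the principal up-set at $x$, and that principal up-set meets the complement of $T_x$ only in $x$ (any larger $y$ outside $T_x$ would yield an up-set strictly between $T_x$ and its successor, contradicting maximality of $\mathcal{U}$). Summing the contributions of the jumps and reindexing along this bijection gives $\fac(\Delta)=\coprod_{x\in\mathrm{Ob}\,\mathcal{C}}\{\wS_x\mid S\in\fac(M(x))\}$, where the terms with $x\notin\supp(M)$ vanish because then $M(x)=0$; this is the asserted value of $\fac(M)$. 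Finally, if $M\neq 0$ then $\supp(M)\neq\emptyset$, and for $x\in\supp(M)$ the nonzero JHS object $M(x)$ satisfies $\fac(M(x))\neq\emptyset$, so $\fac(\Delta)\neq\emptyset$, giving~\ref{JHS5} and hence that $M$ is JHS. The main obstacle is the no-proper-refinement step: it is where directedness genuinely enters and where the interaction between maximal chains of up-sets and the composition series $\Delta^x$ must be handled with care, while the remaining steps are citations or pointwise checks.
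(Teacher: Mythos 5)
Your proposal is correct, and its core construction is the same as the paper's: one explicit composition series of $M$ obtained by interleaving chosen composition series of the values $M(x)$ along a chain of subfunctors supported on up-sets of the Hom-order. The scaffolding, however, is genuinely different. The paper fixes a linear extension $\leq$ of the Hom-order on $\supp(M)$, takes the subfunctors $M_I$ (for $I$ closed above) together with the $M_{x,X}$, and then verifies by hand that this chain is bicomplete and has simple subfactors; you instead take a Zorn-maximal chain $\mathcal{U}$ of up-sets of the poset of all objects, show your chain $\Delta$ admits no proper refinement, and invoke Theorem~\ref{prop:compExists}(1). This trades the explicit cocompleteness/completeness check for the covering arguments (that $T^+\setminus T$ is a singleton, that $N(x_{T_-})\in\Delta^{x_{T_-}}$ by maximality of $\Delta^{x_{T_-}}$) and the bijection $T\mapsto x_T$ onto $\mathrm{Ob}\,\mathcal{C}$; all of these steps check out, and the maximality route is arguably cleaner.

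Two small repairs are needed. First, your auxiliary claim that $N_{T,x,U}$ is a subfunctor whenever $x$ is \emph{minimal} in the complement of $T$ is false as stated: for the poset $x<z$ with $T=\emptyset$ and $M(x)\to M(z)$ nonzero, $N_{T,x,M(x)}$ is not closed under $M(x)\to M(z)$. What is actually needed is that every object strictly above $x$ lies in $T$, i.e.\ that $T\cup\{x\}$ is again an up-set (equivalently $x$ is \emph{maximal} in the down-set complement); this does hold for the only instances you use, namely $x=x_T$ with $T\cup\{x_T\}=T^+\in\mathcal{U}$, so the construction is unaffected. Second, when you assert that the subfactors of $\Delta$ are \emph{exactly} the jump quotients, you should also dispose of the limit cases: if $T\in\mathcal{U}$ has no predecessor, then $T=\bigcup\{T'\in\mathcal{U}\mid T'\subsetneq T\}$ (a maximal chain of up-sets is closed under unions), so $(M_T)^-_\Delta=M_T$; and $N_{T,x_T,U}$ with $U=U^-_{\Delta^{x_T}}$ likewise contributes nothing. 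With these remarks your computation of $\fac(\Delta)$, and hence the formula for $\fac(M)$ and the verification of~\ref{JHS5} via subfactor equivalence (Theorem~\ref{thm:Jordan--Holder}), is complete.
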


\begin{proof}
    First note that $M$ is weakly JHS by Proposition~\ref{prop:FunctorCategoryIsJHS}. Since $\supp(M) = \emptyset$ if and only if $M = 0$, we thus need only show the subfactors of $M$ are as described.
    
    We first choose some total order $\leq$ on the objects of $\supp(M)$ so that $\Hom_\mathcal{C}(x,y) = \emptyset$ whenever $y \lneq x$ and $x,y\in\supp(M)$.
    For each $x\in \supp(M)$, we then choose a composition series $\Omega_x$ of $M(x)$.
    Now for each $X\in\Omega_x$ we define a functor $M_{x,X}$ in $\Fun(\mathcal{C},\Cat)$ as follows.
    For $y$ an object in $\mathcal{C}$,
    we set $$M_{x,X}(y) = \begin{cases} M(y) & x \lneq y\\ X & y=x\\ 0 & y\lneq x, \end{cases}$$
    
    For
    $f: y \rightarrow y'$ a morphism in $\mathcal{C}$, we then set $M_{x,X}(f) = M(f)|_{M_{x,X}(y)}$.
    It is straightforward that each $M_{x,X}$ is well defined.
    
    Furthermore, we see that $M_{x,X} \subob M_{y,Y}$ if and only if either (a) $y \lneq x$ in $\mathcal{C}$, or (b) $x = y$ in $\mathcal{C}$ and $X\subob Y$ in $\Cat$.
    Now recall that, in a total ordered set, an interval $I$ is closed above (respectively, below) if $y\in I$ and $y\leq z$ (respectively, $z\leq y$) implies $z\in I$.
    For each interval $I$ of objects in $\supp(M)$ which is closed above (with respect to $\leq$), we define a functor $M_I$ in the following way.
    For each object $x$ in $\mathcal{C}$, define
    $$M_I(y) = \begin{cases} M(y) & y\in I \\ 0 & y\notin I.\end{cases}
    $$
    For each morphism $f:y\to y'$ in $\mathcal{C}$, we set $M_I(f)= M(f)|_{M_I(y)}$.
    We note that if $I$ has an infimum (with respect to $\leq$), then there are two possibilities. Either $\inf I \notin I$, and so $M_I = M_{x,0}$, or $\inf I \in I$, and so $M_I=M_{x,M(x)}$. If, on the other hand, $I$ does not have an infimum, then $M_I$ is not equal to $M_{x,X}$ for any object $x$ in $\mathcal{C}$ and $X\in\Omega_x$.
    
    We now see that
    $$\Delta = \{M_{x,X}\mid x\in\supp(M), X\in\Omega_X\} \cup \{M_I \mid I\subseteq\supp M \text{ is closed above}\}$$ is a subobject chain of $M$.
    It remains to show that $\Delta$ is a composition series and that the subfactors of $\Delta$ are as described.
    
    Let $x\in \supp(M)$
    and recall that $\Omega_x$ is complete.
    If $X\in\Omega_x$ has a predecessor $X^-_{\Omega_x}$, denote $Y:= X/X_{\Omega_x}^-$. Then $(M_{x,X})^-_\Delta = M_{x,X^-_{\Omega_x}}$, and so $M_{x,X}/(M_{x,X})^-_\Delta$ is the simple functor $\widetilde{Y}_x$.
    If $X\in\Omega_x$ does not have a predecessor, then $(M_{x,X})^-_\Delta = M_{x,X}$.
    Now suppose that $x$ has an immediate successor $y \in \supp(M)$ under the restriction of $\leq$; that is, $y$ is such that if $x \leq z \lneq y$ and $z \in \supp(M)$, then $z = x$. In this case, it follows that $M_{x,0} = M_{y,M(y)}$.
    Otherwise, we have $(M_{x,0})^-_\Delta = M_{x,0}$.
    Finally, suppose that $I$ is an interval of objects in $\supp(M)$ that is closed above such that $M_I\neq M_{x,X}$ for any object $x\in\mathcal{C}$ and $X\in\Omega_X$.
    Note that the intervals that are closed above also form a totally ordered set under inclusion.
    Then there exists no interval $I'$ such that $I'$ is the predecessor or successor to $I$ by inclusion.
    Thus, $(M_I)^-_\Delta = M_I$ and $(M_I)^+_\Delta = M_I$.

    It follows from the above paragraph that $$\fac(M) = \coprod_{x\in\supp M} \left\{\wS_x \mid S\in\fac(M(x)) \right\}$$ as claimed.
    This means $\Delta$ is a composition series if it is bicomplete. To see this, let $\Delta' \subseteq \Delta$ and, for each object $x$ in $\supp(M)$, denote by $I_x$ the interval of objects in $\supp(M)$ which is closed above and has minimal element $x$.
    Let
    \begin{align*}
        \Be =& \{I\subseteq \supp(M) \text{ closed above} \mid \exists x\in\supp(M), X\in\Omega_x : M_{x,X}\in\Delta', I\subseteq I_x\} \\
        &\cup \{I\subseteq \supp(M) \text{ closed above} \mid \exists J\subseteq \supp(M) \text{ closed above}: M_J\in \Delta', I\subseteq J\}.
    \end{align*}
    In the poset of intervals of $\supp(M)$ (ordered by inclusion), we note that $\Be$ itself is an interval that is closed below.
    In particular, $\be = \bigcup_{I\in\Be} I$ is the supremum of $\Be$.
    If $\be$ has no infimum in $\supp(M)$, $\colim\Delta' = M_\be \in \Delta$.
    Otherwise, there are two possibilities.
    If $\inf\be\notin\be$, then $\be = I_x\setminus\{x\}$ for some object $x$ in $\supp(M)$ and $\colim\Delta' = M_\be = M_{x,0} \in \Delta$.
    If $\inf(\be)\in\be$, then $\be=I_x$ for some object $x$ in $\supp(M)$.
    Then let $\Omega_x' = \{X\in \Omega_x \mid M_{x,X}\in \Delta'\}$.
    Since $\inf(\be)\in\be$, we see $\Omega_x'\neq\emptyset$.
    It follows that $\colim\Delta' = M_{x,\colim\Omega'_x} \in \Delta$. We conclude that $\Delta$ is complete. The argument that $\Delta$ is cocomplete is similar.
\end{proof}

We now specialize Proposition~\ref{prop:FunctorCategoryIsJHS} and Theorem~\ref{thm:pwf} to obtain Theorem~\ref{thm:intro:pwf} from the introduction.

\begin{corollary}[Theorem~\ref{thm:intro:pwf}]\label{cor:pwf}\
\begin{enumerate}
    \item Let $\mathcal{C}$ be a directed small category, let $\field$ be a field, and let $M:\mathcal{C}\rightarrow \veck$ be a pointwise finite-dimensional $\mathcal{C}$-persistence module. Then $M$ is Jordan--H\"older--Schreier and satisfies
    $$\fac(M) = \coprod_{x\in\supp M} \left( \coprod_{i=1}^{\dim_\field M(x)}\left\{ \widetilde{\field}_x\right\}\right).$$
    \item Let $\Cat$ be an abelian category such that every object of $\Cat$ is Jordan--H\"older--Schreier and let $X$ be a topological space. Then any presheaf on $X$ with values in $\mathcal{A}$ is Jordan--H\"older--Schreier.
\end{enumerate}
\end{corollary}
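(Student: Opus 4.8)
The plan is to obtain both statements as immediate specializations of Theorem~\ref{thm:pwf}, so the work consists of checking that the hypotheses of that theorem hold in each case and then unwinding the resulting formula.

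For part~(1), I would apply Theorem~\ref{thm:pwf} with $\Cat = \veck$. First I would observe that $\veck$ is skeletally small (take the skeleton with one object $\field^n$ for each $n \geq 0$) and that every object of $\veck$ has finite length, hence is Jordan--H\"older--Schreier by Proposition~\ref{prop:length}. Since $\mathcal{C}$ is directed by hypothesis, Theorem~\ref{thm:pwf} then yields that $M$ is Jordan--H\"older--Schreier and that $\fac(M) = \coprod_{x\in\supp M}\{\wS_x \mid S \in \fac(M(x))\}$. It then remains to note that, for each $x\in\supp(M)$, the finite-dimensional space $M(x)$ has a composition series with $\dim_\field M(x)$ successive subfactors, each isomorphic to the unique simple object $\field$; substituting $\fac(M(x)) = \coprod_{i=1}^{\dim_\field M(x)}\{\field\}$ gives the claimed expression for $\fac(M)$.

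For part~(2), the point is that a presheaf on a topological space $X$ with values in $\Cat$ is by definition a contravariant functor from the poset of open sets of $X$ (ordered by inclusion) to $\Cat$, equivalently a covariant functor $\mathsf{Open}(X)^{op}\rightarrow\Cat$. Since the topology of $X$ is a set, $\mathsf{Open}(X)^{op}$ is a small category, and being a poset category it is directed. As $\Cat$ is skeletally small (by our standing convention) with every object Jordan--H\"older--Schreier (by hypothesis), Theorem~\ref{thm:pwf} applies with $\mathcal{C} = \mathsf{Open}(X)^{op}$ and shows that every such presheaf is Jordan--H\"older--Schreier.

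Since the corollary is essentially a bookkeeping specialization of Theorem~\ref{thm:pwf}, I do not anticipate a substantial obstacle. The only points that require any care are confirming that $\veck$ meets the hypotheses of Theorem~\ref{thm:pwf} (skeletal smallness, and that finite length implies JHS via Proposition~\ref{prop:length}) and recognizing presheaves as covariant functors on the opposite poset so that the directedness hypothesis can be invoked.
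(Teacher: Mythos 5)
Your proposal is correct and follows essentially the same route as the paper: part (1) is Theorem~\ref{thm:pwf} applied to $\veck$, whose objects are JHS by Proposition~\ref{prop:length}, and part (2) applies the theorem to the opposite of the poset of open sets, which is directed. The extra step of unwinding $\fac(M(x))$ into $\dim_\field M(x)$ copies of $\field$ is just the bookkeeping the paper leaves implicit.
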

\begin{proof}
(1) This follows immediately from Theorem~\ref{thm:pwf}.
(2) The source category $\mathcal{T}op(X)^{op}$ of a presheaf is the opposite category of a directed category, and so itself is directed.
\end{proof}

\begin{example}\label{ex:injective}
    Consider the category $\Fun(\mathbb{R},\veck)$, where $\mathbb{R}$ is considered as a poset category as in Section~\ref{sec:fun cats}. Then for $x \in \mathbb{R}$, the functor $M_{[x,x]}$ is simple with injective envelope $M_{(-\infty,x]}$. The functor $M_{(-\infty,x]}$ is JHS (and uniserial) by Corollary~\ref{cor:pwf}. However, it is not artinian since $\Sub\left(M_{(-\infty,x]}\right)$ is isomorphic (as a lattice) to $(-\infty,x] \subseteq \mathbb{R}$. Note also that one can obtain non-uniserial examples by replacing the standard poset structure of $\mathbb{R}$ with the relation that $x \preceq y$ if either $0 \leq x \leq y$ or $y\leq x \leq 0$.
\end{example}

\begin{remark}\label{rmk:sheaf}
We note that the proof technique for Corollary \ref{cor:pwf} will not work for all sheaves $\mathcal{O}$ of a space $X$ with values in $\Cat$.
Suppose there is a covering $\{U_i\}$ of $U$ in $\mathcal{T}op(X)$ and a pair $s_j\in \mathcal{O}(U_j)$ and $s_k\in\mathcal{O}(U_k)$ such that $s_j|_{U_j\cap U_k}=s_k|_{U_j\cap U_k} \neq 0$. Then $\mathcal{O}_{U,0}$ is not a sheaf.
\end{remark}

It is also natural to consider whether quasi-coherent sheaves are weakly JHS. Specifically, consider a scheme $\mathbb{X}$ with locally ringed space $\mathcal{O}_{\mathbb{X}}$. We consider the category $\mathrm{Qch}(\mathbb{X})$ of quasi-coherent sheaves of $\mathcal{O}_\mathbb{X}$ modules. It is well-known that this category is Grothendieck, and so every object $\mathcal{O}$ in $\mathrm{Qch}(\mathbb{X})$ is (AB5). In particular, $\mathcal{O}$ satisfies the hypotheses of Theorem~\ref{prop:compExists}(2), and thus admits a composition series in $\mathrm{Qch}(\mathbb{X})$.

One could then ask whether the property of being (AB5$^*$) (and thus also being weakly JHS) is a local property. More precisely, let $\mathfrak R$ be the class of rings in $\mathcal{O}_{\mathbb{X}}$.
Suppose a property $\mathfrak P$ of modules of rings in a class $\mathfrak R$ satisfies: if $\{M_i\}_{i=1}^n$ are modules over $\{R_i\}_{i=1}^n$ that satisfy $\mathfrak P$, then the module $M_1\times\cdots\times M_n$ satisfies $\mathfrak P$ as a module over $R_1\times\cdots\times R_n$.
Then we say $\mathfrak P$ is \textdef{compatible with direct products of rings in $\mathfrak R$}.
We will use the following definition and lemma from \cite{EGT}.
\begin{definition}[Definition 3.4 in \cite{EGT}]
    Let $\mathfrak P$ be a property of modules of rings in $\mathfrak R$ compatible with direct products of rings in $\mathfrak R$.
    \begin{enumerate}
        \item Suppose that for any flat $\phi:R\to S$ in $\mathfrak R$ and $R$-module $M$, if $M$ satisfies $\mathfrak P$ then $M\otimes_R S$ satisfies $\mathfrak P$ as an $S$-module.
        Then we say $\mathfrak P$ \textdef{ascends in $\mathfrak R$}.
        \item Suppose that for any faithfully flat $\phi:R\to S$ in $\mathfrak R$ and $R$-module $M$, if $M\otimes_R S$ satisfies $\mathfrak P$ as an $S$-module then $M$ satisfies $\mathfrak P$.
        Then we say $\mathfrak P$ \textdef{descends in $\mathfrak R$}.
    \end{enumerate}
    If $\mathfrak P$ both ascends and descends in $\mathfrak R$ then we say $\mathfrak P$ is an \textdef{ascent-descent} property.
\end{definition}

\begin{lemma}[Lemma 3.5 in \cite{EGT}]
    Let $\mathfrak R$ be a class of commutative rings and $\mathfrak P$ be a property of modules of rings in $\mathfrak R$ compatible with direct products of rings in $\mathfrak R$.
    If $\mathfrak P$ is an ascent-descent property then the notion of a $\mathfrak P$ quasi-coherent sheaf of $\mathcal{O}_{\mathbb{X}}$-modules is local.
\end{lemma}

Let $\mathfrak R$ be a class of commutative rings.
For two (modular) lattices $\mathcal L$ and $\mathcal L'$, we denote by
$\mathcal{L}\times\mathcal{L}'$ the product of $\mathcal{L}$ and $\mathcal{L}'$, where $\leq$, $\wedge$, and $\vee$ are given component-wise.

If $\mathcal L$ and $\mathcal L'$ are both (weakly) JHS, then so is $\mathcal{L}\times\mathcal{L}'$.
When we consider lattices of submodules, we see that the property (AB5$^*$) is compatible with products of rings in $\mathfrak R$.

Thus, whether or not being (weakly) JHS is a local property of quasi-coherent sheaves of $\mathcal{O}_{\mathbb{X}}$-modules might be solved by asking if (AB5$^*$) is an ascent-descent property in $\mathfrak{R}$.

%%%%%%%%%%%%%%%%%%%%%%%%%%%%%%%%%%%%%%%%%%%%%%%%%%%%%%%%%%%%%%%%%

\subsection{Igusa and Todorov's representations of $\R$}\label{sec:ITreps}
In this section we show that the (nonzero) representations examined by Igusa and Todorov in \cite{IT15} are weakly JHS, but are not JHS. We recall from Section \ref{sec:fun cats} that for $\field$ an arbitrary field, we denote by $\Fun(\mathbb{R},\veck)$ the category of (covariant) functors from $\mathbb{R}$, considered as a poset category, to the category of finite-dimensional $\field$-vector spaces. Moreover, given $M \in \Fun(\mathbb{R},\veck)$ and $x \leq y \in \mathbb{R}$, we denote by $M(x,y)$ the result of applying $M$ to the unique morphism $x\rightarrow y$ in $\mathbb{R}$.

\begin{definition}\label{def:ARc}
    We denote by $\IT$ the full subcategory of $\Fun(\mathbb{R},\veck)$ consisting of functors $M:\R\to \veck$ be a functor which satisfy the following.
    \begin{enumerate}[label={(IT\arabic*)}]
        \item\label{IT1} For all $x\in\R$, we have $\lim_{y>x}V(y)=V(x)$ and
        \item\label{IT2} For all but finitely many $x \in \mathbb{R}$, there exists $\e > 0$ such that $M(x,x+\e)$ is an isomorphism.
    \end{enumerate}
\end{definition}

In \cite{IT15}, Igusa and Todorov studied functors $\R^{op}\to \veck$ that satisfied the symmetric condition to~\ref{IT1} and the same condition~\ref{IT2} in Definition \ref{def:ARc}.
We may then use their results.

\begin{theorem}[Adapted from \cite{IT15}]\label{thm:igusaTodorv}\
    \begin{enumerate}
    \item The category $\IT$ is an abelian and extension closed subcategory of $\Fun(\R,\veck)$.
    \item For every interval $I$ that is closed below and open above, there is an interval indecomposable $M_I$ in $\IT$ given by
    \begin{align*}
        M_I(x) &= \begin{cases}
            \field & x\in I \\
            0 & x\notin I
        \end{cases}
        &
        M_I(x,y) &= \begin{cases}
            1_\field & x\leq y\in I \\
            0 & \text{otherwise}.
        \end{cases}
    \end{align*}
    \item Every representation in $\IT$ is isomorphic to a finite direct sum of interval indecomposables. This direct sum is unique up to permuting the direct summands.
    \end{enumerate}
\end{theorem}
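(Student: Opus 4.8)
The plan is to deduce all three statements from the corresponding results of \cite{IT15} by matching $\IT$ with the category studied there. Igusa and Todorov work with functors $\R^{op}\to\veck$, and there are two natural bridges between $\Fun(\R,\veck)$ and $\Fun(\R^{op},\veck)$: precomposition with the order-reversing isomorphism $\R\xrightarrow{\sim}\R^{op}$, $t\mapsto -t$ (an exact equivalence), and pointwise $\field$-linear duality $M\mapsto M^{*}$, which together with $\veck\simeq\veck^{op}$ identifies $\Fun(\R,\veck)^{op}$ with $\Fun(\R^{op},\veck)$. I would fix one such identification and check, condition by condition, that it carries \ref{IT1} and \ref{IT2} to the two axioms of \cite{IT15} (up to the bookkeeping by which order-reversal interchanges ``left'' and ``right''), so that $\IT$, respectively $\IT^{op}$, is matched with their category on the nose. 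Since both bridging functors are exact and reflect exactness, part (1) is then immediate: the category of \cite{IT15} is abelian and extension closed, hence so is $\IT$.

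For part (2) I would either transport their interval indecomposables through the same dictionary, checking that the ``closed below, open above'' normalization is exactly the image of the normalization used in \cite{IT15}, or argue directly: for $I$ of this type the pointwise limit in \ref{IT1} is again $\field$ or $0$ at every point (so \ref{IT1} holds), and $M_I$ has only finitely many exceptional points for \ref{IT2}, while $\End(M_I)=\field$ makes it a brick and in particular indecomposable. The point worth isolating is that the half-openness is forced by \ref{IT1}: an interval open on the left fails the limit condition at its left endpoint, and one closed on the right fails it at the right endpoint.

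Part (3) is the substantive statement. Here I would combine \ref{IT1}, \ref{IT2} and pointwise finite-dimensionality with the Crawley--Boevey decomposition of pointwise finite-dimensional $\R$-modules \cite{Crawley-Boevey2015,BC-B}: any $M\in\IT$ is already a (possibly infinite) direct sum $\bigoplus_{j}M_{I_j}$ in $\Fun(\R,\veck)$, and since $\IT$ is a full abelian subcategory with exact inclusion it is closed under direct summands, so each $M_{I_j}$ lies in $\IT$; by part (2) this forces every $I_j$ to be closed below and open above, and \ref{IT2} then forces the index set to be finite. Uniqueness up to permutation is the Krull--Remak--Schmidt property, which holds because each $M_I$ has endomorphism ring $\field$. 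Alternatively, one simply reruns the argument of \cite{IT15} in our orientation: \ref{IT2} produces finitely many critical points $a_1<\cdots<a_n$, a compactness argument on each compact interval $[a_i,a_{i+1}]$ together with \ref{IT2} shows every structure map of $M$ is an isomorphism off the critical points, \ref{IT1} pins down the value at each $a_i$ from the appropriate side, and the resulting finite type-$A$ poset representation decomposes into intervals by Gabriel-type representation theory.

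\noindent\textbf{Main obstacle.} The delicate part is not the type-$A$ representation theory but making the translation rigorous: checking that the chosen bridge between $\Fun(\R,\veck)$ and the setting of \cite{IT15} really matches \ref{IT1}, \ref{IT2} and the interval modules, rather than only up to an unexamined left/right or open/closed swap --- and, in the self-contained route, combining \ref{IT1}, \ref{IT2} and pointwise finite-dimensionality via a covering argument on $[a_i,a_{i+1}]$ so that the reconstructed intervals come out half-open in exactly the claimed way.
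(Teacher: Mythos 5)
Your main route is exactly what the paper does: the paper gives no independent proof of this theorem, noting only that Igusa--Todorov studied functors $\R^{op}\to\veck$ satisfying the condition symmetric to~\ref{IT1} together with the same condition~\ref{IT2}, so that their results may be used; your plan of fixing the order-reversing identification $t\mapsto -t$ (or the duality) and checking that the axioms and the ``closed below, open above'' interval normalization transport correctly is precisely this transfer, carried out in more detail than the paper records. The self-contained alternatives you sketch for parts (2) and (3) are extra material beyond the paper's citation-based treatment, but the core approach coincides.
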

We note that (2) includes intervals of the form $[a,+\infty)$ for every $a\in\R$. By results in \cite{C-B, GR,Continuous A 1}, each object in $\Fun(\R,\veck)$ is also a direct sum of interval indecomposables, but the intervals may be of any form and the sum may be infinite.

We have already shown in Corollary~\ref{cor:pwf} that each object $M$ in $\IT$ is JHS when considered as an object in $\Fun(\mathbb{R},\veck)$. However, as a subcategory of $\Fun(\mathbb{R},\veck)$, $\IT$ is not closed under subobjects. As a result, the colimits and limits of subobject chains will generally differ depending on which category we are working in. The following lemma describes this relationship precisely.
To increase readability in the proof of the lemma, we use $\bigoplus$ for direct sums of objects and $\sum$ for sums of morphisms using the abelian structure of $\IT$.

\begin{lemma}\label{lem:ARcIsBicomplete}
    Let $X$ be an object in $\IT$ and $\emptyset \neq \Delta$ a subobject chain of $X$ in $\IT$.
    \begin{enumerate}
        \item Let $\colim_{\Fun}\Delta :=\bigoplus_\lambda M_{I_\lambda}$ be the colimit of $\Delta$ in $\Fun(\R,\veck)$.
        Then the colimit of $\Delta$ in $\IT$ is given by $\colim_{\mathsf{IT}}\Delta:= \bigoplus_\lambda M_{I_\lambda\cup\inf(I_\lambda)}.
        $
        \item The limit $\lim_\Fun\Delta$ of $\Delta$ in $\Fun(\R,\veck)$ is also the limit of $\Delta$ in $\IT$.
    \end{enumerate}
\end{lemma}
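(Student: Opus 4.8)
The plan is to analyze separately the colimit and the limit, using Theorem~\ref{thm:igusaTodorv} to reduce everything to interval indecomposables and the known behavior of (co)limits of totally ordered sets of intervals in $\Fun(\R,\veck)$ recorded in Equation~\eqref{eqn:intLims}. For part~(1), I would start from the observation that any pre-subobject chain $\Delta$ of $X$ in $\IT$ is in particular a pre-subobject chain of $X$ in $\Fun(\R,\veck)$, so $\colim_{\Fun}\Delta$ exists and is a subobject of $X$; write it as $\bigoplus_\lambda M_{I_\lambda}$. The first step is to check that $N:=\bigoplus_\lambda M_{I_\lambda\cup\{\inf I_\lambda\}}$ genuinely lies in $\IT$: each interval $I_\lambda$ appearing in a subobject of $X\in\IT$ is closed below and open above by Theorem~\ref{thm:igusaTodorv}(2), hence so is $I_\lambda\cup\{\inf I_\lambda\}$ (when the infimum exists and is not already in $I_\lambda$, this only affects the left endpoint; when there is no infimum, nothing changes), and property~\ref{IT2} is a finiteness condition which is unaffected by altering finitely many left endpoints of the finitely many summands. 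The second step is to exhibit a monomorphism $\colim_{\Fun}\Delta \hookrightarrow N$ in $\IT$ (pointwise the identity where defined) and to verify the universal property: given any solution $\{f_Y\colon Y\to T\}_{Y\in\Delta}$ of $\mathfrak D$ in $\IT$, the induced map $\colim_{\Fun}\Delta\to T$ factors uniquely through $N$. Here the key point is that an object $T$ of $\IT$ satisfies~\ref{IT1}, i.e. $\lim_{y>x}T(y)=T(x)$; this is exactly what forces the map out of $\colim_\Fun\Delta$ (which may be "missing" the left endpoints) to extend canonically over those endpoints, because near an endpoint $a=\inf I_\lambda$ the compatible system of maps from the $Y$'s determines a map out of $\lim_{y>a}(\,\cdot\,)(y)$, and~\ref{IT1} identifies this limit with the value at $a$. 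This factorization step is where I expect the real content to lie.

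For part~(2), the strategy is shorter. We have $\lim_\Fun\Delta = \bigcap_{Y\in\Delta}Y$ computed in $\Fun(\R,\veck)$, which by Equation~\eqref{eqn:intLims} is again a direct sum of interval indecomposables indexed by intersections of the intervals occurring; since $X\in\IT$ this is a subobject of $X$ and hence already lies in $\IT$ (intersections of intervals that are closed below and open above are again of that form, and the finiteness condition~\ref{IT2} is inherited from $X$). Thus $\lim_\Fun\Delta$ is an object of $\IT$ that is a solution to $\mathfrak D_{\mathsf s}$ in $\IT$, and it is terminal among such solutions because it is already terminal among solutions in the larger category $\Fun(\R,\veck)\supseteq\IT$. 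So $\lim_{\mathsf{IT}}\Delta = \lim_\Fun\Delta$, with no correction needed — the asymmetry with part~(1) reflecting Remark~\ref{rem:notDual1}, namely that $\IT$ is closed under the relevant limits but not under the relevant colimits.

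The main obstacle, as noted, is the universal-property verification in part~(1): one must argue carefully that adding the infima to the intervals produces precisely the colimit in $\IT$ and not merely an object of $\IT$ receiving a map from $\colim_\Fun\Delta$. I would handle this by reducing to a single indecomposable summand $M_{I_\lambda}$ with $a=\inf I_\lambda\notin I_\lambda$, noting that $M_{I_\lambda}$ is itself the colimit in $\Fun(\R,\veck)$ of the chain $\{M_{[b,\,\cdot\,)}\}_{b\in I_\lambda}$ of its finite-support-truncations-from-below, and then checking directly that for a target $T$ in $\IT$ a compatible family of maps $M_{[b,\,\cdot\,)}\to T$ is the same data as a map $M_{I_\lambda\cup\{a\}}\to T$, the passage from "all $b>a$" to "$b=a$" being governed exactly by~\ref{IT1}. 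Uniqueness of the factorization is automatic since $\IT$ is a full subcategory of $\Fun(\R,\veck)$ and the maps agree on a dense (in the relevant sense) collection of points. Assembling the summands and invoking that finite direct sums are both products and coproducts then finishes part~(1).
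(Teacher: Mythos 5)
Your outline has the same overall shape as the paper's proof (adjoin the infima to obtain the colimit, no correction needed for the limit, and a factorization argument at the left endpoints driven by~\ref{IT1}), and the universal-property step you flag as the main obstacle is handled essentially as in the paper, which notes that every interval occurring in a target in $\IT$ is closed on the left. There is, however, a genuine gap in how you verify that the proposed (co)limits lie in $\IT$ at all. You repeatedly use the inference ``it is a subobject of $X\in\IT$, hence its intervals are closed below and open above by Theorem~\ref{thm:igusaTodorv}(2)'', but $\IT$ is not closed under subobjects taken in $\Fun(\R,\veck)$ --- for instance $M_{(a,b)}\subob M_{[a,b)}$ --- and indeed the intervals $I_\lambda$ in part~(1) are typically \emph{not} closed below; that is exactly why the correction $I_\lambda\cup\{\inf I_\lambda\}$ is needed, as your own parenthetical concedes. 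What genuinely has to be proved is that each $I_\lambda$ is open on the right, so that $M_{I_\lambda\cup\{\inf I_\lambda\}}$ satisfies~\ref{IT1}; the paper obtains this from the inclusion $\colim_\Fun\Delta\hookrightarrow X$ together with the classification of monomorphisms between interval modules recalled in Section~\ref{sec:fun cats} (a monomorphism forces equal right endpoints with the same open/closed behaviour, and the intervals of $X$ are open above). Your proposal never supplies this step.

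Second, both parts tacitly assume that $\colim_\Fun\Delta$ and $\lim_\Fun\Delta$ decompose as \emph{finite} direct sums of interval indecomposables (``the finitely many summands'', ``\ref{IT2} is inherited from $X$''); but~\ref{IT2} is precisely where finiteness enters, and a pointwise finite-dimensional subfunctor could a priori have infinitely many interval summands. The paper secures finiteness by citing that objects of $\IT$ are finitely generated, see \cite[Theorem~3.0.1]{Continuous A 1}. Relatedly, in part~(2) your appeal to Equation~\ref{eqn:intLims} does not apply as stated: the members of $\Delta$ are finite direct sums of intervals, not single interval modules, so ``intersections of the intervals occurring'' is not well defined without first choosing compatible decompositions. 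The paper instead verifies~\ref{IT1} for $\lim_\Fun\Delta$ directly, by commuting the limit over $\Delta$ with $\lim_{y>x}$ and using that every member of $\Delta$ satisfies~\ref{IT1}; some such argument is needed in your write-up as well. Once right-openness of the $I_\lambda$ and finiteness of the decompositions are supplied, the remainder of your outline goes through.
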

\begin{proof}
    By Proposition~\ref{prop:FunctorCategoryIsJHS}, we know $\colim_\Fun\Delta$ and $\lim_\Fun\Delta$ exist and are subfunctors of $M$. By \cite[Theorem~3.0.1]{Continuous A 1}, we know that $M$, and thus $\colim_\Fun\Delta$ and $\lim_\Fun\Delta$, are finitely generated in $\Fun(\mathbb{R},\veck)$. In particular, this means $\colim_\Fun\Delta$ and $\lim_\Fun\Delta$ are finite direct sums of interval indecomposables. It follows that both $\lim_\Fun\Delta$ and $\colim_\Fun\Delta$ satisfy~\ref{IT2}.
    
    To see that $\lim_\Fun\Delta$ satisfies~\ref{IT1}, we note that for all $x \in \mathbb{R}$, we have that (in $\veck$) $$\lim_{y > x}\left(\mathrm{lim}_\Fun \Delta(y)\right) = \lim_{N \in \Delta}\left(\lim_{y > x} N(y)\right).$$
    Since each $N \in \Delta$ is in $\IT$, this implies that $\lim_{y > x}\left(\lim_\Fun \Delta(y)\right) = \lim_\Fun \Delta(x)$, as desired.
    
    Now write $\colim_\Fun \Delta = \bigoplus_{i = 1}^m M_{I_i}$ and $M = \bigoplus_{k = 1}^p M_{K_k}$, where each $M_{I_i}$ and $M_{K_k}$ is an interval indecomposable. Let $\iota:\colim_\Fun \Delta \rightarrow M$ be the inclusion map and write $\iota = \sum_{i = 1}^m \sum_{k = 1}^p \iota_{i,k}$, with each $\iota_{i,k}:M_{I_i}\rightarrow M_{K_k}$. We claim that each $I_i$ is open on the right. Indeed, if $I_i$ is closed on the right, then there exists some $K_k$ so that $(\iota_{i,k})_{\sup I_i}: M_{I_i}(\sup I_i)\rightarrow M_{K_k}(\sup I_i)$ is nonzero. But this can only happen if $\supp I_i = \supp K_k \in K_k$, a contradiction. In particular, this implies that $\colim_\IT\Delta$ is indeed an object in $\IT$.
    
    It remains to show that $\colim_\IT\Delta$ satisfies the universal property of the colimit. To see this, let $N = \oplus_{j = 1}^n M_{J_j}$ be an object in $\IT$ and let $g = \sum_{i = 1}^m \sum_{j = 1}^n g_{i,j}: M_{I_i}\rightarrow M_{J_j}$ be a morphism in $\Fun(\mathbb{R},\veck)$. Since each $J_j$ is closed on the left by assumption, it follows that each $g_{i,j}$ factors through $M_{I_i\cup\inf I_i}$. Thus, $g$ factors through $\colim_{\IT}\Delta$. 
    This proves the result.
\end{proof}

\begin{proposition}\label{prop:ARcIsWJHSNotJHS}
    Every nonzero object $X$ in $\IT$ is weakly JHS but not JHS.
\end{proposition}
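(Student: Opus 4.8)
The plan is to prove the two halves separately: first that $X$ is weakly JHS, then that $X$ is not JHS. Throughout I will use the fact recorded just before the statement that every object of $\IT$ is JHS — hence in particular satisfies \ref{JHS1}--\ref{JHS4} — when regarded as an object of the larger category $\Fun(\R,\veck)$ (Propositions~\ref{prop:length} and~\ref{prop:FunctorCategoryIsJHS}). The translation between the two categories is supplied by Lemma~\ref{lem:ARcIsBicomplete}: for a pre-subobject chain $\Delta$ of $X$ in $\IT$ one has $\lim_{\mathsf{IT}}\Delta=\lim_{\Fun}\Delta$, whereas $\colim_{\mathsf{IT}}\Delta=\overline{\colim_{\Fun}\Delta}$, where $\overline{(-)}$ denotes the operation of replacing each interval summand of a subfunctor of $X$ by the same interval closed on the left (this makes sense because every such summand embeds into a summand of $X$ and is therefore bounded below). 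Moreover, since $\IT$ is an abelian subcategory of $\Fun(\R,\veck)$ (Theorem~\ref{thm:igusaTodorv}(1)), the inclusion functor is exact, so kernels, cokernels and images — and hence the lattice operations $\cap$ and $+$ on subobjects — are computed the same way in $\IT$ as in $\Fun(\R,\veck)$.

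Granting this dictionary, axiom \ref{JHS4} for $X$ in $\IT$ is immediate from \ref{JHS4} for $X$ in $\Fun(\R,\veck)$: for a pre-subobject chain $\Delta$ and a subobject $Z\subob X$ the chain $\Delta+Z$ is the same whether formed in $\IT$ or in $\Fun(\R,\veck)$, limits of chains agree, and hence $\lim_{\mathsf{IT}}(\Delta+Z)=\lim_{\Fun}(\Delta+Z)=(\lim_{\Fun}\Delta)+Z=(\lim_{\mathsf{IT}}\Delta)+Z$; likewise \ref{JHS2} holds since $\lim_{\Fun}\Delta$ exists, is a subfunctor of $X$, and lies in $\IT$. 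For \ref{JHS1} I would first check that $\overline{(-)}$ carries subfunctors of $X$ to subobjects of $X$ in $\IT$ — this uses only that every interval summand of $X$ is closed on the left, so that closing a subinterval on the left keeps it inside the ambient summand — whence $\colim_{\mathsf{IT}}\Delta=\overline{\colim_{\Fun}\Delta}$ really is a subobject. The remaining content of \ref{JHS3}, namely $\colim_{\mathsf{IT}}(\Delta\cap Z)=(\colim_{\mathsf{IT}}\Delta)\cap Z$, then reduces to the identity $\overline{N\cap Z}=\overline{N}\cap Z$ for an arbitrary subfunctor $N$ of $X$ and an arbitrary subobject $Z\subob X$ lying in $\IT$, after which \ref{JHS3} for $X$ in $\Fun(\R,\veck)$ gives $\colim_{\mathsf{IT}}(\Delta\cap Z)=\overline{\colim_{\Fun}(\Delta\cap Z)}=\overline{(\colim_{\Fun}\Delta)\cap Z}=\overline{\colim_{\Fun}\Delta}\cap Z=(\colim_{\mathsf{IT}}\Delta)\cap Z$.

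I expect the identity $\overline{N\cap Z}=\overline{N}\cap Z$ to be the main obstacle, and I would establish it by a pointwise comparison. Writing $N$ as a finite direct sum of interval indecomposables (possible because $X$, being finitely generated in $\Fun(\R,\veck)$, has all of its subfunctors finite direct sums of interval modules), the two sides agree at every real number outside the finitely many left endpoints of the open-below summands of $N$: at such a point $\overline{N}$ coincides with $N$, and the left endpoint of any open-below summand of $N\cap Z$ is one of them, since $Z\in\IT$ contributes only closed-below summands. At such a left endpoint $\alpha$, the only possible discrepancy is the ``new'' basis vector $w$ that $\overline{N}$ acquires there, and $w$ lies in $Z(\alpha)$ precisely when $X(\alpha,y)(w)\in Z(y)$ for $y$ slightly above $\alpha$ — this is exactly the left-continuity axiom \ref{IT1} applied to $X$ and to $Z$ — which is in turn exactly the condition for $N\cap Z$ to acquire a matching open-below summand at $\alpha$, and hence for $\overline{N\cap Z}$ to contain $w$. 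So equality holds at $\alpha$ as well, and \ref{JHS3} follows.

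Finally, $X$ is not JHS. By the above $X$ is subobject bicomplete, so Theorem~\ref{prop:compExists} furnishes a composition series $\Delta$ of $X$. But $\IT$ has no simple objects: every interval indecomposable $M_{[a,b)}$ (with $a<b$, possibly $b=+\infty$) admits $M_{[a',b)}$ as a proper nonzero subobject for any $a<a'<b$, and a nonzero direct sum is never simple. Consequently the requirement that every subfactor of $\Delta$ be simple forces $\fac(\Delta)=\emptyset$. Since $X\neq 0$, this contradicts \ref{JHS5}, and therefore $X$ is not JHS.
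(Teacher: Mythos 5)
Your proposal is correct and takes essentially the same route as the paper: weak JHS-ness is transferred from $\Fun(\R,\veck)$ via Proposition~\ref{prop:FunctorCategoryIsJHS} and Lemma~\ref{lem:ARcIsBicomplete} (with \ref{JHS1}, \ref{JHS2}, \ref{JHS4} immediate and \ref{JHS3} reduced to compatibility of the left-closure with intersection by a left-continuity/\ref{IT1} argument), and the failure of \ref{JHS5} follows from the nonexistence of simple objects in $\IT$ exactly as in the paper. The only difference is cosmetic: the paper verifies the \ref{JHS3} identity in one stroke by writing $(\colim_{\mathsf{IT}}\Delta)(x)=\lim_{y>x}(\colim_{\Fun}\Delta)(y)$ and commuting this limit with the intersection, whereas you check the same identity $\overline{N\cap Z}=\overline{N}\cap Z$ by an explicit interval-endpoint/basis analysis resting on the same facts.
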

\begin{proof}
    We first show every object is weakly JHS and then show that $\IT$ has no simple objects. This will imply that $\fac(M) = \emptyset$ for all objects $M$ of $\IT$, and so no nonzero object in $\IT$ is~JHS.
    
    Let $M$ be an object of $\IT$. We note that  $\IT$ has exact $M$-bounded direct limits and that $M$ is (AB5$^*$) in $\IT$ as an immediate consequence of Proposition~\ref{prop:FunctorCategoryIsJHS} and Lemma~\ref{lem:ARcIsBicomplete}.
    Thus, $M$ satisfies~(AB5) in $\IT$ by Theorem~\ref{thm:catComp_AB5}.
    
    Now we show that $\IT$ contains no simple objects.
    For contradiction, suppose $S$ is simple in $\IT$.
    Then $S\cong M_I$ for some interval $I$ that is closed below and open above.
    However, this means that $\inf I\neq \sup I$, and so there exists $\e>0$ such that $\inf I < \e+\inf I < \sup I$.
    Then $M_{[\e+\inf I,\sup I)}$ is a subobject of $S$, a contradiction.
\end{proof}

\begin{example}\label{ex:CompSeriesInARc}
    Let $I=[a,b)$ for a pair of real numbers $a<b$.
    Then
    \begin{displaymath}
        \Delta = \{M_{[x,b)} \mid a\leq x < b\}\cup\{0\}
    \end{displaymath}
    is a composition series of $M_I$ in $\IT$ which satisfies $\fac\Delta = \emptyset$.
\end{example}

%%%%%%%%%%%%%%%%%%%%%%%%%%%%%%%%%%%%%%%%%%%%%%%%%%%%%%%%%%%%%%%%%

\subsection{Non-example: Infinite products and coproducts}\label{sec:grothendieck}
We conclude by showing that infinite products and coproducts are often not JHS, and may even admit two composition series with different cardinalities of subfactor multisets. While the results of this section hold more generally, we work only in module categories to simplify the arguments.

\begin{proposition}
    Let $R$ be a ring, $\Cat$ the category of left $R$-modules, and $S$ a simple object in $\Cat$. Then
    \begin{enumerate}
        \item $\coprod_\mathbb{N} S$ is not (AB5$^*$).
        \item $\prod_{\mathbb{N}}S$ is not (AB5$^*$).
        \item $\prod_{\mathbb{N}}S$ admits composition series $\Delta$ and $\Gamma$ with $$|\fac(\Delta)| = |\mathbb{N}| \neq \left|\End_\Cat(S))\right|^{|\mathbb{N}|} = |\fac(\Gamma)|.$$
    \end{enumerate}
\end{proposition}

\begin{proof}
    Denote $C:= \coprod_{\mathbb{N}}S$ and $P:= \prod_{\mathbb{N}}S$. For $n \in \mathbb{N}$, we denote by $\iota_n: S \rightarrow C$ and $q_n: P\rightarrow S$ the $n$-th inclusion and projection maps. Moreover, we denote $C_n:= \coprod_{i = 0}^n S$ and $C_n^c := \coprod_{\mathbb{N}\setminus\{[0,n]\}}$, and likewise for $P_n$ and $P_n^c$. We consider each $C_n$ and $C_n^c$ as subobjects of $C$ and each $P_n$ and $P_n^c$ as quotient objects of $P$ via the inclusion or projection to the relevant coordinates. Finally, we denote by $\natural: C \rightarrow P$ the natural (inclusion) map.

    (1) Let $\sigma: C \rightarrow S$ be the summation map; i.e., the unique map for which $\sigma \circ \iota_n = 1_S$ for all $n$. We claim that $\Delta_C:= \{C_n^c\mid n \in \mathbb{N}\}$ and $\ker \sigma$ fail to satisfy the (AB5$^*$) condition. Indeed, first note that $\bigwedge \Delta_C = 0$. Now let $m, n \in \mathbb{N}$. If $m > n$, then $\mathrm{im}(\iota_m) \subob C^c_n$, and if $m \leq n$, then $\iota_m = (\iota_m - \iota_{n+m}) + \iota_{n+m}$ and $\mathrm{im}(\iota_m - \iota_{n+m}) \subob \ker \sigma$. It follows that $C = \bigvee_m \{\mathrm{im}(\iota_m)\} = C^c_n + \ker \sigma$. Thus
    $$\bigwedge\left(\Delta_C + \ker \sigma\right) = C \neq \ker\sigma = \left(\bigwedge \Delta_C\right) + \ker \sigma.$$
    We conclude that $C$ is not (AB5$^*$).

    (2) Let $\Delta_P = \{P_n^c\mid n \in \mathbb{N}\}$, where each $P_n^c$ is considered as a subobject of $P$ via inclusion of the relevant coordinates. Then in particular $P_n^c + \mathrm{im}(\natural) = P$ for all $n \in \mathbb{N}$. Moreover, we have $\bigwedge \Delta_P = 0$. We conclude that
    $$\bigwedge\left(\Delta_P + \mathrm{im}(\natural)\right) = P \neq \mathrm{im}(\natural) = \left(\bigwedge \Delta_P\right) + \mathrm{im}(\natural),$$
    and so $P$ is not (AB5$^*$).

    (3) We first claim that $\Delta := \Delta_P \cup \{0\}$ is a composition series of $P$. Indeed, it is clear that $\Delta \cup \{0\}$ is bicomplete, since any $\Delta' \subseteq \Delta$ contains a maximal element and either contains a minimal element or satisfies $\bigwedge\Delta' = \bigwedge\Delta_0 = 0$. Moreover, we have $P^-_{\Delta} = P^c_0$ and $P/P^c_0 \cong S$. Likewise, for $n \in \mathbb{N}$, we have $(P^c_n)^-_{\Delta} = P^c_{n+1}$ and $P^c_n/P^c_{n+1} \cong S$. We conclude that all of the subfactors of $\Delta$ are simple, and therefore $\Delta$ is a composition series which satisfies $|\fac(\Delta)| = |\mathbb{N}|$.

    We now construct a second composition series of $P$ with larger cardinality. To begin, we note that since $S$ is simple, the ring $\mathrm{End}_\Cat(S)$ a division ring. Thus we have that
        $$\Hom_\Cat\left(S,P\right)\cong \prod_\mathbb{N}\mathrm{End}_\Cat(S)^{op}$$
    is a free $\mathrm{End}_\Cat(S)^{op}$-module. It is well-known that the $(\End_\Cat(S)^{op})$-dimension of this module is $|\mathrm{End}_\Cat(S)|^{|\mathbb{N}|}$, which in particular is uncountable. See e.g.~\cite[Theorem XI.5.2]{jacobson}. Thus choose some basis $B$ of $\Hom_\mathcal{S}(S,P)$ and choose a well-order $\leq$ on $B$ for which $B$ contains a minimal element $b_0$ and maximal element $b_1$.

    Now for any subset $B'\subseteq B$, denote
    $$X_{B'} := \sum_{b \in B'} \mathrm{im}(b).$$
    We note that if $B' \subsetneq B'' \subseteq B$, then there is a proper inclusion $X_{B'}\hookrightarrow X_{B''}$. Moreover, we have $X_B = P$. Since $B$ is well-ordered, we then have a subobject chain
    $$\Gamma = \left\{0,X_{\{b_0\}},X_B\right\} \cup \left\{ X_{[b_0,b)}\mid b \in B\setminus\{b_0\}\right\}$$
    of $P$. Now let $\Gamma'\subseteq \Gamma$. If $0 \in \Gamma'$, then clearly $\bigwedge\Gamma' = 0 \in \Gamma$. Otherwise, define
        $$J_0 := \bigcap_{I \subseteq B: X_I \in \Gamma'} I\qquad\quad\text{ and } \quad\qquad J_1 := \bigcup_{I \subseteq B: X_I \in \Delta'} I.$$
        It follows that $\bigwedge\Gamma' = X_{J_0} \in \Gamma$ and $\bigvee\Gamma' = X_{J_1} \in \Gamma$. We conclude that $\Delta$ is bicomplete. Moreover, we have $0^+_\Gamma = X_{\{b_0\}}$ and $X_{\{b_0\}}/0 \cong S$. Likewise, for $b \in B\setminus\{b_0\}$, we have
    $\left(X_{[b_0,b)}\right)^+_\Gamma = X_{[b_0,b]}$ and $X_{[b_0,b]}/X_{[b_0,b)} \cong S$. We conclude that $\Gamma$ is a composition series and $|\fac(\Gamma)| = |B| \gneq |\mathbb{N}|$.
\end{proof}

\section{On Direct Sum Decompositions}\label{sec:directSum}

While we have focused primarily on the existence and uniqueness of composition series, another result which is fundamental to the study of abelian length categories is the \emph{Krull--Remak--Schmidt Theorem}. This result states that every object of finite (nonzero) length can be expressed as a direct sum of (finitely many) indecomposable subobjects, each with local endomorphism ring. Furthermore, this decomposition is unique up to isomorphism and permutation of the direct summands. See e.g. \cite[Theorem~X.7.5]{Lang} for a proof in module categories.

As with the Jordan--H\"older and Schreier Theorems, there have been a plethora of generalizations of the Krull--Remak--Schmidt Theorem in the past century. One of the most famous examples is the Krull--Remak--Schmidt--Azumaya Theorem \cite{KRSA}, which states that if $X = \bigoplus_{\alpha \in I} Y_\alpha$ is a decomposition of a module $X$ into a direct sum of indecomposable modules with local endomorphism rings and $Z \subob X$ is an indecomposable direct summand of $X$, then there exist $W \subob X$ and $\alpha \in I$ such that $X = W\oplus Z = W\oplus Y_\alpha$. This in particular implies that there is a bijection $\Phi:I \rightarrow J$ such that $Y_\alpha \cong Z_{\Phi(\alpha)}$ for all $\alpha \in I$. An even stronger notion of uniqueness is obtained by asking the decomposition $X = \bigoplus_{\alpha \in I} Y_\alpha$ to ``complement direct summands'', see Definition~\ref{def:complements}.

It has long been known that many results on direct sum decompositions can be formulated in the language of lattice theory. In particular, much attention has been devoted to establishing decomposition theories in meet- and/or join-continuous lattices (both modular and not). See  \cite{DC,sch,sem,simmons,wal} and the references therein for some examples of historical and modern approaches\footnote{Note that some of these references focus on \emph{irredundant decompositions} using \emph{completely join-irreducible elements}. These generally differ from the \emph{independent decompositions} using \emph{indecomposable elements} considered in the present paper. For example, every completely join-irreducible element is indecomposable, but the converse may not hold.}. This includes \emph{algebraic} lattices (see Definition~\ref{def:local_fin_gen}), which inherit many nice properties from module theory.

The recent paper \cite{IY}, Ibrahim and Yousif show that every (AB5$^*$) module $X$ (over any ring) can be expressed as a direct sum $X = \bigoplus_{\alpha \in I} Y_\alpha$ of indecomposable submodules. If moreover $X$ satisfies the ``finite exchange property'' (see Definition~\ref{def:exchange} below), then each $Y_\alpha$ has local endomorphism ring and the decomposition complements direct summands. In light of the fact that the classes of weakly JHS modules and (AB5$^*$)-modules  coincide (see Remark~\ref{rem:moduleJHS3}(2)), some of the proofs from \cite{IY} are readily adapted to the large classes of weakly JHS objects and lattices. In this section, we outline a plan for future work in this direction and prove some preliminary results. 

As in many of the earlier sections, we maintain the convention that $\mathcal{L}$ denotes a bounded modular lattice, which we assume to be complete, and that $\Cat$ denotes a well-powered abelian category.

%%%%%%%%%%%%%%%%%%%%%%%%%%%

\subsection{Independent sets and decompositions}

In this section, we recall the lattice-theoretic notions which generalize direct sum decompositions in abelian categories. The following definitions are motivated by the notion of (Von Neumann) independence, see e.g. \cite[Section~V.1.6]{gratzer_book} or \cite[Section~V.1.5]{birkhoff_book}.

\begin{definition}\label{def:independent}
    Let $\Delta \subseteq \mathcal{L}$ and suppose that $\hat{0} \notin \Delta$.
    \begin{enumerate}
        \item We say that $\Delta$ is \textdef{weakly independent} if for all $y \in \Delta$ and for all finite subsets $\Delta' \subseteq \Delta \setminus \{y\}$, one has $y \wedge \left(\bigvee (\Delta'\setminus\{y\})\right) = \hat{0}$.
        \item We say that $\Delta$ is \textdef{independent} if for all $\Delta' \subseteq \Delta$, one has
        $\left(\bigvee \Delta'\right) \wedge \left(\bigvee (\Delta\setminus\Delta')\right) = \hat{0}$. In this case, we write $\biguplus \Delta := \bigvee \Delta$; i.e., if we write $\biguplus\Delta$, we are assuming that $\Delta$ is independent.
        \item We say that $\Delta$ is a \textdef{independent decomposition} of $\mathcal{L}$ (respectively of $\bigvee \Delta$) if $\Delta$ is independent and $\biguplus \Delta = \hat{1}$ (respectively $\biguplus \Delta \leq \hat{1}$). In this case, we say each $y \in \Delta$ is a \textdef{direct summand} of $\mathcal{L}$ (respectively of $\bigvee \Delta = \biguplus \Delta)$.
        \item For $y \in \Delta$, we say that $y$ is \textdef{indecomposable} if given $z, w \in \mathcal{L}$ with $z \vee w = y$ and $z \wedge w = \hat{0}$, one must have that $z = \hat{0}$ and $w = y$ or vice versa.
        \item We say that $\Delta$ is an independent decomposition of $\mathcal{L}$ (respectively of $\bigvee \Delta$) \textdef{into indecomposables} if $\Delta$ is an independent decomposition of $\mathcal{L}$ (respectively of $\bigvee \Delta$) and every $y \in \Delta$ is indecomposable.
    \end{enumerate}
\end{definition}

\begin{remark}\label{rem:directSumDefs}
    Let $X$ be a subobject complete object in $\Cat$, and let $\emptyset \neq \Delta \subseteq \mathcal{L}$. Then $\Delta$ is independent if and only if $\bigvee\Delta = \bigoplus_{Y \in \Delta} Y$, where here $\bigoplus$ denotes the internal direct sum. Moreover, a subobject $Y \in \Delta$ is indecomposable (respectively a direct summand of $\Sub(X)$) in the sense of Definition~\ref{def:independent} if and only if it is indecomposable (respectively a direct summand of $X$) as an object in $\Cat$.
\end{remark}

\begin{notation}
    In light of Remark~\ref{rem:directSumDefs}, we will use $\biguplus \Delta$ and $\bigoplus_{Y \in \Delta} Y$ interchangeably when working with objects in abelian categories.
\end{notation}

We note that $\mathcal{L}$ always admits an independent decomposition $\hat{1} = \biguplus\left\{\hat{1}\right\}$. On the other hand, $\mathcal{L}$ may not admit an independent decomposition into indecomposables. For example, let $X = \prod_{i = 1}^\infty \mathbb{Z}$, considered as a $\mathbb{Z}$-module. It is well-known that $\mathbb{Z}$ is not a free module, and thus can not be expressed as a direct sum of indecomposable submodules. It follows that the lattice $\mathcal{L} = \Sub(X)$ does not admit an independent decomposition into indecomposables.

The following allows us to simplify the definition of independence when dealing with (AB5) lattices. In particular, the analogous statement is well-known in Grothendieck categories.

\begin{lemma}\label{lem:indep}
    Suppose $\mathcal{L}$ is (AB5), and let $\Delta \subseteq \mathcal{L}$ be a nonempty subset. Then $\Delta$ is independent if and only if it is weakly independent.
\end{lemma}

\begin{proof}
    Suppose $\Delta$ is weakly independent, and let $\Delta' \subseteq \Delta$. Denote by $\mathfrak{F}$ and $\mathfrak{G}$ the sets of finite subsets of $\Delta'$ and $\Delta \setminus \Delta'$, respectively. Then $\Omega_1 := \{\bigvee \Gamma \mid \Gamma \in \mathfrak{F}\}$ and $\Omega_2:= \{\bigvee \Gamma' \mid \Gamma' \in \mathfrak{G}\}$ are (upward) directed subsets of $\mathcal{L}$ which satisfy $\bigvee \Omega_1 = \bigvee \Delta'$ and $\bigvee \Omega_2 = \bigvee(\Delta\setminus\Delta')$. Since $\mathcal{L}$ is (AB5) and $\Delta$ is weakly independent, it the follows that
    $$
        \left(\bigvee \Delta'\right) \wedge \left(\bigvee (\Delta \setminus \Delta')\right) = \left(\bigvee \Omega_1\right) \wedge \left(\bigvee \Omega_2 \right)
            = \bigvee \left(\Omega_1 \wedge \Omega_2\right)
           = \hat{0}.
    $$
    We conclude that $\Delta$ is independent. The reverse implication is trivial.
\end{proof}

%%%%%%%%%%%%%%%%%%%%%%%%%%%%%%%%%%

\subsection{Existence of direct sum decompositions}

In this section, we discuss the existence of direct sum decompositions, and more generally independent decompositions into indecomposables, for weakly JHS objects and lattices. We first consider the following non-example.

\begin{example}\label{ex:boolean}
    Let $\mathcal{L}$ be a non-trivial atomless complete Boolean algebra. (A typically example is the algebra generated by all half-open intervals $[a,b)$ with $a \in \mathbb{Q}$ and $b \in \mathbb{Q} \cup \{\infty\}$. See e.g. \cite[Chapter~16]{atomless}.) Since Boolean algebras are completely semidistributive, they are modular and weakly JHS. Moreover, let $\hat{0} \neq x \in \mathcal{L}$. Since $\mathcal{L}$ is atomless, we can choose $\hat{0} < y < x$. Denoting by $y^c$ the complement of $y$, this yields an independent decomposition $\{y, x \wedge y^c\}$ of $x$. Thus $\mathcal{L}$ has no indecomposable elements. 
\end{example}

As shown in the example, the existence of complements in a nontrivial atomless complete Boolean algebra $\mathcal{L}$ implies that every independent decomposition of $\mathcal{L}$ can be ``further decomposed'', but that $\mathcal{L}$ does not admit an independent decomposition into indecomposables. We will call lattices with this property \textdef{superdecomposable}. It will be a consequence of Lemma~\ref{lem:countable} that every nontrivial weakly JHS lattice which does not contain an indecomposable direct summand is superdecomposable.

\begin{remark}
    Example~\ref{ex:boolean} can also be modified into an example of a superdecomposable weakly JHS object in an abelian (even Grothendieck) category. Indeed, in \cite[Theorem~3]{roos}, it is shown that for every boolean algebra $\L$ one can associate a ``locally distributive spectral category'' $\Cat_\L$. This is a Grothendieck category in which every short exact sequence is split. Moreover, if $\L$ is atomless (or more generally non-atomic), then $\Cat_\L$ contains a nontrivial superdecomposable object $X$ such that $\Sub(X)$ is completely distributive (and thus weakly JHS), see \cite[Remark~2]{roos} and \cite[Proposition~6.7]{stenstrom}.
\end{remark}

Example~\ref{ex:boolean} stands in contrast to this situation in module categories. Indeed, it is shown in \cite{IY} that every (AB5$^*$) module can be expressed as a direct sum of indecomposable submodules. The proof in that paper relies on the fact that every (AB5$^*$)-module contains an indecomposable direct summand, a result we have already shown does not extend to generally (AB5$^*$) lattices in Example~\ref{ex:boolean}. Towards that end, we consider the following definitions.

\begin{definition}\label{def:local_fin_gen}\
Let $\mathcal{L}$ be a complete lattice.
   \begin{enumerate}
        \item An element $x \in \mathcal{L}$ is called \textdef{compact} if, given a directed subset $\Delta \subseteq \mathcal{L}$ for which $x \leq \bigvee \Delta$, there exists $y \in \Delta$ such that $x \leq y$. We denote by $\mathcal{L}^c$ the set of compact element of $\mathcal{L}$.
        \item We say that the lattice $\mathcal{L}$ is \textdef{weakly algebraic} if $\hat{1} = \bigvee \L^c$.
        \item The lattice $\mathcal{L}$ is said to be \textdef{algebraic} (also known as \emph{compactly generated}) if for every $x \in \mathcal{L}$ one has $x = \bigvee \{y \in \mathcal{L}^c \mid y \leq x\}$.
   \end{enumerate}
\end{definition}

\begin{definition}\label{def:local_fin_gen_2}\
Let $X$ be a subobject complete object in $\Cat$.
   \begin{enumerate}
        \item We say a subobject $Y \subob X$ is \textdef{finitely generated} if $Y$ is compact as an element of the lattice $\Sub(X)$.
        \item We say that $X$ is \textdef{locally finitely generated} if $\Sub(X)$ is weakly algebraic.
        \item We say that $X$ is \textdef{locally finitely presented} if $\Sub(X)$ is algebraic.
   \end{enumerate}
\end{definition}

We note that the lattice-theoretic definitions of the notions in Definition~\ref{def:local_fin_gen_2} intentionally avoid some technicalities. Indeed, the more general definition of a finitely generated object $X$ in a cocomplete abelian category $\mathcal{A}$ asks that the natural map $\varinjlim_\alpha \Hom(X,Y_\alpha) \rightarrow \Hom\left(X,\varinjlim Y_\alpha\right)$ be a monomorphism for every directed system $(Y_\alpha)_{\alpha \in I}$. This coincides with the definition given above in the case of a Grothendieck category. Likewise, a cocomplete abelian category $\Cat$ is said to be locally finitely generated (resp. presented) if the full subcategory of finitely generated (resp. presented) objects is skeletally small and every object can be expressed as a direct limit of finitely generated (resp. presented) objects, see e.g. \cite{CB,PR}. On the other hand, it is known that a complete lattice is algebraic if and only if it is locally finitely presented as a category, see \cite{porst}.

\begin{example}\label{ex:locally_fp}\
\begin{enumerate}
    \item Let $\mathcal{A}$ be the category of (left) modules over some ring $R$. Then every object in $\mathcal{A}$ can be expressed as the sum of its finitely-generated submodules, and thus is locally finitely generated.
    \item Let $\mathcal{C}$ be a small category and $\mathcal{A}$ an abelian category. Then a functor $M \in \Fun(\mathcal{C},\Cat)$ is locally finitely generated if and only if $M(c)$ is locally finitely generated for all $c \in \mathcal{C}$. Indeed, given objects $c_1,\ldots,c_n$ in $\mathcal{C}$ and finitely-generated subobjects $N_i \subob M(c_i)$, the smallest subfunctor $N \langle(c_1,N_1),\ldots,(c_n,N_n)\rangle \subob M$ which satisfies $N_i \subob N(c_i)$ for all $i$ will be finitely generated in $\Fun(\mathcal{C},\Cat)$. Then $M$ is the supremum of such objects over all finite subsets of objects in $\mathcal{C}$ and all choices of finitely-generated $N_i \subob M(c_i)$. In particular, pointwise finite-dimensional $\mathcal{C}$-persistence modules over a field $\field$ and presheaves taking values in locally finitely generated abelian categories are both examples of locally finitely generated objects. See also the introduction of \cite{BC-B}, where it is discussed that the category of $\mathcal{C}$-persistence modules over $\field$ is always locally finitely presented.
\end{enumerate}
\end{example}

It is well-known that every algebraic lattice is meet-continuous, and therefore that every bounded modular algebraic lattice is (AB5). In particular, it follows immediately from Corollary~\ref{cor:catComp_AB5_2} that if every object in $\mathcal{A}$ is locally finitely generated then $\mathcal{A}$ has exact bounded direct limits.

In the remainder of this section, we prove the following. We then discuss uniqueness properties of decompositions in Section~\ref{sec:directSumUnique}. Note that, by Example~\ref{ex:locally_fp}, this generalizes the results from \cite{IY} on (AB5$^*$)-modules and from \cite{BC-B} on persistence modules.

\begin{theorem}\label{thm:decomp_existence}
    Suppose $\mathcal{L}$ is weakly JHS and weakly algebraic. Then $\mathcal{L}$ admits an independent decomposition of into indecomposables. In particular, if $X$ is an object in $\mathcal{A}$ which is both locally finitely generated and weakly JHS, then $X$ can be expressed as a direct sum of indecomposable submodules.
\end{theorem}

We begin with the following definition, referring to \cite[Definition~13.1]{CLVW} for the module-theoretic version.

\begin{definition}\label{def:local}
    Let $\Delta \subseteq \mathcal{L}$ be an independent set. We say that $\Delta$ is a \textdef{local direct summand} of $\mathcal{L}$ (respectively of $\bigvee \Delta$) if for every finite subset $\emptyset \neq \Delta' \subseteq \Delta$, the supremum $\bigvee \Delta' (= \biguplus \Delta')$ is a direct summand of $\mathcal{L}$ (respectively of $\bigvee \Delta$).
\end{definition}

The following is clear from the definitions.

\begin{lemma}\label{lem:summand_of_summand} Let $x \in \mathcal{L}$ be a nonzero element.
\begin{enumerate}
    \item If $y$ is a direct summand of $x$, then $y$ is a direct summand of $\mathcal{L}$.
    \item If $\Delta$ is a local direct summand of $x$, then $\Delta$ is a local direct summand of $\mathcal{L}$.
\end{enumerate}
\end{lemma}

In the theory of modules, one has that the supremum of every local direct summand of $X$ is itself a direct summand of $X$ if and only if the set of direct summands of $X$ is closed under directed unions, see \cite[13.2]{CLVW}. In \cite[Theorem~2.3]{IY}, it is shown that (AB5$^*$) modules satisfy these conditions. The statement and proof of their result can be readily adapted to arbitrary weakly JHS lattices as follows.

\begin{lemma}\label{lem:local}
    Suppose $\mathcal{L}$ is weakly JHS.
    \begin{enumerate}
        \item Let $\Delta \subseteq \mathcal{L}$ be a chain. If every $y \in \Delta$ is a direct summand of $\mathcal{L}$, then $\bigvee \Delta$ is a direct summand of $\mathcal{L}$.
        \item Let $\Delta \subseteq \mathcal{L}$ be a local direct summand of $\mathcal{L}$. Then $\biguplus \Delta$ is a direct summand of $\mathcal{L}$. In particular, if $\bigvee \Delta = \hat{1}$, then $\hat{1} = \biguplus \Delta$ is an independent decomposition of $\mathcal{L}$.
    \end{enumerate}
\end{lemma}

\begin{proof}
    (1) We adapt the proof of \cite[Theorem~2.3]{IY}. If $\bigvee \Delta = \hat{1}$, there is nothing to show. Otherwise, denote $$\mathcal{F} := \left\{(\Gamma,z) \in 2^\Delta \times \mathcal{L} \ \middle| \ \hat{1} = \biguplus\{z,\bigvee\Gamma\}\right\}.$$
    We partial order $\mathcal{F}$ via the relation $(\Gamma,z) \preceq (\Gamma', z')$ if $\Gamma \subseteq \Gamma'$ and $z' \leq z$. Note in particular that $\mathcal{F}$ is nonempty since $y = \bigvee\{y\}$ is a direct summand of $\mathcal{L}$ for every $y \in \Delta$.
    
    Now let $\{(\Gamma_\be,z_\be)\}_{\be \in \Be}$ be a totally ordered subset of $\mathcal{F}$. Let $\Gamma = \bigcup_{\be \in \Be} \Gamma_\be$ and $z:= \bigwedge \{z_\be \mid \be \in \Be\}$. We will show that $(\Gamma,z) \in \mathcal{F}$, and so Zorn's lemma applies.
    
    First note that, since $(\bigvee \Gamma) \vee z_\be = \hat{1}$ for all $\be \in \Be$, the fact that $\mathcal{L}$ is (AB5$^*$) implies that $(\bigvee \Gamma) \vee z = \hat{1}$. In particular, the assumption that $\bigvee \Gamma \neq \hat{1}$ means that $z \neq \hat{0}$. Moreover, since $\mathcal{L}$ is (AB5), we have that
    $$\left(\bigvee\Gamma\right) \wedge z = \bigvee\left\{\Gamma_\be \wedge z\mid \be \in \Be\right\} = \hat{0}.$$
    We conclude that $(\Gamma,z) \in \mathcal{F}$, as claimed.
    
    Now let $(\Gamma,z) \in \mathcal{F}$ be maximal. To finish the proof, we must show that $\Gamma = \Delta$. Indeed, suppose this is not the case and let $y \in \Delta \setminus \Gamma$. We then have two cases to consider.
    
    First suppose that $\bigvee \Gamma \leq y$. Then $y = \bigvee (\Gamma \cup \{y\})$ and, using a straightforward adaptation of the argument in \cite[Lemma~2.2]{IY}, there exists $z' \leq z$ such that $\hat{1} = \biguplus\{y,z'\}$. This contradicts the maximality of $(\Gamma,z)$. Since $\Delta$ is a chain, the only possibility is that there exists $y' \in \Gamma$ such that $y \leq y'$. But then $\hat{1} = \left\{z, \bigvee (\Gamma \cup \{y\})\right\}$, again contradicting the maximality of $(\Gamma,z)$.
    
    (2) Under the assumption of (subobject) completeness, the proof of \cite[13.2]{CLVW} is valid in any abelian category (not necessarily a concrete one), and can be translated directly into lattice-theoretic terms. Thus (2) is an immediate consequence of (1). In particular, the proof of \cite{CLVW} makes use of the following fact: Suppose $Z$ is a direct summand of $Y$ and $Y$ is a direct summand of $X$, and write $X = Z \oplus W$. Then $Y = Z \oplus (Y \cap W)$. In lattice-theoretic terms, this still holds since $Y = Z \vee (Y \wedge W)$ by the modular law and $Z \wedge (Y \wedge W) = \hat{0}$ since $W$ and $Z$ are independent.
\end{proof}

\begin{lemma}\label{lem:countable}
    Suppose that $\mathcal{L}$ is weakly JHS and does not contain an indecomposable direct summand. Then for every direct summand $y$ of $\mathcal{L}$ there exists a countably infinite subset $\Delta_y \subseteq \mathcal{L}$ such that $y = \biguplus \Delta_y$. In particular, either $|\mathcal{L}| = 1$ or $\mathcal{L}$ is superdecomposable.
\end{lemma}

\begin{proof}
    Suppose $\mathcal{L}$ does not contain an indecomposable direct summand. We suppose without loss of generality that $y = \hat{1}$ (otherwise we pass to the sublattice $[\hat{0},y]$). Since $\hat{1}$ is not indecomposable, we can write $\hat{1} = \biguplus\{y_0, y_0'\}$ with both $y_0$ and $y_0'$ nonzero. Repeating this argument, for $i \in \mathbb{N}$ we iteratively write $y_i' = \biguplus \{y_{i+1},y_{i+1}'\}$ with $y_{i+1}$ and $y_{i+1}'$ both nonzero. Now denote $\Delta := \{y_i \mid i \in \mathbb{N}\}$. It is straightforward to show that $\Delta$ is weakly independent, and thus independent by Lemma~\ref{lem:indep}. Thus $\Delta$ is a local direct summand of $\mathcal{L}$. It then follows from Lemma~\ref{lem:local}(2) that there exists $z \in \mathcal{L}$ such that $\hat{1} = \biguplus(\Delta \cup \{z\})$. This concludes the proof.
\end{proof}

 If $X$ is an (AB5$^*$) module, one can use Zorn's Lemma and the results above to express $X$ as a direct sum of indecomposable modules, see \cite[13.3]{CLVW}. A key component of this proof is that every direct summand of $X$ must itself have an indecomposable direct summand. On the other hand, Example~\ref{ex:boolean} shows that there exist weakly JHS lattices which do not contain indecomposable elements. The following shows that this cannot happen when we restrict to those lattices which are algebraic.

\begin{lemma}\label{lem:indecSummand}
    Suppose that $|\mathcal{L}| > 1$ and that $\L$ is weakly JHS and weakly algebraic. Then $\L$ contains an indecomposable direct summand.
\end{lemma}

\begin{proof}
    Suppose for a contradiction that $\L$ contains no indecomposable direct summand. Thus we can write $\hat{1} = \biguplus \{y,z\}$ with $y \neq \hat{0} \neq z$. By the assumption that $\L$ is weakly algebraic, there exists a compact element $x \in \L^c$. Moreover, we can assume without loss of generality that $x \not \leq y$ since $y \wedge z = \hat{0}$. Now let $\mathcal{F}$ be the set of local direct summands $\emptyset \neq \Delta \subseteq \L$ of $\L$ for which $x \not\leq \bigvee \Delta$. Order $\mathcal{F}$ by containment, and note $\mathcal{F}$ is nonempty since $\{z\} \in \mathcal{F}$.
    
    Let $\{\Delta_\be\}_{\be \in \Be}$ be a chain in $\mathcal{F}$. Then
    $x \not\leq \bigvee_{\be \in \Be} \left\{\bigvee \Delta_\be\right\} = \bigvee\left(\bigcup_{\be \in \Be} \Delta_\be\right)$. Moreover, it is clear that $\bigcup_{\be \in \Be}\Delta_\be$ is weakly independent, and thus independent by Lemma~\ref{lem:indep}; that is, we have that $\bigcup_{\be \in \Be} \in \mathcal{F}$ and Zorn's lemma applies.
    
    Now let $\Delta$ be maximal in $\mathcal{F}$. By Lemma~\ref{lem:local}, this means there exists $\hat{0} \neq w \in \L$ such that $\hat{1} = \biguplus \left\{w, \bigvee \Delta\right\}$. By our assumption that $\hat{1}$ does not contain an indecomposable direct summand, Lemma~\ref{lem:countable} implies that there are countably many nonzero elements $w_i \leq w$ such that $w = \biguplus_{i \in \mathbb{N}} w_i$. Since $x$ is compact, there exists a finite subset $\mathcal{S} \subseteq \mathbb{N}$ such that $x \leq \biguplus \left(\left\{\bigvee\Delta\right\} \cup \{w_i \mid i \in \mathcal{S}\}\right)$. Then for $j \in \mathbb{N} \setminus \mathcal{S}$, the definition of independence implies that $x \not\leq \biguplus \left\{w_j,\bigvee \Delta\right\}$. We conclude that  $\Delta \cup \{w_j\} \in \mathcal{F}$, which contradicts the maximality of $\Delta$.
\end{proof}

We also need the following.

\begin{lemma}\label{lem:summand_loc_fg}
    Let $\L$ be a weakly algebraic bounded modular lattice, and let $y$ be a direct summand of $\L$. Then the sublattice $[\hat{0},y]$ is weakly algebraic.
\end{lemma}

\begin{proof}
    Write $\hat{1} = \biguplus \{y,z\}$. Since $\L$ is modular, there is a lattice isomorphism between the intervals $[\hat{0},y]$ and $[z,\hat{1}]$ (given by $x \mapsto x \vee z$). Thus it suffices to show that $[z,\hat{1}]$ is weakly algebraic.

    Let $x \in \L^c$. We claim that $x \vee z$ is compact in $[z,\hat{1}]$. Indeed, let $\Delta \subseteq [z,\hat{1}]$ be directed and suppose $x \vee z \leq \bigvee \Delta$. Since $x$ is compact in $\L$, there exists $w \in \Delta$ such that $x \leq w$. But $z \leq w$ by assumption, and so $x \vee z \leq w$. This proves the claim.

    Now since $\L$ is weakly algebraic, we have that $\hat{1} = \bigvee \L^c = \bigvee (\L^c \vee z)$, where $\L^c \vee z$ is the set of compact elements of $[z,\hat{1}]$ by the claim. We conclude that $[z,\hat{1}]$ is weakly algebraic.
\end{proof}

We are now prepared to prove Theorem~\ref{thm:decomp_existence}

\begin{proof}[Proof of Theorem~\ref{thm:decomp_existence}]
    We adapt the proof of \cite[13.3]{CLVW}. If $|\L| = 1$, then $\Delta = \emptyset$ is an independent decomposition into indecomposables. Thus suppose $|\L| > 1$. Let $\mathcal{F}$ be the set of local direct summands $\Delta$ of $\mathcal{L}$ such that each $y \in \Delta$ is indecomposable. We order $\mathcal{F}$ by containment and note that $\mathcal{F}$ is nonempty by Lemma~\ref{lem:indecSummand}. Moreover, as in the proof of Lemma~\ref{lem:indecSummand}, we have that the union of a chain in $\mathcal{F}$ also lies in $\mathcal{F}$, and so Zorn's lemma applies.
    
    Let $\Delta$ be a maximal element of $\mathcal{F}$, and suppose for a contradiction that $\bigvee \Delta \neq \hat{1}$. Then by Lemma~\ref{lem:local}(2), there exists a nonzero element $y \in \mathcal{L}$ such that $\hat{1} = \biguplus\{y, \bigvee \Delta\}$. But the sublattice $[\hat{0},y]$ is also weakly JHS and weakly algebraic by Lemma~\ref{lem:summand_loc_fg}. So, $y$ has some indecomposable direct summand $y'$, which is also an indecomposable direct summand of $\mathcal{L}$ by Lemma~\ref{lem:summand_of_summand}. It then follows that $\Delta \cup \{y'\} \in \mathcal{F}$, contradicting the maximality of $\Delta$.
\end{proof}

%%%%%%%%%%%%%%%%%%%%%%%%%%%%%%%%%%%%%%%%%%%%%

\subsection{Uniqueness of direct sum decompositions}\label{sec:directSumUnique}

In this section, we lay out a plan for future work addressing the uniqueness of independent decompositions into indecomposables. We begin by defining three notions of uniqueness, see e.g. \cite[Definition~12.4]{CLVW} for the module-theoretic versions.

\begin{definition}\label{def:complements}
    Let $\emptyset \neq \Delta \subseteq \mathcal{L}$ be an independent decomposition of $\mathcal{L}$ into indecomposables.
    \begin{enumerate}
        \item We say that $\Delta$ \textdef{complements direct summands} if for every direct summand $y$ of $\mathcal{L}$ there exists a subset $\Delta' \subseteq \Delta$ such that $\Delta' \cup \{y\}$ is an independent decomposition of $\mathcal{L}$.
        \item We say that $\Delta$ \textdef{complements maximal direct summands} if for every indecomposable direct summand $y$, there exists a direct summand $z \in \mathcal{L}$ and an indecomposable direct summand $w \in \Delta$ such that both $\{z,y\}$ and $\{z,w\}$ are independent decompositions of $\mathcal{L}$.
        \item If $\emptyset \neq \Gamma \subseteq \mathcal{L}$ is another independent decomposition of $\mathcal{L}$ into indecomposables, then we say that $\Delta$ and $\Gamma$ are \textdef{Krull--Remak--Schmidt--Azumaya equivalent} (or \textdef{KRSA equivalent} if there exists a bijection $\Phi:\Delta \rightarrow \Gamma$ such that $[\hat{0},y] \pers [\hat{0},\Phi(y)]$ for all $y \in \Delta$.
    \end{enumerate}
\end{definition}

We recall that in categories of modules, the Krull--Remak--Schmidt--Azuyama Theorem shows that if $X$ can be expressed as a direct sum of indecomposable modules with local endomorphism rings, then this decomposition complements maximal direct summands and is unique up to KRSA equivalence. We emphasize that there are indecomposable modules whose endomorphism rings are not local, and so there is not an obvious generalization of this statement to the language of complete bounded modular lattices.

We also note that requiring a decomposition to complement direct summands is indeed a stronger than asking it only to complement maximal direct summands, as shown in the seminal work \cite{CJ}. Thus it is an interesting question to determine which modules admit such decompositions. In many works, including the work \cite{IY} on (AB5$^*$) modules, this is done by considering the following properties.

\begin{definition}\label{def:exchange}\
    \begin{enumerate}
        \item We say that $\mathcal{L}$ has the \textdef{finite exchange property} if for any direct summand $y$ of $\mathcal{L}$ and for any independent decomposition $\emptyset \neq \Delta \subseteq \mathcal{L}$ such that $|\Delta| < \infty$, there exists a subset $\Delta' \subseteq \Delta$ and nonzero elements $w_z \leq z$ for every $z \in \Delta$ such that $\{y\} \cup \{w_z \mid z \in \Delta'\}$ is an independent decomposition of $\mathcal{L}$.
        \item We say that $\mathcal{L}$ has the \textdef{(full) exchange property} if for any direct summand $y$ of $\mathcal{L}$ and for any independent decomposition $\emptyset \neq \Delta \subseteq \mathcal{L}$, there exists a subset $\Delta' \subseteq \Delta$ and nonzero elements $w_z \leq z$ for every $z \in \Delta$ such that $\{y\} \cup \{w_z \mid z \in \Delta'\}$ is an independent decomposition of $\mathcal{L}$.
    \end{enumerate}
\end{definition}

It is unknown whether the finite exchange property implies the full exchange property in full generality, but this has been shown in many cases. For example, let $X$ be a (left) module over an arbitrary ring and suppose that $X = \bigoplus_{Y \in \Delta} Y$ is a decomposition of $X$ into a direct sum of indecomposables. If $M$ has the exchange property, it follows from \cite[12.14]{CLVW} and \cite[Theorem~2.25]{MM} that $X$ also has the full exchange property, that the endomorphism ring of each $Y \in \Delta$ is local, and that the decomposition complements direct summands. In light of these facts, we conclude with the following conjecture.

\begin{conjecture}\label{conj:decomp_unique} Suppose $\mathcal{L}$ is an (AB5) lattice.
    \begin{enumerate}
        \item If $\mathcal{L}$ admits an indecomposable decomposition into indecomposables which complements maximal direct summands, then all indecomposable decompositions of $\mathcal{L}$ are KRSA equivalent.
        \item Suppose further that $\mathcal{L}$ is weakly JHS. Then $\mathcal{L}$ has the full exchange property if and only if it has the finite exchange property. Moreover, if $\mathcal{L}$ has these properties, then every independent decomposition of $\mathcal{L}$ into indecomposables complements direct summands
    \end{enumerate}
\end{conjecture}

The module-theoretic analog of Conjecture~\ref{conj:decomp_unique} is proved in \cite{CLVW,MM} using the theory of ``local semi-T-nilpotency'', which lies beyond the scope of this paper. Nevertheless, we note that one can adopt this definition to the setting of weakly algebraic lattices by replacing ``elements of a module'' with ``compact elements of a lattice''. This yields a potential strategy for resolving Conjecture~\ref{conj:decomp_unique}.

\appendix\section{Assumptions on chains and directed sets}

Let $\mathcal{L}$ be a bounded lattice. In this appendix, we discuss the relationship between several classes of (upwards and downward) directed subsets of $\mathcal{L}$.
We begin with the following definition.

\begin{definition}\label{def:directed}
    Let $\L$ be an arbitrary lattice, and let $\emptyset \neq \Delta \subseteq \L$. We say that $\Delta$ is
    \begin{itemize}
        \item[(1)] \textdef{(upward) directed} if for all $x, y \in \Delta$ there exists $z \in \Delta$ such that $x \vee y \leq z$.
        \item[(1')] \textdef{downward directed} if for all $x, y \in \Delta$ there exists $z \in \Delta$ such that $z \leq x \wedge y$.
        \item[(3)] \textdef{noetherian} (respectively \textdef{artinian}) if any totally ordered subset of $\Delta$ contains a maximal (resp. minimal) element.
        \item[(4)] of \textdef{finite length} if it is both noetherian and artinian.
        \item[(5)] a \textdef{continuous well-ordered chain} if $\Delta = \{y_\alpha\}_{\alpha \in I}$ is well-ordered (equivalently an artinian chain) and for any limit ordinal $\lambda \in I$ one has $\bigvee_{\alpha < \lambda} \{y_\alpha\} = y_\lambda$.
    \end{itemize}
\end{definition}
Note in particular that any chain is both upward and downward directed. Moreover, we have that a subset $\Delta \subseteq \L$ is downward directed (respectively noetherian) if and only if it is an upward directed (respectively artinian) subset of $\L^{op}:= (\L, \leq^{op})$.

A proof of the following can be found as part of \cite[Lemma~1.2.14]{GT}.

\begin{lemma}\label{lem:ordinal_chain}
    Let $\L$ be an arbitrary lattice, let $\Delta \subseteq \L$ be an infinite upward directed set, and let $2^{\Delta}$ be the lattice of subsets of $\Delta$ (ordered by inclusion). Then there exists a continuous and well-ordered chain $\{\Delta_\alpha \mid \alpha \in I\} \subseteq 2^{\Delta}$ such that:
    \begin{enumerate}
        \item Each $\Delta_\alpha$ is an upward directed subset of $\L$.
        \item The cardinality of each $\Delta_\alpha$ is strictly smaller than that of $\Delta$.
        \item $\Delta = \bigcup_{\alpha \in I} \Delta_\alpha$.
    \end{enumerate}
\end{lemma}

As a consequence, we obtain the main results of this appendix.

\begin{samepage}

\begin{proposition}\label{prop:complete}
    Let $\L$ be an arbitrary bounded lattice. Then the following are equivalent.
    \begin{itemize}
        \item[(1)] $\L$ is complete.
        \item[(1')] $\L$ is cocomplete.
        \item[(2)] every upward directed set of $\L$ has a supremum.
        \item[(2')] every downward directed set of $\L$ has an infimum.
        \item[(3)] every chain in $\L$ has a supremum.
        \item[(3')] every chain in $\L$ has an infimum.
        \item[(4)] every well-ordered continuous chain in $\L$ has a supremum.
        \item[(4')] every well-ordered continuous chain in $\L^{op}$ has a supremum (with respect to $\leq^{op})$.
    \end{itemize}
\end{proposition}

\end{samepage}

\begin{proof}
    We have already seen that (1) is equivalent to (1'). Thus by duality it suffices to show that (1), (2), (3), and (4) are equivalent. The implications $(1) \implies (2) \implies (3) \implies (4)$ are clear.
    
    $(4 \implies 2)$: Let $\Delta\subseteq \L$ be (upward) directed. We proceed by induction on the cardinality $|\Delta|$. For the base case, we recall that, in any lattice, $\bigvee \Delta$ always exists when $|\Delta| < \infty$.
    
    Now suppose $|\Delta| = \omega$ for some infinite ordinal $\omega$, and suppose the result holds for all smaller ordinals. Choose some $\{\Delta_\alpha\}_{\alpha \in I}$ as in Lemma~\ref{lem:ordinal_chain}. By the induction hypothesis, we have that $\bigvee \Delta_\alpha$ exists for all $\alpha \in I$. We claim that $\left\{\bigvee \Delta_\alpha\right\}_{\alpha \in I}$ is a continuous well-ordered chain in $\L$. From this, it will follow that $$\bigvee \Delta = \bigvee\left(\bigcup_{\alpha \in I}\Delta_\alpha\right) = \bigvee_{\alpha \in I} \left\{\bigvee \Delta_\alpha\right\}$$
    exists by (4).
    
    We now prove the claim. We first recall that every subset of a well-ordered set is well-ordered, and in particular has a minimal element. Now, for $\alpha \in I$, denote $\Omega(\alpha) := \{\beta \in I \mid \bigvee \Delta_\alpha = \bigvee \Delta_\beta\}$, and denote $I_{\mathrm{min}} := \{\min \Omega(\alpha) \mid \alpha \in I\}$. Then there exists a well ordered set $J$ and an order-preserving bijection $J \rightarrow I_{\mathrm{min}}$, which we denote by $j \mapsto i_j$. For any limit ordinal $\lambda \in J$, we then have
    $$\bigvee_{j < \lambda}\left\{\bigvee \Delta_{i_j}\right\} = \bigvee_{\alpha < i_\lambda}\left\{\bigvee \Delta_\alpha\right\} = \bigvee\left(\bigcup_{\alpha < i_\lambda} \Delta_\alpha \right) = \bigvee \Delta_{i_\lambda}.$$
    This proves the claim.
    
    $(2 \implies 1)$: Let $\Delta \subseteq \L$ be arbitrary, and let
    $$\widetilde{\Delta} = \left\{\bigvee \Delta' \mid \Delta' \subseteq \Delta \text{ and } |\Delta'| < \infty\right\}.$$
    By construction, $\widetilde{\Delta}$ is a directed subset of $\L$, and so $\bigvee \widetilde{\Delta} = \bigvee \Delta$ exists by (2).
\end{proof}

As a categorical analog of Proposition~\ref{prop:complete}, we have the following.

\begin{lemma}[Lemma~\ref{lem:catComplete}]\label{lem:cat_complete_appendix}
    Let $X$ be an object in $\Cat$. Then the following are equivalent.
    \begin{enumerate}
        \item For every chain $\Delta \subseteq \Sub(X)$, the direct limit $\colim\Delta$ exists (in $\mathcal{A}$) and is a subobject of $X$.
        \item For every well-ordered chain $\Delta \subseteq \Sub(X)$, the direct limit $\colim \Delta$ exists (in $\Cat$) and is a subobject of $X$.
        \item $\mathcal{A}$ has exact $X$-bounded direct limits.
    \end{enumerate}
\end{lemma}

\begin{proof}
    The implications $(3)\implies(1)\implies(2)$ are clear, so we prove that $(2)\implies(3)$. Suppose (2) holds, and let $\Delta \subseteq \Sub(X)$ be (upward) directed. As in the proof of Proposition~\ref{prop:complete}, we proceed by induction on the cardinality $|\Delta|$. For the case case, we recall that $\colim\Delta = \sum\Delta = \bigvee \Delta$ whenever $|\Delta| < \infty$.
    
    Now suppose $|\Delta| = \omega$ for some infinite ordinal $\omega$, and suppose the result holds for all smaller ordinals. Choose some $\{\Delta_\alpha\}_{\alpha \in I}$ as in Lemma~\ref{lem:ordinal_chain}. By the induction hypothesis, we have that $\colim\Delta_\alpha$ exists and is a subobject of $X$ for all $\alpha \in I$. In particular, we have $\colim\Delta_\alpha = \bigvee \Delta_\alpha$ for each $\alpha$, and so $\{\colim\Delta_\alpha\}$ is a continuous well-ordered chain in $\Sub(X)$ by the same argument as in Proposition~\ref{prop:complete}. From this, it follows that
    $\colim\Delta = \colim_{\alpha \in I}(\colim\Delta_\alpha) $
    exists and is a subobject of $X$ by (2).
\end{proof}

Recall the notational conventions from Notation~\ref{nota:wedge_set}. To simplify the statement of our last result, for $z \in \mathcal{L}$ and $\Delta \subseteq \mathcal{L}$, we consider the following equations
\begin{eqnarray}
    z \wedge \left(\bigvee \Delta\right) &=& \bigvee (z \wedge \Delta) \label{eqn:ab5}\\
    z \vee \left(\bigwedge \Delta \right) &=& \bigwedge (z \vee \Delta)\label{eqn:ab5*}.
\end{eqnarray}

\begin{proposition}\label{prop:AB5}
    Let $\mathcal{L}$ be a complete bounded modular lattice.
    \begin{enumerate}
        \item The following are equivalent:
        \begin{enumerate}
            \item $\mathcal{L}$ is (AB5).
            \item Equation~\ref{eqn:ab5} holds for every well-ordered continuous chain $\Delta \subseteq \mathcal{L}$ and every $z \in \mathcal{L}$.
            \item Equation~\ref{eqn:ab5} holds for (upward) directed set $\Delta \subseteq \mathcal{L}$ and every $z \in \mathcal{L}$.
        \end{enumerate}
        \item The following are equivalent:
        \begin{enumerate}
            \item $\mathcal{L}$ is (AB5$^*$).
            \item Equation~\ref{eqn:ab5*} holds for every chain $\Delta \subseteq \mathcal{L}$ which is continuous and well-ordered with respect to $\leq^{op}$ and every $z \in \mathcal{L}$.
            \item Equation~\ref{eqn:ab5*} holds for downward directed set $\Delta \subseteq \mathcal{L}$ and every $z \in \mathcal{L}$.
        \end{enumerate}
    \end{enumerate}
\end{proposition}

\begin{proof}
   By duality it suffices to prove only (1). The implications $(1c) \implies (1a) \implies (1b)$ are clear.
    
    $(1b \implies 1c)$: Let $\Delta \subseteq \mathcal{L}$ be (upward) directed and let $z \in \mathcal{L}$. We proceed by induction on the cardinality $|\Delta|$. We first note that if $|\Delta| < \infty$, then $\Delta$ has a maximal elements and thus Equation~\ref{eqn:ab5} automatically holds.
    
    Now suppose $|\Delta| = \omega$ for some infinite ordinal $\omega$, and suppose the result holds for all smaller ordinals. Choose some $\{\Delta_\alpha\}_{\alpha \in I}$ as in Lemma~\ref{lem:ordinal_chain}. By the induction hypothesis, we have that $z \wedge \left(\bigvee \Delta_\alpha\right) = \bigvee(z \wedge \Delta_\alpha)$ for all $\alpha \in I$. Moreover, as in the proof of Proposition~\ref{prop:complete}, we have that $\{\bigvee \Delta_\alpha\}_{\alpha \in I}$ is a continuous well-ordered chain in $\mathcal{L}$. By the assumption of (1b) it therefore follows that
    \begin{eqnarray*}
        z \wedge \left(\bigvee \Delta\right) &=& z \wedge \left(\bigvee_{\alpha \in I} \left(\bigvee \Delta_\alpha\right)\right)\\
        &=& \bigvee_{\alpha \in I} z \wedge \left(\bigvee \Delta_\alpha\right)\\
        &=& \bigvee_{\alpha \in I} \left(\bigvee \left(z \wedge \Delta_\alpha\right)\right)\\
        &=& \bigvee (z \wedge \Delta).
    \end{eqnarray*}
    This proves the result.
\end{proof}

%%%%%%%%%%%%%%%%%%%%%%%%%%%%%

\section{Technical results on biproducts}

In this appendix, we prove Lemma~\ref{lem:inclusion_gluing}. Recall that, given a morphism $f: X \rightarrow Y$ in an abelian category, there is a functor $f^{-1}: \Sub(Y) \rightarrow \Sub(X)$ which sends $(Z,\iota_Z) \in \Sub(Y)$ to the pullback of the diagram $X \xrightarrow{f} Y \xleftarrow{\iota_Z} Z$, and that this functor distributes over intersections.

\begin{lemma}\label{lem:biproduct_1}
    Let $f: X \rightarrow Y$ be an isomorphism between objects of $\mathcal{A}$. Denote $\iota_X: X\rightarrow X \oplus Y$ and $\pi_X: X \oplus Y \rightarrow X$ the inclusion and projection maps, and likewise for $Y$. Let $d: X \rightarrow X \oplus Y$ be the unique map which satisfies $\pi_X \circ d = -1_X$ and $\pi_Y \circ d = f$ (in matrix form, this map is $d = [-1_X, f]^\top$). Then the following hold in the lattice $\Sub(X\oplus Y)$:
    \begin{enumerate}
        \item $\im(d) \cap \im(\iota_X) = 0 = \im(d) \cap \im(\iota_Y)$.
        \item $\im(d) + \im(\iota_X) = X\oplus Y = \im(d) + \im(\iota_Y)$.
    \end{enumerate}
\end{lemma}

\begin{proof}
    Note that $\im(d) = \im(d\circ f^{-1})$ and that $-d \circ f^{-1}$ satisfies $-\pi_X \circ d \circ f^{-1} = f^{-1}$ and $-\pi_Y \circ d \circ f^{-1} = -1_Y$. Thus it suffices to prove the first half of each statement. Moreover, recall that there is a functor $d^{-1}: \Sub(X\oplus Y) \rightarrow \Sub(X)$ which sends $Z \in \Sub(X\oplus Y)$ to the pullback of the diagram $X \xrightarrow{d} X\oplus Y \xleftarrow{\iota_Z} Z$.

    (1) Since $d$ is a monomorphism, it suffices to show that $d^{-1}(\im(d) \cap \im(\iota_X)) = 0 \in \Sub(X)$. Since $\im(\iota_X) = \ker(\pi_Y)$ and $d^{-1}(\im(d) \cap \im(\iota_X)) \subob d^{-1}(\im(\iota_X))$, it follows that
        $$d^{-1}(\im(\iota_X)) = d^{-1}(\ker(\pi_Y)) = \ker(\pi_Y \circ d) = \ker(f) = 0.$$
        This proves (1).

    (2) Note that 
        $\pi_X \circ (\iota_X + d) \circ f^{-1}  = f^{-1} - f^{-1}  = \pi_X \circ \iota_Y$ and 
        $\pi_Y \circ (\iota_X + d)\circ f^{-1}  = 0 + 1_Y  = \pi_Y \circ \iota_Y$. Thus by the universal property of the product, we have that $\iota_Y = (\iota_X + d) \circ f^{-1}$. This means that $\iota_Y$ factors through the inclusion of $\im(\iota_X) + \im(d)$. We conclude that $X\oplus Y = \im(\iota_X) + \im(\iota_Y) \subob \im(\iota_X) + \im(D)$ This proves (2).
\end{proof}

\begin{lemma}[Lemma~\ref{lem:inclusion_gluing}]\label{lem:inclusion_gluing_appendix}
    Let $X$ and $Y$ be objects in $\mathcal{A}$, and suppose that $X\oplus Y$ is (AB5). Let $\Delta = (X_\alpha)_{\alpha \in I} \subseteq \Sub(X)$ and $\Gamma = (Y_\alpha)_{\alpha \in I} \subseteq \Sub(Y)$ be directed diagrams of the same shape. For $\alpha \leq \beta \in I$, denote by $x_{\alpha,\beta}: X_\alpha \rightarrow X_\beta$, $y_{\alpha,\beta}: Y_\alpha \rightarrow Y_\beta$, $x_\alpha: X_\alpha \rightarrow X$, and $y_\beta: Y_\beta \rightarrow X$ the inclusion maps. Suppose that $\bigvee \Delta = X$ (in $\Sub(X)$) and $\bigvee \Gamma = Y$ (in $\Sub(Y)$) and that there is a system of isomorphisms $(f_\alpha: X_\alpha \rightarrow Y_\alpha)_{\alpha \in I}$ such that $f_\beta \circ x_{\alpha,\beta} = y_{\alpha,\beta} \circ f_\alpha$ for all $\alpha \leq \beta \in I$. Then there is an isomorphism $f: X\rightarrow Y$ which satisfies $(\star)$ $f \circ x_\alpha = y_\beta \circ f_\alpha$ for all $\alpha \in I$. Furthermore, $f$ is the unique morphism $X \rightarrow Y$ which satisfies $(\star)$.
\end{lemma}

\begin{proof}
    Let $\iota_X,\iota_Y, \pi_X$, and $\pi_Y$ be as in Lemma~\ref{lem:biproduct_1}. For $\alpha \in I$, let $d_\alpha: X_\alpha \rightarrow X \oplus Y$ be the unique map which satisfies $\pi_X \circ d_\alpha = -x_\alpha$ and $\pi_Y \circ d_\alpha = y_\alpha \circ f_\alpha$. Denote $D = \bigvee_\alpha \im(d_\alpha) \in \Sub(X\oplus Y)$, and let $p: X \oplus Y \rightarrow (X \oplus Y)/D$ be the quotient map. We claim that $p \circ \iota_X$ and $p \circ \iota_Y$ are isomorphisms.

    We first show that $p\circ \iota_X$ is a monomorphism. Since $\iota_X$ is a monomorphism and $\ker(p) = D$, it suffices to show that $\im(\iota_X) \cap D = 0$. Indeed:
        \begin{align*}
        \im(\iota_X) \cap D &= \im(\iota_X) \cap \left(\bigvee_{\alpha \in I} D_\alpha\right) &\\
        &= \bigvee_{\alpha \in I}\left[\im(\iota_X) \cap \im(d_\alpha)\right] &\text{ by (AB5)}\\
        &= \bigvee_{\alpha \in I} \left(\bigvee_{\beta \in I} \left[\im(\iota_X \circ x_\beta) \cap 
        \im(d_\alpha)\right]\right) &\\
        &= \bigvee_{\alpha \in I} \left[\im(\iota_X \circ x_\alpha) \cap \im(d_\alpha)\right] & \text{because $\{(\alpha,\alpha) \mid \alpha \in I\}$ is cofinal in $I\times I$}\\
        &= 0 & \text{by Lemma~\ref{lem:biproduct_1}.}\end{align*}

We conclude that $p \circ \iota_X$ is a monomorphism. The fact that $p \circ \iota_Y$ is a monomorphism then follows from replacing each $X$ with $Y$ and each $x$ with $y$ in the above argument.

We next show that $p \circ \iota_X$ is an epimorphism. Since $\ker(p) = D$, we note that $\im(p \circ \iota_X) = \im\left(p|_{\im(\iota_X) + D}\right)$. Moreover, for all $\alpha \in I$, Lemma~\ref{lem:biproduct_1} implies that $\im(\iota_X) + \im(\iota_Y) = \im(\iota_X) + \im(d_\alpha) \subob \im(\iota_X) + D$. Taking supremums, this implies that $X \oplus Y \subob \im(\iota_X) + D$, and so $\im(\iota_X) + D = X \oplus Y$. Thus $p \circ \iota_X$ is an epimorphism since $p$ is. The fact that $p \circ \iota_Y$ is an epimorphism then follows from interchanging the roles of $X$ and $Y$.

We have shown that $p \circ \iota_X$ and $p \circ \iota_Y$ are both monic and epic, and thus they are both isomorphisms since $\mathcal{A}$ is abelian. Now denote $f := -(p \circ \iota_Y)^{-1} \circ p \circ \iota_X: X \rightarrow Y$. Then for $\alpha \in I$, we have that 
$$p \circ \iota_Y \circ y_\alpha \circ f_\alpha = p \circ \iota_Y \circ \pi_Y \circ d_\alpha = p \circ \iota_X \circ \pi_X \circ d_\alpha = -p \circ \iota_X \circ x_\alpha.$$
Thus $f \circ x_\alpha = y_\alpha\circ f_\alpha$; that is, $f$ satisfies $(\star)$.

It remains to show that $f$ is the unique morphism which satisfies $(\star)$. Let $g: X \rightarrow Y$ satisfy $(\star)$. Then $X_\alpha \subob \ker(f-g)$ for all $\alpha \in I$. But this means $X = \ker(f-g)$, and so $f = g$.
\end{proof}

\end{document}